\def\Rset{\mathbb{R}}
\def\Cset{\mathbb{C}}
\def\Kset{\mathbb{K}}
\def\Nset{\mathbb{N}}
\def\fsep{\mathrm{sep}(f)}
\newtheorem{theorem}{Theorem}[section]
\newtheorem{lemma}[theorem]{Lemma}
\newtheorem{corollary}[theorem]{Corollary}
\newtheorem{proposition}[theorem]{Proposition}
\theoremstyle{definition}  
\newtheorem{definition}[theorem]{Definition}
\newtheorem{remark}[theorem]{Remark}
\newtheorem{example}[theorem]{Example}
\begin{document}

\begin{frontmatter}
\title{General convergence theorems for iterative processes and applications to the Weierstrass root-finding method}
\author{Petko~D.~Proinov}
\ead{proinov@uni-plovdiv.bg}
\address{Faculty of Mathematics and Informatics, University of Plovdiv, Plovdiv 4000, Bulgaria}

\begin{abstract}
In this paper, we prove some general convergence theorems for the Picard iteration in cone metric spaces over a solid vector space.
As an application, we provide a detailed convergence analysis of the Weierstrass iterative method for computing 
all zeros of a polynomial simultaneously. These results improve and generalize existing ones in the literature.
\end{abstract}

\begin{keyword}
iterative methods \sep  cone metric space \sep convergence analysis \sep error estimates 
\sep Weierstrass method \sep polynomial zeros
 
\MSC 65J15 \sep 54H25 \sep 65H04 \sep 12Y05
\end{keyword}

\numberwithin{equation}{section}    

\end{frontmatter}



\section{Introduction}
\label{sec:Introduction}

In the first part of the paper, we study the convergence of the iterative processes of the type
\begin{equation}\label{eq:Picard-iteration}
x_{n + 1}  = Tx_n ,  \quad n = 0,1,2,\ldots,
\end{equation}
where $T \colon D \subset X \to X$ is an iteration function in a cone metric space $(X,d)$ over a solid vector space ${(Y,\preceq)}$.
Cone metric spaces have a long history 
(see Collatz \cite{Col66}, 
Zabrejko \cite{Zab97}, 
Jankovi\'c, Kadelburg and Radenovi\'c \cite{JKR11}, 
Proinov \cite{Pro13} and references therein).
For an overview of the theory of cone metric spaces over a solid vector space, we refer the reader to
\cite{Pro13} and \cite[Section~2]{PN14}.

In the second part of the paper, we study the convergence of the famous Weierstrass method \cite{Wei91} for computing all zeros of a polynomial simultaneously. This method was introduced and studied for the first time by Weierstrass in 1891. 
In 1960--1966, the method was rediscovered by 
Durand \cite{Dur60} (in implicit form),
Dochev \cite{Doc62b},
Kerner \cite{Ker66}
and
Pre{\v s}i\'c \cite{Pre66}.
For this reason, it is also known as `Durand-Kerner method', `Weierstrass-Dochev method', etc.
For an overview of iterative methods for simultaneous finding of polynomial zeros, we refer the reader to
\cite{SAK94,Kyu98,McN07,Pet08}.
 
Throughout this paper, ${(\Kset,|\cdot|)}$ denotes an arbitrary normed (valued) field with absolute value ${|\cdot|}$,  
and $\Kset[z]$ denotes denotes the ring of polynomials in one variable $z$ over $\Kset$. 
Let $f \in \Kset[z]$ be a polynomial of degree $n \ge 2$. 
We consider the zeros of $f$ as a vector in $\Kset^n$.
More precisely, a vector ${\xi \in \Kset^n}$ is said to be a \emph{root-vector} of $f$ if
\begin{equation} \label{eq:root-vector}
f(z) = a_0 \prod _{i = 1} ^ n (z - \xi_i) \quad\text{for all } \, z \in \Kset,
\end{equation}
where ${a_0 \in \Kset}$. Obviously, $f$ has a root-vector in $\Kset^n$ if and only if $f$ splits in $\Kset$.
Recall that the Weierstrass method is defined by the following iteration
\begin{equation}  \label{eq:Weierstrass-iteration}
x^{k + 1}  = x^k - W(x^k ), \qquad k = 0,1,2,\ldots,
\end{equation}
where ${W \colon \mathcal{D} \subset \Kset^n \to \Kset^n}$ is defined by
${W(x) = (W_1(x),\ldots,W_n(x))}$ with
\begin{equation} \label{eq:Weierstrass-correction}
W_i(x) = \frac{f(x_i)}{a_0 \displaystyle\prod_{j \ne i} (x_i  - x_j)}
\qquad (i = 1,\ldots,n),
\end{equation}
where $a_0$ is the leading coefficient of $f$ and $\mathcal{D}$ is the set of all vectors in $\Kset^n$ with distinct components.
The operator $W$ is called the \emph{Weierstrass correction}.
Sometimes we write $W_f$ instead of $W$ to indicate that the operator $W$ is generated  by $f$.
It is easy to see that the Weierstrass correction $W_f$ is invariant with respect to multiplication of $f$
by a non-zero constant $c \in \Kset$.
Obviously, the Weierstrass iteration \eqref{eq:Weierstrass-iteration} can be represented in the form \eqref{eq:Picard-iteration} 
with the  iteration function ${T \colon \mathcal{D} \subset \Kset^n \to \Kset^n}$ defined by
\begin{equation} \label{Weierstrass-iteration-function}
	T(x) = x - W(x).
\end{equation}

The aim of this paper is twofold.
First, we present some general convergence theorems with error estimates for the Picard iteration 
\eqref{eq:Picard-iteration}. These results extend some of the results in \cite{Pro09,Pro10}.
Second, using these results we provide a detailed convergence analysis of the Weierstrass method \eqref{eq:Picard-iteration}.
The new results for the Weierstrass method improve the corresponding results of 
\cite{Bat98,Doc62b,Pre80,Zhe82,KM83,Zhe87,WZ91,ZW93,WZ95,PCT95,Pet96,PHI98,Til98,Yak02,Han00,PH01,Pro06b,PP13}. 

The paper is structured as follows:

In Section~\ref{sec:Preliminaries}, we present some preliminaries results and notations that will be useful in the sequel.

In Section~\ref{sec:Local-convergence-theorems-in-cone-metric-spaces}, we establish two general convergence theorems with error estimates for iterated contractions at a point in cone metric spaces. 
The first one extends Theorem~3.6 of \cite{Pro09}.

In Section~\ref{sec:General-convergence-theorems-in-cone-metric-spaces}, we establish two general semilocal convergence theorems with error estimates for iterative processes of the type \eqref{eq:Picard-iteration}.
These results extend Theorems 5.4 and 5.6 of \cite{Pro10}.
As a consequence we obtain a convergence theorem with error estimates for iterated contractions in cone metric spaces, 
which extends Theorem~6.5 of \cite{Pro10}. 
All results in this sections are generalizations of the Banach Contraction Principle \cite{Ban22} as well as of the Iterated Contraction Principle given in \cite[Chap.~12]{OR70} and \cite{Pro13}.

In Section~\ref{sec:Some-inequalities}, we present some inequalities in $\Kset^n$ and notations which will be useful in the next sections.

In Section~\ref{sec:Local-convergence-of-the-Weierstrass-method-I}, we obtain a local convergence theorem with error estimates for the Weierstrass method which improves the results of
Dochev \cite{Doc62b},
Kyurkchiev and Markov \cite{KM83}, 
Yakoubsohn \cite{Yak02} and
Proinov and Petkova \cite{PP13}.

In Section~\ref{sec:Local-convergence-of-the-Weierstrass-method-II}, we obtain another local convergence theorem 
with error estimates for the Weierstrass method which improves and generalizes the results of
Wang and Zhao \cite{WZ91}, 
Tilli \cite{Til98} and 
Han \cite{Han00}.

In Section~\ref{sec:Semilocal-convergence-of-the-Weierstrass-method}, we prove a new convergence theorem for 
the Weierstrass method under computationally verifiable initial conditions.
The main result of this section generalizes, improves and complements all previous results in this area, which are due to 
Pre{\v s}i\'c \cite{Pre80},
Zheng \cite{Zhe82,Zhe87},
Wang and Zhao \cite{ZW93,WZ95},
Petkovi\'c, Carstensen and Trajkovi\'c \cite{PCT95},
Petkovi\'c \cite{Pet96},
Petkovi\'c, Herceg and Ili\'c \cite{PHI98},
Batra \cite{Bat98},
Han \cite{Han00},
Petkovi\'c and Herceg \cite{PH01} and
Proinov \cite{Pro06b}.
The new result in this section also gives computationally verifiable error estimates,
a localization formula for the Weierstrass iteration \eqref{eq:Weierstrass-iteration} as well as 
a sufficient condition for a polynomial ${f \in \Kset[z]}$ of degree ${n \ge 2}$ to have $n$ simple zeros in the field $\Kset$. 
Finally, we provide an example which shows the exactness of the error estimates of our semilocal theorem for the Weierstrass iterative method.


\section{Preliminaries}
\label{sec:Preliminaries}

Throughout the paper, $J$ denotes an interval in ${\Rset}_+$ containing $0$, that is, an interval of the form
${[0, R]}$, ${[0, R)}$ or ${[0, \infty)}$, where ${R > 0}$ . We use the abbreviation $\varphi^n$ for the $n$th iterate of a function 
$\varphi \colon J \to J$.
For $n \in \Nset$ we denote by $S_n (t)$ the following polynomial
\begin{equation} \label{eq:sum-of-geometric-progression}
S_n (t) = \sum_{k = 0}^{n - 1} {t^k} .
\end{equation}
If the case ${n = 0}$, we set ${S_0 (t) \equiv 0}$.
Throughout the paper, we assume by definition that $0^0 = 1$.

\begin{definition}[\cite{Pro10}] \label{def:quasi-homogeneous-function}
A function ${\varphi \colon J \to \Rset_+}$ is called \emph{quasi-homogeneous} of degree ${r \ge 0}$ on $J$ if it satisfies the following condition
\begin{equation} \label{eq:quasi-homogeneous-function-1}
\varphi(\lambda t) \le \lambda^r \varphi(t)
\quad\text{for all } \lambda  \in [0,1] \text{ and } t  \in J.
\end{equation}
\end{definition}

It is easy to prove that a function ${\varphi \colon J \to \Rset_+}$ is quasi-homogeneous of degree $r \ge 0$ on $J$ if and only if there exists a nondecreasing function ${\Phi \colon J \to \Rset_+}$ such that ${\varphi(t) = t^r \, \Phi(t)}$ for all ${t \in J}$. 

Let us give an example for quasi-homogeneous functions, which we will use in 
Sections \ref{sec:Local-convergence-of-the-Weierstrass-method-I} and \ref{sec:Local-convergence-of-the-Weierstrass-method-II}.

\begin{example} \label{exmp:quasi-homogeneous-function-1}
Let ${n \in \Nset}$ and $\varphi$ be a quasi-homogeneous function of degree ${r > 0}$ on an interval $J$.
Then the function $\Phi$ defined by
\[
\Phi(t) = (1 + \varphi(t))^n - 1 ,
\]
is also quasi-homogeneous of degree $r$ on $J$.
\end{example}

\begin{proof}
It follows from the identity
\(
\Phi(t) = \sum_{k=1}^n {\binom{n}{k} \varphi(t)^k}
\)
because the sum of quasi-homogeneous functions of degree $r$ on $J$ is quasi-homogeneous of degree $r$ on $J$.
\end{proof}

\begin{definition}[\cite{Pro07}] \label{def:gauge-function-of-high-order}
A function $\varphi \colon J \to J$ is called a
\emph{gauge function of order} $r \ge 1$ on an interval $J$ if it is quasi-homogeneous of degree $r$ on $J$ and
\begin{equation} \label{eq:gauge-function-of-high-order-second-condition}
\varphi(t) \le t \quad\text{for all } t \in J.
\end{equation}
A gauge function $\varphi$ of order $r$ on $J$ is said to be a \emph{strict gauge function} if
the inequality in \eqref{eq:gauge-function-of-high-order-second-condition} holds strictly whenever ${t > 0}$. 
\end{definition}

\begin{proposition}[\cite{Pro10}]  \label{prop:gauge-function-sufficient-condition}
If ${\varphi \colon J \to \Rset_+}$ is quasi-homogeneous of degree ${r \ge 1}$ on an interval $J$ and
${R > 0}$ is fixed point of $\varphi$ in $J$, then  $\varphi$ is a gauge function of order $r$ on $[0,R]$. Moreover, if ${r > 1}$, then
$\varphi$ is a strict gauge function of order
$r$ on ${[0,R)}$.
\end{proposition}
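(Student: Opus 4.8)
The plan is to verify directly the three requirements in the definition of a gauge function of order $r$ for the restriction of $\varphi$ to $[0,R]$: that it is quasi-homogeneous of degree $r$ on $[0,R]$, that it satisfies $\varphi(t) \le t$ there, and that it maps $[0,R]$ into $[0,R]$. Quasi-homogeneity is the easy part: since $[0,R] \subseteq J$, the defining inequality \eqref{eq:quasi-homogeneous-function-1} continues to hold for all $\lambda \in [0,1]$ and all $t \in [0,R]$, so quasi-homogeneity of degree $r$ is simply inherited by every subinterval of $J$ containing $0$.

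The heart of the argument is the bound $\varphi(t) \le t$, and the key idea is to use the fixed point $R$ as a reference point in the quasi-homogeneity inequality. For $t \in (0,R]$ I would set $\lambda = t/R$, which lies in $(0,1]$, and write $t = \lambda R$. Applying \eqref{eq:quasi-homogeneous-function-1} with this $\lambda$ and exploiting $\varphi(R) = R$ gives
\[
\varphi(t) = \varphi(\lambda R) \le \lambda^r \varphi(R) = \lambda^r R = t\,(t/R)^{r-1}.
\]
Since $0 < t \le R$ and $r - 1 \ge 0$, we have $(t/R)^{r-1} \le 1$, whence $\varphi(t) \le t$. The boundary case $t = 0$ is handled separately by taking $\lambda = 0$ in \eqref{eq:quasi-homogeneous-function-1}: because $r \ge 1$ forces $0^r = 0$, we obtain $\varphi(0) \le 0$, and combined with $\varphi(0) \ge 0$ this yields $\varphi(0) = 0 \le 0$. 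This also settles the self-map property, since $0 \le \varphi(t) \le t \le R$ forces $\varphi(t) \in [0,R]$.

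For the strict statement when $r > 1$, I would revisit the displayed chain on the half-open interval. For $t \in (0,R)$ one has the strict inequality $t/R < 1$, and since now $r - 1 > 0$ the factor $(t/R)^{r-1}$ is strictly less than $1$; hence $\varphi(t) \le t\,(t/R)^{r-1} < t$. Together with $\varphi(0) = 0$ this is exactly the strict gauge condition on $[0,R)$, and the same estimate confirms that $\varphi$ maps $[0,R)$ into itself.

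I do not anticipate a genuine obstacle: the proof reduces to a one-line computation once the substitution $\lambda = t/R$ is chosen. The only points demanding care are the separate treatment of $t = 0$, where the hypothesis $r \ge 1$ guarantees $0^r = 0$, and the bookkeeping of when the exponent $r - 1$ yields a strict as opposed to a non-strict inequality — which is precisely what separates the two assertions of the proposition.
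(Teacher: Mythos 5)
Your proof is correct, and since the paper only quotes this proposition from \cite{Pro10} without reproducing its proof, there is nothing to diverge from: your substitution $\lambda = t/R$ combined with $\varphi(R) = R$, giving $\varphi(t) \le t\,(t/R)^{r-1}$, is exactly the standard argument behind the cited result (equivalently phrased there via the representation $\varphi(t) = t^r\,\Phi(t)$ with $\Phi$ nondecreasing, which yields the same inequality). Your separate treatment of $t = 0$ and the strictness bookkeeping for $r > 1$ on $[0,R)$ are both handled properly, so the argument is complete.
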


\begin{definition}[\cite{Pro09}]  \label{def:function-of-initial-condition}
Let ${T \colon D \subset X \to X}$ be a map of a set $X$. A function $E \colon D \to {\Rset}_+ $ is said to be \emph{function of initial conditions} of $T$ (with a gauge function $\varphi $ on $J$) if there exists a function $\varphi \colon J \to J$ such that
\[ 
E(Tx) \le \varphi(E(x)) \, \text{ for all } x \in D \text{ with } Tx \in D \text{ and } E(x) \in J.
\] 
\end{definition}

\begin{definition}[\cite{Pro09}] \label{def:initial-point-E}
Let $T \colon D \subset X \to X$ be a map of a set $X$ and
$E \colon D \to {\Rset}_+ $ be a function of initial conditions of $T$ with a gauge function on $J$. 
Then a point $x \in D$ is said to be an \emph{initial point} of $T$ if
$E(x) \in J$ and ${T^n x \in D}$ for all ${n \ge 0}$. 
\end{definition}

\begin{proposition}[\cite{Pro10}] \label{prop:initial-point-test}
Let $T \colon D \subset X \to X$ be a map of a set $X$ and
$E \colon D \to {\Rset}_+$ be a function of initial conditions of  $T$ with a gauge function $\varphi$ on $J$. Suppose
\begin{equation} \label{eq:initial-point-test}
x \in D \text{ with } E(x) \in J \text{ implies } Tx \in D.	
\end{equation}
Then every point $x_0 \in D$ such that $E(x_0) \in J$ is an initial point of $T$.
\end{proposition}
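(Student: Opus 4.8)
The plan is to argue by induction on $n$, proving the slightly stronger statement that both $T^n x_0 \in D$ and $E(T^n x_0) \in J$ hold for every $n \ge 0$; the definition of initial point (Definition~\ref{def:initial-point-E}) then follows at once, since it requires exactly $E(x_0) \in J$ together with $T^n x_0 \in D$ for all $n \ge 0$. The base case $n = 0$ is nothing but the two hypotheses $x_0 \in D$ and $E(x_0) \in J$.

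For the inductive step, suppose $T^n x_0 \in D$ and $E(T^n x_0) \in J$. Applying the implication \eqref{eq:initial-point-test} to the point $x = T^n x_0$ immediately gives $T^{n+1} x_0 = T(T^n x_0) \in D$, which settles the first of the two claims. It therefore remains only to verify that $E(T^{n+1} x_0) \in J$.

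To obtain this membership I would invoke the defining property of $E$ as a function of initial conditions. Since $T^n x_0 \in D$, $T^{n+1} x_0 \in D$, and $E(T^n x_0) \in J$, Definition~\ref{def:function-of-initial-condition} yields the inequality $E(T^{n+1} x_0) \le \varphi(E(T^n x_0))$. Because $\varphi \colon J \to J$, the right-hand side already lies in $J$. The key observation---and the only place where the precise shape of $J$ enters---is that $J$ is an interval of the form $[0,R]$, $[0,R)$ or $[0,\infty)$, hence a down-closed subset of $\Rset_+$: whenever $s \in J$ and $0 \le t \le s$, one has $t \in J$. Applying this with $s = \varphi(E(T^n x_0)) \in J$ and $t = E(T^{n+1} x_0) \ge 0$ gives $E(T^{n+1} x_0) \in J$, which closes the induction.

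I do not anticipate any genuine difficulty, as the statement is essentially a bookkeeping argument combining the two structural hypotheses. The one step deserving care is the last: the inequality $E(T^{n+1} x_0) \le \varphi(E(T^n x_0))$ does \emph{not} by itself place $E(T^{n+1} x_0)$ in $J$, and one must explicitly use that $J$ is an initial segment of $\Rset_+$ containing $0$ to transport membership across the inequality. (Equivalently, one could first note $\varphi(E(T^n x_0)) \le E(T^n x_0)$ from the gauge condition \eqref{eq:gauge-function-of-high-order-second-condition} and apply down-closedness with $s = E(T^n x_0)$; either route works.)
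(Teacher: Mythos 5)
Your proof is correct: the induction on $n$ establishing simultaneously $T^n x_0 \in D$ and $E(T^n x_0) \in J$ is the natural (and essentially the only) argument, and the paper itself states this proposition without proof, citing \cite{Pro10}, where the same induction is used. You are also right to flag the one delicate point -- that $E(T^{n+1}x_0) \le \varphi(E(T^n x_0))$ transfers membership in $J$ only because $J$ is a down-closed interval containing $0$ -- and your primary route via $\varphi \colon J \to J$ is preferable to the parenthetical alternative, since Definition~\ref{def:function-of-initial-condition} guarantees $\varphi(J) \subset J$ but not by itself the inequality $\varphi(t) \le t$.
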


\begin{proposition}[\cite{Pro09}] \label{prop:initial-point-properties}
Let $T \colon D \subset X \to X$ be a map of a set $X$ and $E \colon D \to {\Rset}_+ $ be a function of initial conditions of $T$ with a
gauge function $\varphi$ on an interval $J$.
If $x \in D$ is an initial point of $T$, then every iterate $x_n =T^n x$ $(n = 0,1,2,\ldots)$ is an initial point of $T$.
Besides, if $\varphi$ is a gauge function of order $r \ge 1$, then for all ${n \ge 0}$,
\[
E(x_{n+1}) \le  \lambda^{r^n} E(x_n) \quad\text{and}\quad E(x_n) \le E(x_0) \, {\lambda}^{S_n (r)},
\]
where $\lambda=\phi(E(x_0))$ and ${\phi \colon J \to [0,1]}$ is a nondecreasing function such that 
\begin{equation} \label{eq:standard-condition-phi}
\varphi(t) = t \, \phi(t)
\quad\text{for all } \, t \in J.
\end{equation}
\end{proposition}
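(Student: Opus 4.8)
The plan is to treat the two assertions separately: first the invariance, under iteration, of the property of being an initial point, and then the two error estimates. For the first assertion, fix an initial point $x \in D$ and write $x_m = T^m x$. Since $x$ is an initial point, $T^n x \in D$ for every $n \ge 0$, and hence $T^n x_m = T^{\,n+m} x \in D$ for every $n \ge 0$; thus the membership condition in Definition~\ref{def:initial-point-E} holds for each $x_m$, and only $E(x_m) \in J$ remains to be checked. I would prove this by induction on $m$. The base case is the hypothesis $E(x_0) = E(x) \in J$. For the step, if $E(x_m) \in J$ then $x_m \in D$ and $x_{m+1} = Tx_m \in D$, so the defining inequality in Definition~\ref{def:function-of-initial-condition} gives $E(x_{m+1}) \le \varphi(E(x_m))$. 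Since $\varphi$ maps $J$ into $J$, the right-hand side lies in $J$, and because $J$ is an interval of the form $[0,R]$, $[0,R)$ or $[0,\infty)$ it is downward closed; therefore $0 \le E(x_{m+1}) \le \varphi(E(x_m)) \in J$ forces $E(x_{m+1}) \in J$, closing the induction.

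Put $E_n = E(x_n)$. Before the estimates I would record two preliminary facts. Since a gauge function satisfies $\varphi(t) \le t$, we get $E_{n+1} \le \varphi(E_n) \le E_n$, so $(E_n)$ is nonincreasing and $E_n \le E_0$ for all $n$. Moreover, from the quasi-homogeneity $\varphi(\theta t) \le \theta^r \varphi(t)$ and the factorization $\varphi(t) = t\,\phi(t)$ of \eqref{eq:standard-condition-phi}, dividing by $\theta t$ gives the scaling inequality $\phi(\theta t) \le \theta^{\,r-1}\phi(t)$ for $t \in J$ with $t > 0$ and $\theta \in (0,1]$; this is precisely where the hypothesis $r \ge 1$ enters, through $r - 1 \ge 0$.

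With these in hand I would prove the estimate $E_n \le E_0\,\lambda^{S_n(r)}$ by induction on $n$, using the recursion $S_{n+1}(r) = 1 + r\,S_n(r)$. The case $E_0 = 0$ is trivial, since then $E_n = 0$ for all $n$ by monotonicity; so assume $E_0 > 0$ and put $\mu = E_n/E_0 \in [0,1]$, with $\mu \le \lambda^{S_n(r)}$ by the inductive hypothesis. Quasi-homogeneity then yields
\[
E_{n+1} \le \varphi(E_n) = \varphi(\mu E_0) \le \mu^r\,\varphi(E_0) = \mu^r \lambda\, E_0 \le \lambda^{r S_n(r)}\,\lambda\, E_0 = E_0\,\lambda^{S_{n+1}(r)},
\]
which advances the induction. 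For the first estimate I would write once more $E_{n+1} \le \varphi(E_n) = E_n\,\phi(E_n)$, reducing it to the claim $\phi(E_n) \le \lambda^{r^n}$ whenever $E_n > 0$; applying the scaling inequality with $\theta = \mu$ and $t = E_0$ gives $\phi(E_n) \le \mu^{\,r-1}\phi(E_0) = \mu^{\,r-1}\lambda \le \lambda^{(r-1)S_n(r)+1}$, and the telescoping identity $(r-1)S_n(r) = r^n - 1$ turns this into $\phi(E_n) \le \lambda^{r^n}$, as required.

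The substantive content is entirely in the two inductions; the one point that needs care is the bookkeeping with the normalized quantity $\mu = E_n/E_0$, which must be kept in $[0,1]$ (supplied by the monotonicity of $(E_n)$) so that quasi-homogeneity and the monotonicity of $\theta \mapsto \theta^{r}$ and $\theta \mapsto \theta^{r-1}$ may be applied, together with separating off the degenerate cases in which $E_0$ or some $E_n$ vanishes, where the divisions are illegitimate but both estimates reduce to $0 \le 0$.
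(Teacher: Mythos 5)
Your proof is correct. Note that the paper itself states this proposition without proof, importing it from \cite{Pro09}; the closest internal point of comparison is the paper's proof of Proposition~\ref{prop:convergence-function-properties}, which derives the same two estimates for the control function $\beta$ by combining its monotonicity with its quasi-homogeneity of degree $r-1$, via the chain $\beta(E(x_n)) \le \beta\bigl(E(x_0)\,\lambda^{S_n(r)}\bigr) \le \lambda^{(r-1)S_n(r)}\,\beta(E(x_0))$ in \eqref{eq:property-beta} --- and that chain presupposes exactly the two estimates you are proving here. Your argument is the same in substance but organized slightly differently: by normalizing with $\mu = E_n/E_0$ and applying quasi-homogeneity directly at scale $\mu$ (your scaling inequality $\phi(\theta t) \le \theta^{r-1}\phi(t)$), you bypass the monotonicity of $\phi$ altogether, using only the monotonicity of $\theta \mapsto \theta^{r}$ and $\theta \mapsto \theta^{r-1}$ on $[0,1]$; the price is the explicit bookkeeping of the degenerate cases $E_0 = 0$ and $E_n = 0$, which you handle correctly via the monotonicity $E_{n+1} \le \varphi(E_n) \le E_n$. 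The remaining ingredients all check out: the first claim follows, as you say, from $\varphi(J) \subset J$ together with the downward closedness of $J$ (an interval of the form $[0,R]$, $[0,R)$ or $[0,\infty)$), and the identities $S_{n+1}(r) = 1 + r\,S_n(r)$ and $(r-1)\,S_n(r) = r^n - 1$ close both inductions.
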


The following definition extends Definition~3.4 of \cite{Pro10}. For an ordered vector space ${(Y,\preceq)}$ we denote by $Y_+$ the positive cone of $Y$, that is,
\[
Y_+ = \{x \in Y : x \succeq 0 \}.
\]

\begin{definition} \label{def:convergence-function-CMS}
Let $(X,d)$ be a cone metric space over an ordered vector space ${(Y,\preceq)}$.
Suppose ${T \colon D \subset X \to X}$ is an operator and ${E \colon D \to \Rset_+}$ is a function of initial conditions of $T$
with gauge function $\varphi$ on an interval $J$.
Then a function $F \colon D \to Y_+$ is said to be a \emph{convergence function} of $T$ (with respect to $E$ and with control functions
$\beta$ and $\gamma$) if there exist two nondecreasing functions
$\beta \colon J \to [0, 1)$ and $\gamma \colon J \to \Rset_+$ such that
\begin{equation} \label{eq:convergence-function-CMS-condition-1}
F(Tx) \preceq \beta(E(x)) \, F(x) \, \text{ for all } \, x \in D \text{ with } Tx \in D \text{ and } E(x) \in J,
\end{equation}
\begin{equation} \label{eq:convergence-function-CMS-condition-2}
d(x, Tx) \preceq \gamma(E(x)) \, F(x) \, \text{ for all } \, x \in D \text{ with } E(x) \in J.
\end{equation}
\end{definition}

Usually we need stronger conditions for the control function $\beta$.
Namely, we assume that ${\beta \colon J \to [0,1)}$ is a nondecreasing function satisfying the following two conditions:
\begin{equation}  \label{eq:standard-condition-beta-1}
t \, \beta(t) \text{ is a strict gauge function of order } r \text{ on } J,
\end{equation}
\begin{equation}  \label{eq:standard-condition-beta-2}
\forall \, t \in J:  \phi(t) = 0 \text{ implies } \beta(t) = 0 ,
\end{equation}
where ${\phi \colon J \to [0,1]}$ is a nondecreasing function satisfying  \eqref{eq:standard-condition-phi}.

It is easy to see  that condition \eqref{eq:standard-condition-beta-2} is equivalent to the existence of a function
${\psi \colon J \to \Rset_+}$ such that
\begin{equation} \label{eq:standard-condition-psi} 
	\beta(t) = \phi(t) \, \psi(t) \quad\text{for all } \,\, t \in J.
\end{equation}

The following proposition extends Lemmas 3.5 and 3.8 of \cite{Pro10}. 

\begin{proposition} \label{prop:convergence-function-properties}
Let $T \colon D \subset X \to X$ be an operator of a cone metric space $(X,d)$ over an ordered vector space ${(Y,\preceq)}$,
$E \colon D \to {\Rset}_+$ be a function of initial conditions of $T$ with gauge function $\varphi$ on $J$ satisfying 
\eqref{eq:gauge-function-of-high-order-second-condition},
and let $F \colon D \to Y_+$ be a convergence function of $T$ with control functions $\beta$ and $\gamma$.
If $x \in D$ is an initial point of $T$, then for all $n \ge 0$,
\begin{equation} \label{eq:convergence-function-property-1}
F(x_{n+1}) \preceq h \, F(x_n)
\quad\text{and}\quad
F(x_n) \preceq h^n F(x_0),
\end{equation}
where $x_n = T^n x$ and $h = \beta(E(x_0))$. Besides, if $\varphi$ is a gauge function of order $r \ge 1$ on $J$ and $\beta$ satisfies 
\eqref{eq:standard-condition-beta-1} and \eqref{eq:standard-condition-beta-2}, then for all $n \ge 0$,
\begin{equation} \label{eq:convergence-function-property-2}
F(x_{n+1}) \preceq \theta \lambda^{r^n} F(x_n)
\quad\text{and}\quad
F(x_n) \preceq \theta^n \lambda^{S_n(r)} F(x_0),
\end{equation}
where $\lambda = \phi(E(x_0))$, $\theta = \psi(E(x_0))$ and
$\psi \colon J \to {\Rset}_+$ is a function satisfying \eqref{eq:standard-condition-psi}.
\end{proposition}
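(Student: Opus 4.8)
The plan is to reduce both conclusions to scalar estimates on the quantity $\beta(E(x_n))$ and then transfer these to the cone $Y_+$. The one structural fact I use repeatedly is the one-step inequality $F(x_{n+1}) = F(Tx_n) \preceq \beta(E(x_n))\,F(x_n)$, which is exactly \eqref{eq:convergence-function-CMS-condition-1} applied at the point $x_n$; this is legitimate because Proposition~\ref{prop:initial-point-properties} guarantees that every iterate $x_n = T^n x$ is again an initial point, so $x_n \in D$, $x_{n+1} \in D$ and $E(x_n) \in J$. Since $F(x_n) \in Y_+$, i.e.\ $F(x_n) \succeq 0$, the one-step inequality shows that any scalar bound $\beta(E(x_n)) \le c$ upgrades to $F(x_{n+1}) \preceq c\,F(x_n)$ by compatibility of the order with multiplication by nonnegative scalars and transitivity of $\preceq$. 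The second (product) estimate in each pair then follows from the first by a routine induction, again using that $y \mapsto c\,y$ is order-preserving on $Y_+$ for $c \ge 0$.

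For the first pair \eqref{eq:convergence-function-property-1} I would first observe that the sequence $E(x_n)$ is nonincreasing: since $\varphi$ satisfies \eqref{eq:gauge-function-of-high-order-second-condition}, we get $E(x_{n+1}) \le \varphi(E(x_n)) \le E(x_n)$, hence $E(x_n) \le E(x_0)$ for all $n$. As $\beta$ is nondecreasing this yields $\beta(E(x_n)) \le \beta(E(x_0)) = h$, and the remarks above give $F(x_{n+1}) \preceq h\,F(x_n)$ and then, by iteration, $F(x_n) \preceq h^n F(x_0)$.

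For the second pair \eqref{eq:convergence-function-property-2} the heart of the matter is the scalar claim $\beta(E(x_n)) \le \theta\,\lambda^{r^n}$. Here I would use \eqref{eq:standard-condition-beta-1}: the function $g(t) = t\,\beta(t)$ is quasi-homogeneous of degree $r$, so $g(st) \le s^r g(t)$ for $s \in [0,1]$, which rewrites as $\beta(st) \le s^{r-1}\beta(t)$. Taking $t = E(x_0)$ and $s = E(x_n)/E(x_0) \in [0,1]$ gives $\beta(E(x_n)) \le (E(x_n)/E(x_0))^{r-1}\beta(E(x_0))$. Now I invoke the decay estimate $E(x_n) \le \lambda^{S_n(r)} E(x_0)$ from Proposition~\ref{prop:initial-point-properties} (so $s \le \lambda^{S_n(r)}$), together with the factorization $\beta(E(x_0)) = \phi(E(x_0))\,\psi(E(x_0)) = \lambda\theta$ coming from \eqref{eq:standard-condition-psi}. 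Combining these and using the arithmetic identity $1 + (r-1)S_n(r) = r^n$, which is just the closed form of the geometric sum $(r-1)\sum_{k=0}^{n-1} r^k = r^n - 1$, yields $\beta(E(x_n)) \le \theta\,\lambda^{r^n}$, hence $F(x_{n+1}) \preceq \theta\lambda^{r^n}F(x_n)$. Iterating and collecting the exponents via $\sum_{k=0}^{n-1} r^k = S_n(r)$ then delivers the product estimate $F(x_n) \preceq \theta^n \lambda^{S_n(r)} F(x_0)$.

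The step I expect to be the main obstacle is making the scaling argument of the last paragraph rigorous in the degenerate situations where the division by $E(x_0)$ (or by $E(x_n)$) is not allowed, namely when $E(x_0) = 0$, or more generally when $\phi$ or $\beta$ vanishes at the relevant argument. These are precisely the cases that condition \eqref{eq:standard-condition-beta-2} (equivalently the factorization $\beta = \phi\psi$ in \eqref{eq:standard-condition-psi}) is designed to control: when $\phi$ vanishes the bound collapses to $0 \preceq 0$, and one checks directly that $\beta(E(x_n)) = 0 = \theta\lambda^{r^n}$ under the convention $0^0 = 1$. I would therefore isolate these degenerate cases at the outset and treat the generic case $E(x_0) > 0$ with $\phi(E(x_0)) > 0$ by the scaling computation above.
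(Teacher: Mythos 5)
Your proposal is correct and takes essentially the same route as the paper: the one-step bound $F(x_{n+1}) \preceq \beta(E(x_n))\,F(x_n)$ from \eqref{eq:convergence-function-CMS-condition-1}, the decay estimate $E(x_n) \le E(x_0)\,\lambda^{S_n(r)}$ of Proposition~\ref{prop:initial-point-properties}, quasi-homogeneity of $\beta$ of degree $r-1$ extracted from \eqref{eq:standard-condition-beta-1}, the factorization $\beta = \phi\,\psi$ from \eqref{eq:standard-condition-psi}, the identity $1+(r-1)S_n(r) = r^n$, and induction for the product estimates. The one cosmetic difference is that the paper sidesteps your division by $E(x_0)$ (and hence the separate degenerate-case discussion) by applying monotonicity first, $\beta(E(x_n)) \le \beta\left(E(x_0)\,\lambda^{S_n(r)}\right)$, and then using $\beta(st) \le s^{r-1}\beta(t)$ with $s = \lambda^{S_n(r)}$ and $t = E(x_0)$, which needs no case split.
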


\begin{proof}
We shall prove only \eqref{eq:convergence-function-property-2} since the proof of \eqref{eq:convergence-function-property-1} is similar.
Suppose $x_0$ is an initial point of $T$. We have ${x_n \in D}$ and ${E(x_n) \in J}$ since every iterate $x_n$ is an initial point of $T$.
Setting ${x = x_n}$ in \eqref{eq:convergence-function-CMS-condition-1}, we obtain  
\begin{equation}  \label{eq:convergence-function-iterates}
F(x_{n+1}) \preceq \beta(E(x_n)) \, F(x_n).
\end{equation}
Condition \eqref{eq:standard-condition-beta-1} implies that $\beta$ is a quasi-homogeneous function of degree ${r-1}$ on the interval $J$.
From this and Proposition~\ref{prop:initial-point-properties}, we get
\begin{equation} \label{eq:property-beta}
\beta(E(x_n)) \le \beta(E(x_0) \lambda^{S_n(r)}) \le \lambda^{(r - 1) S_n(r)} \beta(E(x_0) =
\theta  \lambda^{1 + (r - 1) S_n(r)} = \theta  \lambda^{r^n} .
\end{equation}
Combining \eqref{eq:convergence-function-iterates} and \eqref{eq:property-beta}, we get the first inequality in
\eqref{eq:convergence-function-property-2}.
The second inequality in \eqref{eq:convergence-function-property-2} follows from the first one by induction on ${n \ge 0}$.
\end{proof}

We end this section with two proposition from the theory of cone metric spaces over a solid vector space.
These statements are trivial in the case of metric spaces.

\begin{proposition}[\cite{Pro13}] \label{prop:property-convergence}
Let ${(X,d)}$ be a cone metric space over a solid vector space
${(Y,\preceq)}$.
Suppose ${(x_n)}$ is a sequence in $X$ satisfying
\begin{equation} \label{eq:property-convergence}
d(x_n,x) \preceq b_n
\quad \text{for all }\, n \ge 0,
\end{equation}
where $x$ is a point in $X$ and
${(b_n)}$ is a sequence in $Y$ converging to $0$.
Then the sequence $(x_n)$ converges to $x$.
\end{proposition}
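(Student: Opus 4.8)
The plan is to peel off the definition of convergence in the cone metric space and reduce everything to a squeeze argument for the strict order attached to the solid cone. Recall that $x_n \to x$ in $(X,d)$ means by definition that $d(x_n,x) \to 0$ in the solid vector space $Y$, and that a sequence $(y_n)$ in $Y$ converges to $0$ precisely when, for every strongly positive $c$ (i.e. $0 \prec\!\prec c$, meaning $c$ lies in the interior of $Y_+$), there is an index $N$ with $-c \prec\!\prec y_n \prec\!\prec c$ for all $n \ge N$. Hence it suffices to verify this last condition for the sequence $(d(x_n,x))$, whose terms moreover lie in $Y_+$.

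Before the main estimate I would isolate the two order properties of a solid vector space that carry the argument: (P1) $a \preceq b$ and $b \prec\!\prec c$ imply $a \prec\!\prec c$; and (P2) $a \prec\!\prec b$ and $b \preceq c$ imply $a \prec\!\prec c$. Both follow at once from the basic compatibility of the solid structure, namely that the sum of a strongly positive element and a positive element is strongly positive (if $0 \prec\!\prec u$ and $0 \preceq v$ then $0 \prec\!\prec u+v$, since $c-a$ decomposes as such a sum in each case); these are part of the preliminary theory of solid vector spaces in \cite{Pro13}. They are exactly the steps that collapse to nothing on the real line, which is why the statement is noted to be trivial in the metric case.

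Now fix any $c$ with $0 \prec\!\prec c$. Since $b_n \to 0$, choose $N$ so that $-c \prec\!\prec b_n \prec\!\prec c$ for all $n \ge N$; in particular $b_n \prec\!\prec c$. Because $d$ takes values in $Y_+$ we have $0 \preceq d(x_n,x)$, while the hypothesis gives $d(x_n,x) \preceq b_n$. Applying (P1) to $d(x_n,x) \preceq b_n \prec\!\prec c$ yields $d(x_n,x) \prec\!\prec c$, and applying (P2) to $-c \prec\!\prec 0 \preceq d(x_n,x)$ yields $-c \prec\!\prec d(x_n,x)$. Thus $-c \prec\!\prec d(x_n,x) \prec\!\prec c$ for all $n \ge N$, and since $c$ was an arbitrary strongly positive element this says $d(x_n,x) \to 0$, i.e. $x_n \to x$. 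The only genuine obstacle is (P1)--(P2): once the interaction between the partial order $\preceq$ and the strict order $\prec\!\prec$ is in hand, the remainder is merely feeding the chain $0 \preceq d(x_n,x) \preceq b_n \prec\!\prec c$ into the definition of convergence.
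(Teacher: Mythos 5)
Your proof is correct, and it is essentially the argument of the cited source: the paper states this proposition without proof (it is quoted from \cite{Pro13}), and there the convergence $b_n \to 0$ in a solid vector space is defined exactly via the strict order $\prec\!\prec$, so the squeeze $0 \preceq d(x_n,x) \preceq b_n \prec\!\prec c$ combined with the sandwich properties (P1)--(P2) is precisely how the statement is established. Your derivation of (P1)--(P2) from translation invariance and the fact that the sum of a strongly positive and a positive element is strongly positive is also the standard route, so nothing is missing.
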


\begin{proposition}[\cite{Pro13}] \label{prop:properties-Cauchy-CMS}
Let ${(X,d)}$ be a cone metric space over a solid vector space ${(Y,\preceq)}$.
Suppose ${(x_n)}$ is a sequence in $X$ satisfying
\begin{equation} \label{eq:properties-Cauchy-CMS}
d(x_n,x_m) \preceq b_n \quad\text{for all \,} n, m \ge 0 \text{ with } m \ge n,
\end{equation}
where ${(b_n)}$ is a sequence in $Y$ which converges to $0$. Then:
\begin{enumerate}[(i)]
    \item The sequence ${(x_n)}$ is a Cauchy sequence in $X$.
    \item If ${(x_n)}$ converges to a point ${x \in X}$, then
\begin{equation} \label{eq:properties-Cauchy-error-estimate-CMS}
d(x_n,x) \preceq b_n \quad \text{for all }\, n \ge 0.
\end{equation}
\end{enumerate}
\end{proposition}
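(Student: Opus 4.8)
The plan is to reduce both assertions to two order-theoretic properties of a solid vector space $(Y,\preceq)$ that come from the theory developed in \cite{Pro13}, and which replace the elementary facts used for ordinary metric spaces. The first is the compatibility of the partial ordering $\preceq$ with the strong (interior) ordering $\ll$ of the solid vector space: if $a \preceq b$ and $b \ll c$, then $a \ll c$. The second is the characterization of null sequences: if $(b_n)$ converges to $0$ in $Y$, then for every $c$ with $0 \ll c$ one has $b_n \ll c$ for all sufficiently large $n$. I would state these two facts at the outset and then handle (i) and (ii) separately.

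For part (i), I would fix an arbitrary $c$ with $0 \ll c$ and, using the second property, choose $N$ so that $b_n \ll c$ whenever $n \ge N$. For indices $m \ge n \ge N$ the hypothesis \eqref{eq:properties-Cauchy-CMS} gives $d(x_n,x_m) \preceq b_n$, and combining this with $b_n \ll c$ via the first property yields $d(x_n,x_m) \ll c$. The symmetry of the cone metric, $d(x_n,x_m)=d(x_m,x_n)$, disposes of the remaining case $n \ge m \ge N$ in exactly the same fashion (now through $b_m \ll c$). Hence $d(x_n,x_m) \ll c$ for all $n,m \ge N$, which is precisely the definition of a Cauchy sequence in a cone metric space over a solid vector space. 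This part is routine and uses nothing beyond the two quoted properties.

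For part (ii), assume $(x_n)$ converges to $x$, so that $d(x_m,x) \to 0$. I would fix $n$ and apply the triangle inequality against the running index $m \ge n$:
\[
d(x_n,x) \preceq d(x_n,x_m) + d(x_m,x) \preceq b_n + d(x_m,x),
\]
where the second step again invokes \eqref{eq:properties-Cauchy-CMS}. As $m \to \infty$ the right-hand side converges to $b_n$, since $d(x_m,x)\to 0$ and addition is continuous for the convergence of $Y$. The main obstacle is precisely this limit passage: it requires that the ordering be preserved under limits, i.e. that $a \preceq u_m$ for all $m$ together with $u_m \to u$ forces $a \preceq u$ — equivalently, that the positive cone $Y_+$ is sequentially closed for the convergence of the solid vector space. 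In the scalar case $Y=\Rset$ this is trivial, which is why the proposition is flagged as trivial for ordinary metric spaces; in the general solid setting it is the substantive ingredient, and I would invoke it as the corresponding order-closedness lemma of \cite{Pro13}. Applying that lemma with $a = d(x_n,x)$, $u_m = b_n + d(x_m,x)$ and $u = b_n$ yields $d(x_n,x)\preceq b_n$, establishing \eqref{eq:properties-Cauchy-error-estimate-CMS} and completing the proof.
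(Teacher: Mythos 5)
This proposition is stated in the paper without proof: it is imported verbatim from \cite{Pro13}, so there is no in-paper argument to compare against, and your attempt must be judged against the standard proof in that reference. Your reconstruction is correct and follows essentially the same route. Part (i) is exactly the standard argument: the compatibility rule $a \preceq b \ll c \Rightarrow a \ll c$ together with the characterization of null sequences ($b_n \to 0$ iff for every $c \gg 0$ one has $b_n \ll c$ eventually) gives $d(x_n,x_m) \ll c$ for all large $n,m$, with symmetry of $d$ covering the case $n \ge m$. For part (ii), the one step that deserves care is your appeal to sequential closedness of $Y_+$: in a general solid vector space this is not a topological hypothesis but a derived fact, following from the order-density property that $a \preceq c$ for every $c \gg 0$ forces $a \preceq 0$ (equivalently, $\inf\{c : c \gg 0\} = 0$), which is indeed available in \cite{Pro13}. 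The proof in the cited source uses that property directly — for arbitrary $c \gg 0$ choose $m \ge n$ with $d(x_m,x) \ll c$, deduce $d(x_n,x) \preceq d(x_n,x_m) + d(x_m,x) \preceq b_n + c$, and let $c$ range over all strictly positive elements — so your limit-passage formulation is the same argument with one extra layer of packaging (your closedness lemma is itself proved by precisely this $c \gg 0$ device). Provided you cite the order-closedness (or infimum) property from \cite{Pro13} rather than treating $Y_+$ as closed for a given topology, the proof is complete; you correctly identified this as the only substantive ingredient beyond the metric-space case.
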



\section{Local convergence theorems in cone metric spaces}
\label{sec:Local-convergence-theorems-in-cone-metric-spaces}

In this section, we present two general convergence theorems with error estimates for iterated contractions at a point in cone metric spaces. 

\begin{definition}[Iterated Contraction at a Point]  \label{def:iterated-contraction-at-a-point}
Let ${T \colon D \subset X \to X}$ be an operator of a cone metric space ${(X,d)}$ over a solid vector space ${(Y,\preceq)}$, 
and let ${E \colon D \to {\Rset}_+}$ be a function of initial conditions of $T$ with a gauge function on an interval $J$.
Then $T$ is said to be an \emph{iterated contraction with respect to $E$ at a point} $\xi \in D$ 
(with control function $\beta$) if ${E(\xi) \in J}$ and
\begin{equation} \label{eq:local-iterated-contraction-E}
d(Tx,\xi) \preceq \beta(E(x)) \, d(x,\xi)
\quad\text{for all } x \in D \text{ with } E(x) \in J,
\end{equation}
where ${\beta \colon J \to [0,1)}$ is a nondecreasing function. 
\end{definition}

\begin{proposition} \label{lem:fixed-point}
Let ${T \colon D \subset X \to X}$ be an operator of a cone metric space ${(X,d)}$ over a solid vector space ${(Y,\preceq)}$, 
and let ${E \colon D \to {\Rset}_+}$ be a function of initial conditions of $T$ with a gauge function on an interval $J$.
If $T$ is an \emph{iterated contraction} with respect to $E$ at a point $\xi \in D$, then 
$\xi$ is a unique fixed point of $T$ in the set ${U = \{ x \in D : E(x) \in J \}}$.
\end{proposition}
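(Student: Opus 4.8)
The plan is to prove the two assertions separately and directly from the defining inequality \eqref{eq:local-iterated-contraction-E}, using only the order-algebraic structure of the solid vector space $(Y,\preceq)$; no completeness or solidness is actually needed here. First note that $\xi$ belongs to $U$ by hypothesis, since $E(\xi) \in J$. The two facts I will repeatedly invoke are that a cone metric takes values in the positive cone, so $d(u,v) \succeq 0$ for all $u,v$, and that the cone $Y_+$ is pointed, so the order $\preceq$ is antisymmetric: $a \preceq 0$ together with $a \succeq 0$ forces $a = 0$. I will also use that multiplication by a nonnegative real preserves $\succeq 0$.

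For existence, I would substitute $x = \xi$ into \eqref{eq:local-iterated-contraction-E}, which is legitimate because $E(\xi) \in J$. Since $d(\xi,\xi) = 0$, the right-hand side collapses to $\beta(E(\xi)) \cdot 0 = 0$, yielding $d(T\xi,\xi) \preceq 0$. Combined with the cone-metric inequality $d(T\xi,\xi) \succeq 0$ and antisymmetry of $\preceq$, this gives $d(T\xi,\xi) = 0$, hence $T\xi = \xi$. Thus $\xi$ is a fixed point of $T$ lying in $U$.

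For uniqueness, I would take an arbitrary fixed point $\eta \in U$, so $T\eta = \eta$ and $E(\eta) \in J$, and substitute $x = \eta$ into \eqref{eq:local-iterated-contraction-E}. Using $T\eta = \eta$ on the left, this reads $d(\eta,\xi) \preceq \beta(E(\eta))\, d(\eta,\xi)$, which rearranges to $(1 - \beta(E(\eta)))\, d(\eta,\xi) \preceq 0$. Because $\beta(E(\eta)) \in [0,1)$, the scalar $1 - \beta(E(\eta))$ is strictly positive, so the left-hand side is also $\succeq 0$; antisymmetry then forces $(1 - \beta(E(\eta)))\, d(\eta,\xi) = 0$, and dividing by the positive scalar gives $d(\eta,\xi) = 0$, i.e. $\eta = \xi$.

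The only mild obstacle is the order-theoretic bookkeeping: one must be explicit that $\preceq$ is a genuine partial order (pointedness of $Y_+$) to pass from the two one-sided inequalities to an equality, and that scalar multiplication by $1 - \beta(E(\eta)) \ge 0$ keeps the vector in the cone. Once these standard cone properties are in hand, both existence and uniqueness are immediate single substitutions, so there is no real analytic difficulty in this statement.
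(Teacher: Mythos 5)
Your proof is correct and takes essentially the same route as the paper: substituting $x=\xi$ into \eqref{eq:local-iterated-contraction-E} to obtain $d(T\xi,\xi)=0$, then substituting $x=\eta$ for a second fixed point $\eta\in U$ and using $\beta(E(\eta))<1$ to force $d(\eta,\xi)=0$. The order-theoretic bookkeeping you make explicit (pointedness of the cone, multiplication by the positive scalar $1-\beta(E(\eta))$) is precisely what the paper's shorter argument leaves implicit.
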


\begin{proof}
Setting ${x = \xi}$ in \eqref{eq:local-iterated-contraction-E}, we get ${d(T\xi,\xi) \preceq 0}$. Therefore,
${d(T\xi,\xi) = 0}$ which means that $\xi$ is a fixed point of $T$. Suppose ${\eta \in U}$ is also a fixed point of $T$. 
Applying \eqref{eq:local-iterated-contraction-E} with
${x = \eta}$ we obtain
\(
d(\eta,\xi) \preceq \beta(E(\eta)) \, d(\eta,\xi)
\)
which implies that ${d(\eta,\xi) \preceq 0}$ because the values of $\beta$ are less than 1.  
From this, we conclude that ${\eta = \xi}$. Hence, $\xi$ is a unique fixed point of $T$ in the set $U$.
\end{proof}

In the following two theorems, we consider the problem of approximating the fixed points of iterated contractions at a point 
in a cone metric space. The first one extends Theorem~3.6 of \cite{Pro09}.

\begin{theorem} 
\label{thm:first-local-convergence-theorem-CMS}
Let ${T \colon D \subset X \to X}$ be an operator of a cone metric space ${(X,d)}$ over a solid vector space
${(Y,\preceq)}$, and let
${E \colon D \to {\Rset}_+}$ be a function of initial conditions of $T$ with a gauge function $\varphi$ of order $r$ on an interval $J$.
Suppose $T$ is an iterated contraction with respect to $E$ at a point $\xi$ 
with control function $\beta$ satisfying \eqref{eq:standard-condition-beta-1} and \eqref{eq:standard-condition-beta-2}.
Then for each initial point $x_0$ of $T$ the Picard iteration \eqref{eq:Picard-iteration}
remains in in the set ${U = \{ x \in D : E(x) \in J \}}$ and converges to $\xi$ with error estimates
\begin{equation}  \label{eq:first-local-convergence-theorem-CMS-error-estimates}
d(x_{n+1}, \xi) \preceq \theta \lambda^{r^n} d(x_n,\xi)
\quad\text{and}\quad
d(x_n,\xi) \preceq \theta^n \lambda^{S_n(r)} d(x_0,\xi)
\end{equation}
for all $n \ge 0$, where ${\lambda = \phi(E(x_0))}$, ${\theta  = \psi(E(x_0))}$ and
${\psi \colon J \to {\Rset}_+}$ is a function satisfying \eqref{eq:standard-condition-psi}. 
\end{theorem}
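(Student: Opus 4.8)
The plan is to reduce the statement to Proposition~\ref{prop:convergence-function-properties} by recognizing that the distance to $\xi$ itself furnishes a convergence function. Concretely, I would define $F \colon D \to Y_+$ by $F(x) = d(x,\xi)$, which indeed lands in the cone $Y_+$ since every cone-metric value does. The first thing to verify is that $F$ is a convergence function of $T$ with respect to $E$, with control functions $\beta$ and $\gamma = 1 + \beta$: condition \eqref{eq:convergence-function-CMS-condition-1} for $F$ is literally the iterated-contraction hypothesis \eqref{eq:local-iterated-contraction-E}, while condition \eqref{eq:convergence-function-CMS-condition-2} follows from the triangle inequality, since for $x \in D$ with $E(x) \in J$,
\[
d(x,Tx) \preceq d(x,\xi) + d(Tx,\xi) \preceq \bigl(1 + \beta(E(x))\bigr)\,d(x,\xi),
\]
and $\gamma = 1 + \beta$ is nondecreasing from $J$ into $\Rset_+$.

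Once this is in place, the error estimates come essentially for free. Since $\varphi$ is a gauge function of order $r$ it satisfies \eqref{eq:gauge-function-of-high-order-second-condition}, and $\beta$ is assumed to satisfy \eqref{eq:standard-condition-beta-1} and \eqref{eq:standard-condition-beta-2}; thus all hypotheses of Proposition~\ref{prop:convergence-function-properties} hold for the initial point $x_0$, and \eqref{eq:convergence-function-property-2} translates, through $F(x_n) = d(x_n,\xi)$, into exactly the two estimates in \eqref{eq:first-local-convergence-theorem-CMS-error-estimates}. For the claim that the iteration remains in $U$, I would invoke Proposition~\ref{prop:initial-point-properties}: every iterate $x_n = T^n x_0$ is again an initial point of $T$, so $x_n \in D$ and $E(x_n) \in J$, that is, $x_n \in U$.

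The remaining task is convergence to $\xi$, which I would deduce from Proposition~\ref{prop:property-convergence}. The key is to show that the scalar coefficient in the second error estimate tends to $0$. Writing $h = \beta(E(x_0)) = \lambda\theta < 1$ (using \eqref{eq:standard-condition-psi} together with $\beta < 1$), and using $S_n(r) \ge n$ for $r \ge 1$ together with $\lambda = \phi(E(x_0)) \in [0,1]$, I obtain
\[
0 \le \theta^n \lambda^{S_n(r)} \le (\lambda\theta)^n = h^n \to 0 .
\]
Hence $b_n = \theta^n \lambda^{S_n(r)}\,d(x_0,\xi)$ satisfies $d(x_n,\xi) \preceq b_n$ with $b_n \to 0$ in $Y$, and Proposition~\ref{prop:property-convergence} yields $x_n \to \xi$.

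I expect the proof to be mostly bookkeeping once the reduction $F = d(\cdot,\xi)$ is made; the two steps needing genuine care are verifying that this $F$ satisfies \emph{both} defining conditions of a convergence function, so that Proposition~\ref{prop:convergence-function-properties} applies without modification, and confirming the decay of the scalar factor $\theta^n\lambda^{S_n(r)}$, for which the inequality $S_n(r)\ge n$ and the bound $\lambda\theta<1$ are the decisive facts. The passage from a null real sequence times a fixed vector to a null sequence in $Y$ is the only place where the solid-vector-space structure is genuinely used.
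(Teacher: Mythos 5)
Your proof is correct, and it differs from the paper's in packaging rather than in substance. The paper argues directly: since every iterate $x_n$ is an initial point, setting $x = x_n$ in \eqref{eq:local-iterated-contraction-E} gives $d(x_{n+1},\xi) \preceq \beta(E(x_n))\, d(x_n,\xi)$, and the bound $\beta(E(x_n)) \le \theta\,\lambda^{r^n}$ --- inequality \eqref{eq:property-beta}, established inside the proof of Proposition~\ref{prop:convergence-function-properties} --- then yields the first estimate, the second following by induction; convergence is read off from Proposition~\ref{prop:property-convergence} with $b_n = \theta^n \lambda^{S_n(r)} d(x_0,\xi)$. You instead verify that $F(x) = d(x,\xi)$ is a convergence function and invoke Proposition~\ref{prop:convergence-function-properties} as a black box; this is sound, and it mirrors how the paper itself later specializes Theorem~\ref{thm:third-convergence-theorem-CMS} with $F(x) = d(x,Tx)$ to obtain Corollary~\ref{cor:third-convergence-theorem-CMS}. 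The cost of your route is the verification of condition \eqref{eq:convergence-function-CMS-condition-2} with $\gamma = 1+\beta$: the conclusion \eqref{eq:convergence-function-property-2} that you actually use never touches $\gamma$, so your triangle-inequality step is logically superfluous bookkeeping forced on you by the definition of convergence function, whereas the paper's inline argument needs only the $\beta$-inequality. What your write-up buys in exchange is an explicit justification, via $S_n(r) \ge n$, $\lambda \in [0,1]$ and $\theta\lambda = \beta(E(x_0)) < 1$, that the scalar factor $\theta^n \lambda^{S_n(r)}$ tends to $0$; the paper applies Proposition~\ref{prop:property-convergence} without spelling this out, so you have filled a small verification it leaves implicit. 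The remaining pieces --- membership of the orbit in $U$ via Proposition~\ref{prop:initial-point-properties}, and the passage from the null scalar sequence times the fixed vector $d(x_0,\xi)$ to a null sequence in $Y$ --- coincide with what the paper does.
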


\begin{proof}
It follows from the second estimate in \eqref{eq:first-local-convergence-theorem-CMS-error-estimates} and 
Proposition~\ref{prop:property-convergence} with
\[ 
b_n = \theta^n \lambda ^{S_n(r)} d(x_0,\xi)
\] 
that $(x_n)$ converges to $\xi$ in $X$. 
By the definition of initial points, it follows that ${x_0 \in U}$. Starting from $x_0$ the iterative sequence
\eqref{eq:Picard-iteration} remains in the set $U$ because every iterate $x_n$ is an initial point of $T$.
Setting ${x = x_n}$ in \eqref{eq:local-iterated-contraction-E}, we obtain  
\begin{equation}  \label{eq:local-iterated-contraction-corollary}
d(x_{n+1}, \xi) \preceq \beta(E(x_n)) \, d(x_n,\xi),
\end{equation}
From \eqref{eq:local-iterated-contraction-corollary} and \eqref{eq:property-beta}, we get the first estimate in
\eqref{eq:first-local-convergence-theorem-CMS-error-estimates}.
The second estimate in \eqref{eq:first-local-convergence-theorem-CMS-error-estimates} is a consequence of the first one.
\end{proof}

Setting $\beta = \phi$ in Theorem~\ref{thm:first-local-convergence-theorem-CMS} we get the following result.

\begin{corollary} 
\label{thm:first-local-convergence-theorem-CMS-phi}
Let ${T \colon D \subset X \to X}$ be an operator of a cone metric space ${(X,d)}$ over a solid vector space
${(Y,\preceq)}$, and let ${E \colon D \to {\Rset}_+}$ be a function of initial conditions of $T$ with a strict gauge function $\varphi$ of order $r$ on an interval $J$. 
If $T$ is an iterated contraction with respect to $E$ at a point $\xi$ 
with control function $\phi$ satisfying \eqref{eq:standard-condition-phi}, then
for each initial point $x_0$ of $T$ the Picard iteration \eqref{eq:Picard-iteration}
remains in in the set ${U = \{ x \in D : E(x) \in J \}}$ and converges to $\xi$ with error estimates
\begin{equation}  \label{eq:first-local-convergence-theorem-CMS-phi-error-estimates}
d(x_{n+1}, \xi) \preceq \lambda^{r^n} \, d(x_n,\xi)
\quad\text{and}\quad
d(x_n,\xi) \preceq \lambda ^{S_n(r)} \, d(x_0,\xi)
\end{equation}
for all $n \ge 0$, where ${\lambda = \phi(E(x_0))}$.
\end{corollary}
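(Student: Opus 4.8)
The plan is to deduce this corollary directly from Theorem~\ref{thm:first-local-convergence-theorem-CMS} by specializing the control function to $\beta = \phi$. The only work is to check that this choice meets every hypothesis of the theorem and that the resulting estimates collapse to \eqref{eq:first-local-convergence-theorem-CMS-phi-error-estimates}.

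First I would verify that $\phi$ is an admissible control function for an iterated contraction, i.e. that $\phi \colon J \to [0,1)$, as required by Definition~\ref{def:iterated-contraction-at-a-point}. Since $\varphi$ is assumed to be a \emph{strict} gauge function of order $r$, we have $\varphi(t) < t$ for every $t > 0$ in $J$; combined with $\varphi(t) = t\,\phi(t)$ from \eqref{eq:standard-condition-phi}, this yields $\phi(t) < 1$ for all $t > 0$. As $J$ contains points $t > 0$ and $\phi$ is nondecreasing, $\phi(0) \le \phi(t) < 1$ as well, so $\phi(t) < 1$ throughout $J$. Thus the assumption that $T$ is an iterated contraction with control function $\phi$ is consistent with Definition~\ref{def:iterated-contraction-at-a-point}.

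Next I would check conditions \eqref{eq:standard-condition-beta-1} and \eqref{eq:standard-condition-beta-2} for $\beta = \phi$. For \eqref{eq:standard-condition-beta-1}, note that $t\,\beta(t) = t\,\phi(t) = \varphi(t)$ by \eqref{eq:standard-condition-phi}, and $\varphi$ is a strict gauge function of order $r$ on $J$ by hypothesis; hence $t\,\beta(t)$ is a strict gauge function of order $r$ on $J$. Condition \eqref{eq:standard-condition-beta-2} holds trivially, since with $\beta = \phi$ the implication $\phi(t) = 0 \Rightarrow \beta(t) = 0$ is an identity. Consequently the representation \eqref{eq:standard-condition-psi}, namely $\beta(t) = \phi(t)\,\psi(t)$, is satisfied by the constant function $\psi \equiv 1$.

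Finally, applying Theorem~\ref{thm:first-local-convergence-theorem-CMS} with these choices gives $\theta = \psi(E(x_0)) = 1$, so the general estimates \eqref{eq:first-local-convergence-theorem-CMS-error-estimates} reduce exactly to \eqref{eq:first-local-convergence-theorem-CMS-phi-error-estimates}, while the convergence of $(x_n)$ to $\xi$ and the invariance of the set $U$ carry over verbatim. I expect the only genuinely non-formal step to be the verification that $\phi$ takes values strictly below $1$; this is precisely where the strictness of the gauge function (rather than a mere gauge function) is used, and it accounts for the strengthened hypothesis in the corollary relative to the theorem.
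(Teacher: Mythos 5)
Your proposal is correct and follows exactly the paper's route: the paper derives this corollary simply by setting $\beta = \phi$ in Theorem~\ref{thm:first-local-convergence-theorem-CMS}, which is precisely your specialization with $\psi \equiv 1$ and $\theta = 1$. Your explicit verification that strictness of $\varphi$ forces $\phi(t) < 1$ on $J$, and that \eqref{eq:standard-condition-beta-1} and \eqref{eq:standard-condition-beta-2} hold for $\beta = \phi$, correctly fills in the details the paper leaves tacit.
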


\begin{theorem} 
\label{thm:third-local-convergence-theorem-CMS}
Let ${T \colon D \subset X \to X}$ be an operator of a cone metric space ${(X,d)}$ over a solid vector space
${(Y,\preceq)}$, and let ${E \colon D \to {\Rset}_+}$ be a function of initial conditions of $T$ with a nondecreasing gauge function 
$\varphi$ on an interval $J$.
Suppose $T$ is an iterated contraction with respect to $E$ at a point $\xi$ with control function $\beta$.
Assume there exist ${\sigma \in (0,1)}$, ${r \ge 1}$ and a nondecreasing function ${c \colon [0,\sigma] \to J}$ such that
\begin{equation}  \label{eq:third-local-convergence-theorem-CMS}
\beta(c(t)) \le t
\quad\text{and}\quad
\varphi(c(t)) \le c(t^r)
\quad\text{for all } \, t \in [0,\sigma].
\end{equation}
If $x_0$ is an initial point point of $T$ satisfying
\begin{equation} \label{eq:third-local-convergence-theorem-CMS-initial-condition}
E(x_0) \le c(\sigma),
\end{equation}
then Picard sequence \eqref{eq:Picard-iteration} converges to $\xi$ with error estimates
\begin{equation}  \label{eq:third-local-convergence-theorem-CMS-error-estimates}
d(x_{n+1},\xi) \preceq \sigma^{r^n} \, d(x_n,\xi)
\quad\text{and}\quad
d(x_n,\xi) \preceq \sigma^{S_n(r)} \, d(x_0,\xi)
\end{equation}
\end{theorem}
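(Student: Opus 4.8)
The plan is to control the real sequence $E(x_n)$ first, and then transfer that control to the cone-valued distances $d(x_n,\xi)$ through the iterated contraction inequality \eqref{eq:local-iterated-contraction-E}. As a preliminary, I would record that since $x_0$ is an initial point of $T$, Proposition~\ref{prop:initial-point-properties} guarantees that every iterate $x_n = T^n x_0$ is again an initial point; in particular $x_n \in D$ and $E(x_n) \in J$ for all $n \ge 0$, so all the relevant inequalities are applicable along the orbit, and $E(x_{n+1}) = E(Tx_n) \le \varphi(E(x_n))$ by Definition~\ref{def:function-of-initial-condition}.

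The central step is to prove by induction on $n$ that
\[
E(x_n) \le c(\sigma^{r^n}) \quad\text{for all } n \ge 0.
\]
The base case $n = 0$ is exactly the initial condition \eqref{eq:third-local-convergence-theorem-CMS-initial-condition}, since $r^0 = 1$. For the inductive step I would start from $E(x_{n+1}) \le \varphi(E(x_n))$, then use the monotonicity of $\varphi$ together with the induction hypothesis to bound this by $\varphi(c(\sigma^{r^n}))$, and finally invoke the second inequality in \eqref{eq:third-local-convergence-theorem-CMS} at $t = \sigma^{r^n}$ (noting $\sigma^{r^n} \in [0,\sigma]$ because $\sigma \in (0,1)$ and $r^n \ge 1$) to conclude $\varphi(c(\sigma^{r^n})) \le c(\sigma^{r^{n+1}})$. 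This chaining of the two hypotheses on $c$, leaning on both the nondecreasing assumption on $\varphi$ and the semi-conjugacy $\varphi \circ c \le c(\,\cdot^{\,r})$, is the heart of the argument and the step I expect to require the most care.

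Once the bound on $E(x_n)$ is in place, the first inequality in \eqref{eq:third-local-convergence-theorem-CMS} and the monotonicity of $\beta$ give $\beta(E(x_n)) \le \beta(c(\sigma^{r^n})) \le \sigma^{r^n}$. Substituting $x = x_n$ into \eqref{eq:local-iterated-contraction-E} then yields $d(x_{n+1},\xi) \preceq \beta(E(x_n))\,d(x_n,\xi) \preceq \sigma^{r^n} d(x_n,\xi)$, which is the first estimate in \eqref{eq:third-local-convergence-theorem-CMS-error-estimates}. The second estimate follows by a routine telescoping induction, using that the exponents collapse to $\sum_{k=0}^{n-1} r^k = S_n(r)$ by \eqref{eq:sum-of-geometric-progression}.

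Finally, to obtain convergence I would set $b_n = \sigma^{S_n(r)}\, d(x_0,\xi)$. Since $r \ge 1$ forces $S_n(r) \ge n \to \infty$ and $\sigma \in (0,1)$, the scalars $\sigma^{S_n(r)}$ tend to $0$, so $b_n \to 0$ in $Y$; Proposition~\ref{prop:property-convergence} applied with this choice of $b_n$ then delivers $x_n \to \xi$, completing the proof.
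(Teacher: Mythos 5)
Your proposal is correct and follows essentially the same route as the paper: establishing by induction that $E(x_n) \le c\left( \sigma^{r^n} \right)$ via the semi-conjugacy $\varphi(c(t)) \le c(t^r)$ and the monotonicity of $\varphi$, then deducing $\beta(E(x_n)) \le \sigma^{r^n}$ from $\beta(c(t)) \le t$ and feeding this into the iterated-contraction inequality \eqref{eq:local-iterated-contraction-E}. The only difference is that you spell out the steps the paper leaves implicit (the telescoping for the second estimate and the convergence via Proposition~\ref{prop:property-convergence} with $b_n = \sigma^{S_n(r)} d(x_0,\xi)$), and these fill-ins are correct.
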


\begin{proof}
We shall prove only the first estimate in \eqref{eq:third-local-convergence-theorem-CMS-error-estimates}.  
Using  Definition~\ref{def:function-of-initial-condition} and the second inequality in \eqref{eq:third-local-convergence-theorem-CMS} 
it is easy to see that for every initial point $x$ of $T$ and ${t \in [0,\sigma]}$,
\[
E(x) \le c(t) \quad\text{implies}\quad E(Tx) \le c(t^r).
\]
Then by induction one can prove that
\[
E(x_n) \le c\left( \sigma^{r^n} \right)
\quad\text{for all}\quad n \ge 0.
\]
This inequality together with the first inequality in \eqref{eq:third-local-convergence-theorem-CMS} implies
\[
\beta(E(x_n)) \le \sigma^{r^n}.
\]
From this and \eqref{eq:local-iterated-contraction-corollary}, we get the first estimate in 
\eqref{eq:third-local-convergence-theorem-CMS-error-estimates}.
\end{proof}


\section{General convergence theorems in cone metric spaces}
\label{sec:General-convergence-theorems-in-cone-metric-spaces}

In this section, we establish two general convergence theorems with error estimates for iterative processes 
of the type \eqref{eq:Picard-iteration}. These results extend Theorems 5.4 and 5.6 of \cite{Pro10}.

\begin{theorem} 
\label{thm:first-convergence-theorem-CMS}
Let ${T \colon D \subset X \to X}$ be an operator of a complete cone metric space ${(X,d)}$ over a solid vector space ${(Y,\preceq)}$,
${E \colon D \to {\Rset}_+}$ be a function of initial conditions of $T$ with gauge function $\varphi$ on an interval $J$ satisfying
\eqref{eq:gauge-function-of-high-order-second-condition}, and let ${F \colon D \to Y_+}$ be a convergence function of $T$ with control functions $\beta$ and $\gamma$.
Then the following statements hold true:
\begin{enumerate}[(i)]
\item \textsc{Convergence}. Starting from any initial point $x_0$ of $T$, the Picard iteration \eqref{eq:Picard-iteration} is well-defined, remains in the closed ball ${\overline{U}(x_0,\rho)}$ and converges to a point ${\xi \in \overline{U}(x_0,\rho)}$, where
\[
\rho = \frac{\gamma(E(x_0))}{1 - \beta(E(x_0))} \, F(x_0) .
\]
\item \textsc{A priori estimate}. For all ${n \ge 0}$ we have the following estimate
\begin{equation} \label{eq:first-convergence-theorem-CMS-a-priori-estimate}
d(x_n,\xi) \preceq \frac{h^n}{1 - h} \, \gamma(E(x_0)) \, F(x_0) \,,
\end{equation}
where ${h = \beta(E(x_0))}$.
\item \textsc{First a posteriori estimate}. For all ${n \ge 0}$ we have the following estimate
\begin{equation} \label{eq:first-convergence-theorem-CMS-a-posteriori-estimate-1}
d(x_n,\xi) \preceq \frac{\gamma(E(x_n))}{1 - \beta(E(x_n))} \, F(x_n).
\end{equation}
\item \textsc{Second a posteriori estimate}. For all ${n \ge 0}$ we have the following estimate
\begin{equation} \label{eq:first-convergence-theorem-CMS-a-posteriori-estimate-2}
d(x_{n+1},\xi) \preceq \frac{\beta(E(x_n))}{1 - \beta(E(x_{n+1}))} \, \gamma(E(x_{n+1})) \, F(x_n) .
\end{equation}
\item
\textsc{Existence of a fixed point}. If ${\xi \in D}$ and $T$ is continuous at $\xi$, then $\xi$ is a fixed point of $T$.
\end{enumerate}
\end{theorem}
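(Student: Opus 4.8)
The plan is to establish the five statements in the order (i)--(ii), (iii), (iv), (v), since the a posteriori bounds and the fixed-point assertion all rest on the convergence argument. Two preliminary observations drive everything. First, because $x_0$ is an initial point, Proposition~\ref{prop:initial-point-properties} guarantees that every iterate $x_n$ is again an initial point, so $x_n \in D$, $E(x_n) \in J$, and the defining inequalities \eqref{eq:convergence-function-CMS-condition-1} and \eqref{eq:convergence-function-CMS-condition-2} may be invoked at each $x_n$. Second, since the gauge function satisfies \eqref{eq:gauge-function-of-high-order-second-condition}, Definition~\ref{def:function-of-initial-condition} yields $E(x_{n+1}) \le \varphi(E(x_n)) \le E(x_n)$, so the scalar sequence $(E(x_n))$ is nonincreasing and hence $\gamma(E(x_n)) \le \gamma(E(x_0))$ by monotonicity of $\gamma$.

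For (i) and (ii), the goal is to produce a single dominating sequence $b_n \in Y_+$ with $b_n \to 0$ that controls all Cauchy differences, so that Proposition~\ref{prop:properties-Cauchy-CMS} applies. Combining the displacement bound \eqref{eq:convergence-function-CMS-condition-2} with the decay $F(x_n) \preceq h^n F(x_0)$ from \eqref{eq:convergence-function-property-1}, where $h = \beta(E(x_0))$, gives
\[
d(x_n, x_{n+1}) \preceq \gamma(E(x_n)) \, F(x_n) \preceq h^n \, \gamma(E(x_0)) \, F(x_0).
\]
Telescoping along the cone triangle inequality then produces, for $m \ge n$,
\[
d(x_n, x_m) \preceq \sum_{k=n}^{m-1} h^k \, \gamma(E(x_0)) \, F(x_0) \preceq \frac{h^n}{1 - h} \, \gamma(E(x_0)) \, F(x_0) =: b_n.
\]
Since $0 \le h < 1$, the scalar factor $h^n \to 0$, whence $b_n \to 0$ in $Y$. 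Proposition~\ref{prop:properties-Cauchy-CMS}(i) makes $(x_n)$ a Cauchy sequence, completeness of $X$ supplies a limit $\xi$, and part~(ii) of the same proposition gives $d(x_n, \xi) \preceq b_n$, which is exactly the a priori estimate \eqref{eq:first-convergence-theorem-CMS-a-priori-estimate}. Putting $n = 0$ shows $d(x_0, \xi) \preceq \rho$, so $\xi \in \overline{U}(x_0,\rho)$, and the telescoped bound with $n = 0$ shows every $x_n$ lies in $\overline{U}(x_0,\rho)$.

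For (iii), I would use that each $x_n$ is itself an initial point and that the tail $(x_n, x_{n+1}, \ldots)$ is a Picard sequence converging to the same limit $\xi$; applying the a priori estimate (ii) to this shifted sequence at its zeroth index gives $d(x_n,\xi) \preceq \frac{1}{1 - \beta(E(x_n))}\,\gamma(E(x_n))\,F(x_n)$, which is \eqref{eq:first-convergence-theorem-CMS-a-posteriori-estimate-1}. For (iv), I would substitute into (iii) at index $n+1$ the contraction bound $F(x_{n+1}) \preceq \beta(E(x_n))\,F(x_n)$ furnished by \eqref{eq:convergence-function-CMS-condition-1}, which yields \eqref{eq:first-convergence-theorem-CMS-a-posteriori-estimate-2} at once. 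For (v), continuity of $T$ at $\xi \in D$ gives $x_{n+1} = T x_n \to T\xi$, while also $x_{n+1} \to \xi$; uniqueness of limits in a cone metric space over a solid vector space forces $T\xi = \xi$.

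The main obstacle is the passage from the pointwise displacement estimate to a genuinely null dominating sequence in $Y$: one must verify that the geometric sum behaves correctly under $\preceq$ and that the product of the \emph{fixed} vector $\gamma(E(x_0))\,F(x_0) \in Y_+$ with the scalar null sequence $h^n/(1-h)$ converges to $0$, which is precisely what lets Propositions~\ref{prop:property-convergence} and \ref{prop:properties-Cauchy-CMS} be invoked. Once this is in place, the remaining parts are bookkeeping with the monotone control functions $\beta$ and $\gamma$ and the fact that initial points propagate along the iteration.
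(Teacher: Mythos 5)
Your proposal is correct and follows essentially the same route as the paper's proof: the telescoped geometric bound $d(x_n,x_m) \preceq \frac{h^n}{1-h}\,\gamma(E(x_0))\,F(x_0)$ fed into Proposition~\ref{prop:properties-Cauchy-CMS} and completeness, the first a posteriori estimate obtained by re-running the a priori bound with $x_n$ as a fresh initial point, and (iv) by a single application of \eqref{eq:convergence-function-CMS-condition-1}. The only differences are cosmetic — you bound each displacement $d(x_j,x_{j+1})$ by $h^j\gamma(E(x_0))F(x_0)$ before summing rather than factoring $\gamma(E(x_0))$ out of the sum first, and you spell out the continuity-plus-uniqueness-of-limits argument for (v), which the paper states but leaves implicit.
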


\begin{proof}
It follows from \eqref{eq:gauge-function-of-high-order-second-condition} and Proposition~\ref{prop:initial-point-properties} 
that ${E(x_n) \le E(x_0)}$ for all ${n \ge 0}$. 
Let ${m,n \in \Nset}$ with ${m \ge n}$.
From the triangle inequality, \eqref{eq:convergence-function-CMS-condition-2} and Proposition~\ref{prop:convergence-function-properties}, we get
\begin{eqnarray*}
d(x_n,x_m)
& \preceq & \sum_{j=n}^m {d(x_j,x_{j+1})} \preceq \sum_{j=n}^m {\gamma(E(x_j)) \, F(x_j)} \preceq \gamma(E(x_0)) \sum_{j=n}^m {F(x_j)}\\
& \preceq & \left( \sum_{j=n}^{m} {h^j} \right) \gamma(E(x_n)) F(x_0)
\preceq \frac{h^n}{1 - h} \, \gamma(E(x_0)) \, F(x_0).
\end{eqnarray*}
Therefore,
\begin{equation}\label{eq:general-Cauchy-2}
d(x_n,x_m) \preceq b_n, \quad\text{where}\quad b_n = \frac{h^n}{1 - h} \, \gamma(E(x_0)) \, F(x_0).
\end{equation}
Note that ${(b_n)}$ is a sequence in $Y$ which converges to $0$ since ${h^n \to 0}$ in $\Rset$.
By Proposition~\ref{prop:properties-Cauchy-CMS}(i), we conclude that ${(x_n)}$ is a Cauchy sequence.
By the completeness of the space $X$, we deduce that ${(x_n)}$ converges to a point ${\xi \in X}$.
Now it follows from Proposition~\ref{prop:properties-Cauchy-CMS}(ii) that
\begin{equation} \label{eq:a-priori-estimate-CMS}
d(x_n,\xi) \preceq b_n
\end{equation}
for every $n \ge 0$. From this we get the estimate \eqref{eq:first-convergence-theorem-CMS-a-priori-estimate}.
Setting $n=0$ in \eqref{eq:first-convergence-theorem-CMS-a-priori-estimate}, we get
\begin{equation} \label{eq:a-posteriori-estimate-CMS}
d(x_0,\xi) \preceq \frac{\gamma(E(x_0))}{1 - \beta(E(x_0))} \, F(x_0)
\end{equation}
which means that ${\xi \in \overline{U}(x_0,\rho)}$. The inequality \eqref{eq:a-posteriori-estimate-CMS} holds for every initial point
$x_0$ of $T$.
Therefore, applying \eqref{eq:a-posteriori-estimate-CMS} to $x_n$ instead of $x_0$, we obtain 
\eqref{eq:first-convergence-theorem-CMS-a-posteriori-estimate-1}.
From \eqref{eq:first-convergence-theorem-CMS-a-posteriori-estimate-1} and \eqref{eq:convergence-function-CMS-condition-1}, we obtain
\begin{eqnarray*}
d(x_{n+1},\xi)
& \preceq & \frac{\gamma(E(x_{n+1}))}{1 - \beta(E(x_{n+1}))} \, F(x_{n+1})\\
& \preceq & \frac{\beta(E(x_n))}{1 - \beta(E(x_{n+1}))} \, \gamma(E(x_{n+1})) \, F(x_n)
\end{eqnarray*}
which proves the estimate \eqref{eq:first-convergence-theorem-CMS-a-posteriori-estimate-2}.
Setting $n = 0$ in \eqref{eq:general-Cauchy-2}, we get
${d(x_0,x_m) \preceq \rho}$ for every ${m \ge 0}$. Hence, the sequence $(x_n)$ lies in the ball ${\overline{U}(x_0,\rho)}$.
\end{proof}

\begin{theorem} 
\label{thm:third-convergence-theorem-CMS}
Let ${(X,d)}$ be a complete cone metric space over a solid vector space ${(Y,\preceq)}$, ${T \colon D \subset X \to X}$
be an operator, ${E \colon D \to \Rset_+}$ be a function of initial conditions of $T$ with a gauge function
$\varphi$ of order ${r \ge 1}$ on an interval $J$, and let
${F \colon D \to Y_+}$ be a convergence function of $T$ with control functions $\beta$ and $\gamma$ satisfying
\eqref{eq:standard-condition-beta-1} and \eqref{eq:standard-condition-beta-2}.
Then the following statements hold true:
\begin{enumerate}[(i)]
\item
\textsc{Convergence}. Starting from any initial point $x_0$ of $T$, the Picard iteration \eqref{eq:Picard-iteration} is well-defined, 
remains in the closed ball ${\overline{U}(x_0,\rho)}$ and converges to a point ${\xi \in \overline{U}(x_0,\rho)}$, where
\[
\rho = \frac{\gamma(E(x_0))}{1 - \beta(E(x_0))} \, F(x_0).
\]
\item
\textsc{A priori estimate}. For all ${n \ge 0}$ we have the following error estimate
\begin{equation} \label{eq:third-convergence-theorem-CMS-a-priori-estimate}
d(x_n, \xi) \preceq \frac{\theta^n \lambda^{S_n(r)}}{1 - \theta \, \lambda^{r^n}} \, \gamma\left(E(x_0) \lambda^{S_n(r)}\right) F(x_0),
\end{equation}
where
$\lambda = \phi(E(x_0))$, $\theta  = \psi(E(x_0))$, ${\phi \colon J \to [0,1]}$ is a nondecreasing function satisfying
\eqref{eq:standard-condition-phi} and $\psi \colon J \to {\Rset}_+$ is a function satisfying \eqref{eq:standard-condition-psi}.
\item
\textsc{First a posteriori estimate}. For all ${n \ge 0}$ we have the following error estimate
\begin{equation} \label{eq:third-convergence-theorem-CMS-a-posteriori-estimate-1}
d(x_n, \xi) \preceq \frac{\gamma(E(x_n))}{1 - \beta(E(x_n))} \, F(x_n).
\end{equation}
\item
\textsc{Second a posteriori estimate}. For all ${n \ge 0}$ we have the following error estimate
\begin{equation} \label{eq:third-convergence-theorem-CMS-a-posteriori-estimate-2}
d(x_{n+1}, \xi) \preceq \frac{\theta_n \lambda_n}{1 - \theta_n (\lambda_n)^r} \, \gamma(E(x_{n + 1})) \, F(x_n) .
\end{equation}
where ${\lambda_n = \phi(E(x_n))}$ and ${\theta_n = \psi(E(x_n))}$.
\item \textsc{Some other estimates}. For all ${n \ge 0}$ we have
\begin{equation} \label{eq:third-convergence-theorem-CMS-some-other-estimates-1}
F(x_{n + 1}) \preceq \theta \, \lambda^{r^n} \, F(x_n)
\quad\text{and}\quad 
F(x_n) \preceq  \theta^n \, \lambda^{S_n(r)} \, F(x_0).
\end{equation}
\item \textsc{Existence of a fixed point}. If $\xi \in D$ and $T$ is continuous at $\xi$, then $\xi$ is a fixed point of $T$.
\end{enumerate}
\end{theorem}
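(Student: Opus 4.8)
The plan is to follow the template of the proof of Theorem~\ref{thm:first-convergence-theorem-CMS}, but to exploit the stronger hypotheses \eqref{eq:standard-condition-beta-1} and \eqref{eq:standard-condition-beta-2} through Proposition~\ref{prop:convergence-function-properties} in order to obtain the sharper, $r$-order estimates. Statement~(v) is immediate: it is exactly the second conclusion \eqref{eq:convergence-function-property-2} of Proposition~\ref{prop:convergence-function-properties}, which also supplies the two ingredients $F(x_j) \preceq \theta^j\lambda^{S_j(r)}F(x_0)$ and (via Proposition~\ref{prop:initial-point-properties}) $E(x_j) \le E(x_0)\lambda^{S_j(r)}$ that drive everything else. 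Throughout, every iterate $x_n$ is again an initial point of $T$, so these bounds may be applied at any index, and in particular $x_n$ may be used as a fresh starting point.

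First I would establish the key Cauchy estimate. For $m \ge n$, the triangle inequality together with \eqref{eq:convergence-function-CMS-condition-2} gives $d(x_n,x_m) \preceq \sum_{j=n}^{m-1}\gamma(E(x_j))F(x_j)$. Since $\gamma$ is nondecreasing and $\lambda \in [0,1]$, monotonicity of $S_j(r)$ in $j$ yields $\gamma(E(x_j)) \le \gamma(E(x_0)\lambda^{S_j(r)}) \le \gamma(E(x_0)\lambda^{S_n(r)})$ for every $j \ge n$; combined with $F(x_j) \preceq \theta^j\lambda^{S_j(r)}F(x_0)$ this reduces matters to bounding the scalar tail $\sum_{j\ge n}\theta^j\lambda^{S_j(r)}$. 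Writing $a_j = \theta^j\lambda^{S_j(r)}$ and using $S_{j+1}(r) = S_j(r) + r^j$, the ratio $a_{j+1}/a_j = \theta\lambda^{r^j}$ is nonincreasing in $j$ (because $r^j \ge r^n$ and $\lambda \le 1$), so $a_{n+k} \le a_n(\theta\lambda^{r^n})^k$. As $\theta\lambda^{r^n} \le \theta\lambda = \beta(E(x_0)) < 1$, summing the geometric series gives $\sum_{j\ge n}a_j \le \theta^n\lambda^{S_n(r)}/(1-\theta\lambda^{r^n})$, whence $d(x_n,x_m) \preceq b_n$ with $b_n$ equal to the right-hand side of \eqref{eq:third-convergence-theorem-CMS-a-priori-estimate}. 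Since $b_n \to 0$ in $Y$ (the scalar coefficient is dominated by $h^n/(1-h) \to 0$ while $\gamma(E(x_0)\lambda^{S_n(r)})F(x_0) \preceq \gamma(E(x_0))F(x_0)$ stays bounded), Proposition~\ref{prop:properties-Cauchy-CMS} shows $(x_n)$ is Cauchy; completeness provides a limit $\xi$, and part~(ii) of the same proposition yields $d(x_n,\xi) \preceq b_n$, which is precisely the a priori estimate~(ii). Evaluating at $n=0$, where $S_0(r)=0$ and $r^0=1$ so that $\theta\lambda^{r^0}=\beta(E(x_0))$, gives $b_0 = \rho$, which proves the localization asserted in~(i).

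The first a posteriori estimate~(iii) follows by the same device as in Theorem~\ref{thm:first-convergence-theorem-CMS}: the $n=0$ case of~(ii) holds for an arbitrary initial point, and since $x_n$ is itself an initial point whose Picard orbit is the tail of $(x_k)$ and hence converges to the same $\xi$, applying that case with $x_n$ in the role of $x_0$ yields~(iii). For the second a posteriori estimate~(iv) I would start from~(iii) at index $n+1$ and then bound the two factors depending on $x_{n+1}$ by quantities at $x_n$: condition \eqref{eq:convergence-function-CMS-condition-1} gives $F(x_{n+1}) \preceq \beta(E(x_n))F(x_n) = \theta_n\lambda_n F(x_n)$, while applying the chain \eqref{eq:property-beta} with $x_n$ as starting point and $x_{n+1}$ as its first iterate gives $\beta(E(x_{n+1})) \le \theta_n(\lambda_n)^r$, so that $1-\beta(E(x_{n+1})) \ge 1-\theta_n(\lambda_n)^r > 0$; substituting both into~(iii) produces~(iv). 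Finally, statement~(vi) is the standard continuity argument: $x_n \to \xi \in D$ and continuity of $T$ at $\xi$ give $x_{n+1} = Tx_n \to T\xi$, and uniqueness of limits forces $T\xi = \xi$.

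I expect the main obstacle to be the sharp geometric-tail bound underlying the a priori estimate — specifically, checking that the super-geometric growth $r^j \ge r^n$ makes the ratios $\theta\lambda^{r^j}$ nonincreasing and collapses the tail $\sum_{j\ge n}\theta^j\lambda^{S_j(r)}$ into the closed form $\theta^n\lambda^{S_n(r)}/(1-\theta\lambda^{r^n})$, together with the bookkeeping that identifies this bound with \eqref{eq:third-convergence-theorem-CMS-a-priori-estimate} and reduces it to $\rho$ at $n=0$. The remaining parts are routine adaptations of Theorem~\ref{thm:first-convergence-theorem-CMS} and Proposition~\ref{prop:convergence-function-properties}.
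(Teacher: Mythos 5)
Your proposal is correct, but it organizes the argument differently from the paper. The paper obtains (i), (iii) and (vi) by directly invoking Theorem~\ref{thm:first-convergence-theorem-CMS} (whose hypotheses hold here, since a gauge function of order $r\ge 1$ satisfies \eqref{eq:gauge-function-of-high-order-second-condition}), and then deduces the a priori estimate (ii) \emph{from} the a posteriori estimate (iii) by substituting the three bounds $\gamma(E(x_n))\le\gamma\left(E(x_0)\lambda^{S_n(r)}\right)$ (Proposition~\ref{prop:initial-point-properties}), $\beta(E(x_n))\le\theta\lambda^{r^n}$ (inequality \eqref{eq:property-beta}), and $F(x_n)\preceq\theta^n\lambda^{S_n(r)}F(x_0)$ (Proposition~\ref{prop:convergence-function-properties}) — a three-line substitution that requires no new Cauchy analysis. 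You instead re-derive the Cauchy estimate from scratch, summing the super-geometric tail $\sum_{j\ge n}\theta^j\lambda^{S_j(r)}$ via the nonincreasing ratios $\theta\lambda^{r^j}\le\theta\lambda^{r^n}\le\theta\lambda=\beta(E(x_0))<1$ to get $d(x_n,x_m)\preceq b_n$ with $b_n$ equal to the right-hand side of \eqref{eq:third-convergence-theorem-CMS-a-priori-estimate}, and then invoke Proposition~\ref{prop:properties-Cauchy-CMS}. Both routes are sound, and your tail computation is verifiably correct (including the $n=0$ bookkeeping $S_0(r)=0$, $r^0=1$ giving $b_0=\rho$). What the paper's route buys is brevity: (ii) falls out of (iii) with no fresh limiting argument. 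What yours buys is slightly more information and self-containedness: you get the sharp bound $d(x_n,x_m)\preceq b_n$ uniformly in $m$ \emph{before} passing to the limit (the paper's route only yields the bound on $d(x_n,\xi)$), and you do not need Theorem~\ref{thm:first-convergence-theorem-CMS} as a black box. Your treatment of (iv) — combining (iii) at index $n+1$ with $F(x_{n+1})\preceq\beta(E(x_n))F(x_n)=\theta_n\lambda_n F(x_n)$ and $\beta(E(x_{n+1}))\le\theta_n(\lambda_n)^r$ — is in substance identical to the paper's, which starts from \eqref{eq:first-convergence-theorem-CMS-a-posteriori-estimate-2} and proves the same bound on $\beta(E(x_{n+1}))$ via quasi-homogeneity of $\beta$ of degree $r-1$; your appeal to \eqref{eq:property-beta} with $x_n$ as starting point encapsulates exactly that computation. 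Parts (v) and (vi) coincide with the paper's.
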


\begin{proof}
Conclusions (i), (iii) and (vi)
follow immediately from Theorem~\ref{thm:first-convergence-theorem-CMS}.
Conclusion (ii) follows from \eqref{eq:third-convergence-theorem-CMS-a-posteriori-estimate-1}, 
Proposition~\ref{prop:initial-point-properties}, 
Proposition~\ref{prop:convergence-function-properties} and inequality \eqref{eq:property-beta}.
Conclusion (v) follows from Proposition~\ref{prop:convergence-function-properties}.
It remains to prove (iv).
It follows from \eqref{eq:standard-condition-psi} that 
\begin{equation} \label{eq:beta-product-1}
\beta(E(x_n)) = \theta_n \, \lambda_n .
\end{equation}
On the other hand, taking into account that ${E(x_{n+1}) \le \varphi(E(x_n))}$ and that $\beta$ is quasi-homogeneous 
of degree ${r-1}$ on $J$, we get
\begin{equation} \label{eq:beta-product-2}
\beta(E(x_{n+1}) \le \beta(\varphi(E(x_n)) = \beta(\lambda_n \, E(x_n)) \le (\lambda_n)^{r-1} \beta(E(x_n)) = \theta_n (\lambda_n)^r .
\end{equation}
Now conclusion (iv) follows from \eqref{eq:first-convergence-theorem-CMS-a-posteriori-estimate-2}, \eqref{eq:beta-product-1} and 
\eqref{eq:beta-product-2}.
\end{proof}

\begin{definition}[Iterated Contraction \cite{Pro10}]  \label{def:iterated-contraction-E}
Let ${T \colon D \subset X \to X}$ be an operator of a cone metric space ${(X,d)}$ over a solid vector space ${(Y,\preceq)}$, 
and let ${E \colon D \to {\Rset}_+}$ be a function of initial conditions of $T$ with a gauge function on an interval $J$.
Then $T$ is said to be an \emph{iterated contraction with respect to} $E$ (with control function $\beta$) if 
\[ 
d(Tx,T^2x) \preceq \beta(E(x)) \, d(x,Tx)
\quad\text{for all } x \in D \text{ with } x \in D \text{ and } E(x) \in J,
\]
where ${\beta \colon J \to [0,1)}$ is a nondecreasing function. 
\end{definition}

Setting ${F(x) = d(x,Tx)}$ in Theorem~\ref{thm:third-convergence-theorem-CMS}, we get the following convergence result for iterated contractions, which extends Theorem~6.5 of \cite{Pro10}.

\begin{corollary} 
\label{cor:third-convergence-theorem-CMS}
Let ${T \colon D \subset X \to X}$ be an operator of a cone metric space ${(X,d)}$ over a solid vector space
${(Y,\preceq)}$, and let ${E \colon D \to {\Rset}_+}$ be a function of initial conditions of $T$ with a gauge function $\varphi$ 
of order $r$ on an interval $J$.
If $T$ is an iterated contraction with respect to $E$ with control function $\beta$ satisfying \eqref{eq:standard-condition-beta-1} and \eqref{eq:standard-condition-beta-2}, then the following statements hold true:
\begin{enumerate}[(i)]
\item
\textsc{Convergence}. Starting from any initial point $x_0$ of $T$, the Picard iteration \eqref{eq:Picard-iteration} is well-defined, 
remains in the closed ball ${\overline{U}(x_0,\rho)}$ and converges to a point ${\xi \in \overline{U}(x_0,\rho)}$, where
\[
\rho = \frac{d(x_0,Tx_0)}{1 - \beta(E(x_0))} \, .
\]
\item
\textsc{A priori estimate}. For all ${n \ge 0}$ we have the following error estimate
\[
d(x_n, \xi) \preceq \frac{\theta^n \lambda^{S_n(r)}}{1 - \theta \, \lambda^{r^n}} \,  d(x_0,Tx_0),
\]
where
$\lambda = \phi(E(x_0))$, $\theta  = \psi(E(x_0))$, ${\phi \colon J \to [0,1]}$ is a nondecreasing function satisfying
\eqref{eq:standard-condition-phi} and $\psi \colon J \to {\Rset}_+$ is a function satisfying \eqref{eq:standard-condition-psi}.
\item
\textsc{First a posteriori estimate}. For all ${n \ge 0}$ we have the following error estimate
\[
d(x_n, \xi) \preceq \frac{d(x_n,x_{n+1})}{1 - \beta(E(x_n))} \, .
\]
\item
\textsc{Second a posteriori estimate}. For all ${n \ge 0}$ we have the following error estimate
\[
d(x_{n+1}, \xi) \preceq \frac{\theta_n \lambda_n}{1 - \theta_n (\lambda_n)^r} \, d(x_n,x_{n+1}).
\]
where ${\lambda_n = \phi(E(x_n))}$ and ${\theta_n = \psi(E(x_n))}$.
\item 
\textsc{Some other estimates}. For all ${n \ge 0}$ we have
\[
d(x_{n+1},x_{n+2}) \preceq \theta \, \lambda^{r^n} \, d(x_n,x_{n+1})
\,\, \text{ and } \,\, 
d(x_n,x_{n+1}) \preceq  \theta^n \, \lambda^{S_n(r)} \, d(x_0,x_1).
\]
\item \textsc{Existence of a fixed point}. 
If ${\overline{U}(x_0,\rho)} \subset D$ and $T$ is continuous, then $\xi$ is a fixed point of $T$.
\end{enumerate}
\end{corollary}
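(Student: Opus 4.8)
The plan is to deduce this corollary directly from Theorem~\ref{thm:third-convergence-theorem-CMS} via the choice of convergence function $F(x) = d(x,Tx)$, exactly as announced in the sentence preceding the statement. First I would define $F \colon D \to Y_+$ by $F(x) = d(x,Tx)$; this is well-defined and lands in the positive cone since the cone metric takes its values in $Y_+$.

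Next I would verify that $F$ is a convergence function of $T$ in the sense of Definition~\ref{def:convergence-function-CMS}, with the \emph{same} control function $\beta$ and with the constant control function $\gamma \equiv 1$. Condition~\eqref{eq:convergence-function-CMS-condition-1} reads $F(Tx) = d(Tx,T^2x) \preceq \beta(E(x)) \, d(x,Tx) = \beta(E(x)) \, F(x)$, which is precisely the iterated-contraction hypothesis of Definition~\ref{def:iterated-contraction-E}. Condition~\eqref{eq:convergence-function-CMS-condition-2} reads $d(x,Tx) \preceq \gamma(E(x)) \, F(x)$, and this holds trivially as an equality when $\gamma \equiv 1$; moreover this constant $\gamma$ is a nondecreasing map $J \to \Rset_+$, as required by Definition~\ref{def:convergence-function-CMS}. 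Since $\beta$ is assumed to satisfy \eqref{eq:standard-condition-beta-1} and \eqref{eq:standard-condition-beta-2} and $\varphi$ is a gauge function of order $r$ on $J$, all hypotheses of Theorem~\ref{thm:third-convergence-theorem-CMS} are then in force.

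With this identification in hand, I would simply read off the six conclusions of that theorem after substituting $F(x) = d(x,Tx)$ and $\gamma \equiv 1$, noting in particular that $F(x_n) = d(x_n, Tx_n) = d(x_n, x_{n+1})$ and $F(x_0) = d(x_0, Tx_0)$. Under these substitutions the radius $\rho$, the a priori estimate~(ii), the two a posteriori estimates~(iii)--(iv), and the monotonicity estimates~(v) collapse to exactly the displayed formulas; in particular the factor $\gamma\!\left(E(x_0)\lambda^{S_n(r)}\right)$ appearing in the theorem's a priori bound becomes $1$.

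I expect no genuine obstacle, since the whole content is the two-line verification above (the first condition being a mere restatement of the hypothesis, the second trivial). The only point needing a small remark is conclusion~(vi): Theorem~\ref{thm:third-convergence-theorem-CMS}(vi) assumes $\xi \in D$ together with continuity of $T$ at $\xi$, whereas the corollary assumes $\overline{U}(x_0,\rho) \subset D$ and continuity of $T$. I would bridge this by invoking conclusion~(i), which already places the limit in $\overline{U}(x_0,\rho) \subset D$, so that $\xi \in D$; global continuity of $T$ then yields continuity at $\xi$, and the theorem delivers $T\xi = \xi$.
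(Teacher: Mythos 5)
Your proposal is correct and follows exactly the paper's route: the paper obtains this corollary precisely by setting $F(x) = d(x,Tx)$ (with $\gamma \equiv 1$) in Theorem~\ref{thm:third-convergence-theorem-CMS}, which you verify in full, including the observation that the iterated-contraction hypothesis is literally condition \eqref{eq:convergence-function-CMS-condition-1} for this $F$. Your extra remark on conclusion~(vi) --- that $\xi \in \overline{U}(x_0,\rho) \subset D$ by (i), so global continuity of $T$ suffices --- is a detail the paper leaves implicit, and it is handled correctly.
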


\begin{remark}
Each of the theorems and corollaries of this section is a generalization of the Banach Contraction Principle \cite{Ban22} 
(see also \cite{Pro13})
as well as of the Iterated contraction principle given in \cite[Chap.~12]{OR70} and \cite{Pro13}.
For example, Corollary~\ref{cor:third-convergence-theorem-CMS} with ${E(x) = d(x,Tx)}$, ${\varphi(t) \equiv \lambda \, t}$ and ${\beta(t) \equiv \lambda}$, where ${\lambda \in [0,1)}$, yields Theorem~10.1 of \cite{Pro13}.   
\end{remark}

%
%

\section{Some inequalities in $\Kset^n$}
\label{sec:Some-inequalities}

In this and the next sections, we use the following notations and conventions.
The vector space $\Kset^n$ is equipped with the $p$-norm
\begin{equation} \label{eq:p-norm}
\|x\|_p = \left( \sum _{i = 1} ^n |x_i|^p \right) ^{1/p}   \quad\text{for some } 1 \le p \le \infty.
\end{equation}
The Banach space ${(\Rset^n,\|\cdot\|_p)}$ is equipped with the coordinate-wise ordering $\preceq$ defined by
\begin{equation} \label{eq:coordinate-wise-ordering}
x \preceq y  \quad\text{if and only if}\quad x_i \le y_i \,\, \text{ for each } \,\, i \in I_n \, .
\end{equation}
Here and in what follows, we denote by $I_n$ the set of indices ${1, \ldots, n}$, i.e. ${I_n = \{1, \ldots, n\}}$.
It is easy to see that ${(\Rset^n,\|\cdot\|_p, \preceq)}$ is a solid vector space.
We define the map ${\|\cdot\| \colon \Kset^n \to \Rset^n }$ by
\begin{equation} \label{eq:cone-norm-in-K^n}
\|x\| = (|x_1|,\ldots,|x_n|)
\end{equation}
Then ${(\Kset^n,\|\cdot\|)}$ is a cone normed space over $\Rset^n$.

Furthermore, for two vectors ${x \in \Kset^n}$ and ${y \in \Rset^n}$ we denote by ${\displaystyle \frac{x}{y}}$ a vector in ${\Rset^n}$ defined by
\begin{equation} \label{eq:quotient-of-vectors}
\frac{x}{y} = \left( \frac{|x_1|}{y_1},\cdots,\frac{|x_n|}{y_n} \right)
\end{equation}
provided that $y$ has only nonzero components.

We use the function ${d \colon \Kset^n \to \Rset^n}$ defined by ${d(x) = (d_1(x),\ldots,d_n(x))}$ with
\begin{equation} \label{eq:function-d}
d_i(x) = \min_{j \ne i} |x_i - x_j| \qquad (i = 1,\ldots,n).
\end{equation}

\begin{proposition}[\cite{PC14a}] \label{prop:inequality-1}
Let ${u,v \in \Kset^n}$ and ${1 \le p \le \infty}$. If the vector $v$ has distinct components, then for all ${i,j \in I_n}$ the following two inequalities hold: 
\begin{enumerate}[(i)]
	\item 
$|u_i - u_j| \ge \left( 1 - 2^{1/q} \displaystyle\left\| \frac{u - v}{d(v)} \right\|_p \right) |v_i - v_j|$,
	\item
$|u_i - v_j| \ge \left( 1 - \displaystyle\left\| \frac{u - v}{d(v)} \right\|_p \right) |v_i - v_j|$.
\end{enumerate}
\end{proposition}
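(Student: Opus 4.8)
The plan is to derive both inequalities from the triangle inequality combined with one elementary observation: each discrepancy $|u_k - v_k|$ factors as $\frac{|u_k - v_k|}{d_k(v)}\, d_k(v)$, and because $v$ has distinct components, $d_k(v) = \min_{l \ne k}|v_k - v_l| \le |v_i - v_j|$ whenever $k \in \{i,j\}$ and $i \ne j$. The diagonal case $i = j$ is trivial in both parts, since the right-hand sides vanish, so I would assume $i \ne j$ throughout. Abbreviating $E = \left\|\frac{u-v}{d(v)}\right\|_p$ and writing $a = \frac{|u_i - v_i|}{d_i(v)}$ and $b = \frac{|u_j - v_j|}{d_j(v)}$ for the two relevant coordinates, the factorization gives $|u_i - v_i| \le a\,|v_i - v_j|$ and $|u_j - v_j| \le b\,|v_i - v_j|$.

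Part (ii) would then follow immediately: $|u_i - v_j| \ge |v_i - v_j| - |u_i - v_i| \ge (1 - a)|v_i - v_j|$, and since each coordinate of a vector is bounded by its $p$-norm for every $1 \le p \le \infty$, we have $a \le E$, which is exactly the claimed estimate.

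For part (i), the triangle inequality gives $|u_i - u_j| \ge |v_i - v_j| - |u_i - v_i| - |u_j - v_j| \ge \left(1 - (a+b)\right)|v_i - v_j|$, so everything reduces to showing $a + b \le 2^{1/q}E$, where $q$ is the conjugate exponent determined by $1/p + 1/q = 1$. This is the single step that carries genuine content. I would obtain it from Hölder's inequality applied to the vectors $(1,1)$ and $(a,b)$, namely $a + b \le \|(1,1)\|_q\,\|(a,b)\|_p = 2^{1/q}(a^p + b^p)^{1/p}$, and then note that $a^p + b^p$ is a partial sum of the $p$-th powers defining $E^p$, whence $(a^p + b^p)^{1/p} \le E$. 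Combining the two bounds yields $a + b \le 2^{1/q}E$ and hence inequality (i).

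The only point warranting a second look is the behaviour of this Hölder bound at the endpoints: for $p = \infty$ one has $q = 1$ and the estimate reads $a + b \le 2\max(a,b)$, while for $p = 1$ one has $2^{1/q} = 1$ and the inequality degenerates to the identity $a + b = a + b$. Both are consistent with the constant $2^{1/q}$ in the statement, so no separate treatment of the extreme norms is required.
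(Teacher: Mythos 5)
Your proof is correct: since $v$ has distinct components each $d_k(v)>0$, the observation $d_k(v)\le |v_i-v_j|$ for $k\in\{i,j\}$, $i\ne j$, is valid, and your key step $a+b\le 2^{1/q}\,(a^p+b^p)^{1/p}\le 2^{1/q}\,E$ is exactly the two-term power mean inequality $M_1\le M_p$ (equivalently H\"older against $(1,1)$), which behaves correctly at both endpoints $p=1$ and $p=\infty$ as you checked. The paper states this proposition without proof, citing \cite{PC14a}, and your argument coincides in substance with the standard one---it is the same device this paper itself deploys in the proof of Proposition~\ref{prop:localization}, where the bound $\frac{|\Phi_i(x)|}{d_i(x)}+\frac{|\Phi_j(x)|}{d_j(x)}\le 2^{1/q}E(x)$ is likewise obtained from $M_1\le M_p$.
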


The next two lemmas are immediate consequences of the previous one.

\begin{proposition} \label{prop:inequality-2}
Let ${u,v \in {\Kset}^n}$ and ${1 \le p \le \infty}$.
If the vector $v$ has distinct components, then
\[
d(u) \succeq \left( 1 - 2^{1/q} \left\| \frac{u - v}{d(v)} \right\|_p \right) d(v).
\]
\end{proposition}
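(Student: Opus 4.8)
The plan is to reduce the asserted vector inequality to its coordinates and then read off the conclusion from Proposition~\ref{prop:inequality-1}(i). Abbreviating the scalar factor as $\alpha = 1 - 2^{1/q} \bigl\| (u-v)/d(v) \bigr\|_p$, by the definition of the coordinate-wise ordering in \eqref{eq:coordinate-wise-ordering} the claim $d(u) \succeq \alpha \, d(v)$ is exactly the family of scalar inequalities $d_i(u) \ge \alpha \, d_i(v)$ for every $i \in I_n$. (Note that the quotient, and hence $\alpha$, is well-defined because the hypothesis that $v$ has distinct components forces $d_i(v) = \min_{j \ne i} |v_i - v_j| > 0$.) So I would fix an index $i$ and work to establish this single scalar inequality.

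First I would record that $d_i(u) = \min_{j \ne i} |u_i - u_j|$ is a minimum of nonnegative quantities, so $d_i(u) \ge 0$. This observation lets me dispose of the case $\alpha \le 0$ at once: there $\alpha \, d_i(v) \le 0 \le d_i(u)$, and the desired inequality holds trivially.

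In the remaining case $\alpha > 0$, I would invoke Proposition~\ref{prop:inequality-1}(i), which supplies $|u_i - u_j| \ge \alpha \, |v_i - v_j|$ for every $j \ne i$. Since $|v_i - v_j| \ge d_i(v)$ and $\alpha \ge 0$, this gives $|u_i - u_j| \ge \alpha \, d_i(v)$ for each $j \ne i$, and taking the minimum over $j \ne i$ yields $d_i(u) \ge \alpha \, d_i(v)$. Combining the two cases gives the coordinate inequalities for all $i$, hence the stated vector inequality.

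I do not expect a genuine obstacle, which is consistent with the claim being an immediate consequence of the preceding proposition. The one point demanding a moment's care is the sign of $\alpha$: the factor is \emph{not} assumed nonnegative, and the step of pulling the minimum through $\alpha$ is legitimate only when $\alpha \ge 0$, so the nonnegativity of $d_i(u)$ is exactly what handles the complementary case.
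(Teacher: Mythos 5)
Your proof is correct and follows the paper's intended route: the paper offers no separate argument, stating only that this proposition is an immediate consequence of Proposition~\ref{prop:inequality-1}, and your coordinatewise reduction via part (i) of that proposition is precisely that deduction. Your extra care with the sign of the factor $\alpha = 1 - 2^{1/q}\,\|(u-v)/d(v)\|_p$, disposing of the case $\alpha \le 0$ via $d_i(u) \ge 0$ before pulling the minimum through, correctly fills in the one detail the paper leaves implicit.
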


\begin{proposition} \label{prop:distinct-components}
Let $u,v \in {\Kset}^n$ and $1 \le p \le \infty$. If the vector $v$ has distinct components and
\[
\left\| \frac{u - v}{d(v)} \right\|_p  < \frac{1}{2^{1/q}} \, ,
\]
then the vector $u$ also has distinct components.
\end{proposition}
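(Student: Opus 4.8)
The plan is to reduce the claim to a positivity statement for each pair of coordinates and then read it off directly from Proposition~\ref{prop:inequality-1}(i). Recall that $u$ having distinct components means precisely that $u_i \ne u_j$, equivalently $|u_i - u_j| > 0$, for every pair of distinct indices ${i,j \in I_n}$. So the strategy is: fix an arbitrary pair ${i \ne j}$, bound $|u_i - u_j|$ from below using the already-established inequality, and verify that this lower bound is strictly positive under the hypothesis.

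First I would observe that since $v$ has distinct components, the vector $d(v)$ defined in \eqref{eq:function-d} has only nonzero components, namely ${d_i(v) = \min_{j \ne i} |v_i - v_j| > 0}$ for each $i$; this guarantees that the quotient ${(u-v)/d(v)}$ in \eqref{eq:quotient-of-vectors} is well-defined, so the hypothesis makes sense. Next, for fixed distinct ${i,j}$, Proposition~\ref{prop:inequality-1}(i) yields
\[
|u_i - u_j| \ge \left( 1 - 2^{1/q} \left\| \frac{u - v}{d(v)} \right\|_p \right) |v_i - v_j|.
\]
The hypothesis ${\left\| (u-v)/d(v) \right\|_p < 2^{-1/q}}$ is exactly the statement that ${2^{1/q} \left\| (u-v)/d(v) \right\|_p < 1}$, so the first factor on the right is strictly positive. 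Since $v$ has distinct components, the second factor ${|v_i - v_j|}$ is strictly positive as well. Hence the product is strictly positive, giving ${|u_i - u_j| > 0}$, i.e. ${u_i \ne u_j}$.

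As the pair ${i \ne j}$ was arbitrary, all components of $u$ are pairwise distinct, which is the desired conclusion. There is essentially no obstacle here: the entire content of the proposition is packaged in the lower bound of Proposition~\ref{prop:inequality-1}(i), and the only point requiring a moment's attention is confirming that the quotient appearing in the hypothesis is well-defined, which follows immediately from the distinctness of the components of $v$. This is why the statement is correctly described in the text as an immediate consequence of Proposition~\ref{prop:inequality-1}.
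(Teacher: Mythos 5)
Your proof is correct and follows exactly the route the paper intends: the paper gives no separate argument, stating only that Proposition~\ref{prop:distinct-components} is an immediate consequence of Proposition~\ref{prop:inequality-1}, and your application of part (i) together with the strict positivity of each factor is precisely that consequence, with the well-definedness remark a sensible added check.
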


Given ${-\infty \le r \le \infty}$, we define the power mean function ${M_r \colon \Kset^n \to \Rset^n}$ by
\[
M_r(x) = \left( \frac{1}{n} \sum_{i=1}^n {|x_i|^r} \right)^{1 / r} .
\]
The value of $M_r(x)$ for ${r = 0,\pm\infty}$ is assumed to be the limit as ${r \to 0,\pm\infty}$.

\begin{proposition}[Power Mean Inequality] \label{prop:power-mean-inequality}
If ${-\infty \le r < s \le \infty}$, then 
\[
M_r(x) \le M_s(x) \quad\mbox{for every}\quad x \in \Kset^n,
\]
and the equality holds only if all the components of $x$ are equal to each other. 
\end{proposition}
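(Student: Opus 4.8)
The plan is to observe first that $M_r(x)$ depends on $x$ only through the nonnegative reals $a_i = |x_i|$, so the statement is exactly the classical monotonicity of the power means $r \mapsto M_r(x)$ of the numbers $a_1,\dots,a_n \ge 0$. The genuine content sits in the case of two finite positive exponents, which I would establish first by a convexity (Jensen) argument and then use as a hub to which all remaining cases are reduced.

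\textbf{Main case.} For $0 < r < s < \infty$ I would set $b_i = a_i^{\,r}$ and consider $\phi(t) = t^{s/r}$ on $[0,\infty)$. Since $s/r > 1$, the function $\phi$ is convex, and strictly convex on $(0,\infty)$. Jensen's inequality applied to the arithmetic mean of the $b_i$ gives
\[
\left( \frac{1}{n} \sum_{i=1}^n b_i \right)^{s/r} \le \frac{1}{n} \sum_{i=1}^n b_i^{\,s/r}.
\]
The left-hand side is $M_r(x)^s$ and the right-hand side is $M_s(x)^s$, so taking $s$-th roots yields $M_r(x) \le M_s(x)$. Strict convexity of $\phi$ makes the Jensen step an equality precisely when all $b_i$, hence all $a_i = |x_i|$, coincide, which settles the equality assertion in this regime.

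\textbf{Remaining cases.} These I would dispatch by standard reductions to the main case. The geometric mean $M_0(x)$ is handled by Jensen for the strictly concave logarithm: from $\log\bigl(\tfrac1n\sum a_i^{\,s}\bigr) \ge \tfrac1n\sum \log a_i^{\,s}$ one gets $M_0(x) \le M_s(x)$ for $s>0$, and the analogous inequality gives $M_r(x) \le M_0(x)$ for $r<0$. Negative exponents $r<s<0$ reduce to the main case by replacing each $a_i$ with $1/a_i$ (valid when all $a_i>0$; if some $a_i=0$ then $M_r(x)=0$ for every $r\le 0$ and the inequality is trivial). Mixed signs $r<0<s$ follow by chaining through $M_0(x)$. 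Finally, the extreme exponents are governed by $M_{-\infty}(x)=\min_i a_i$ and $M_{+\infty}(x)=\max_i a_i$, and the bounds $\min_i a_i \le M_t(x) \le \max_i a_i$ for finite $t$ are immediate from the definition; alternatively the three boundary exponents $0,\pm\infty$ can be absorbed by passing to the limit in the interior inequality, using the continuity of $r\mapsto M_r(x)$.

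\textbf{Main obstacle.} The only real mathematical step is the convexity/Jensen estimate; the effort that demands care is the boundary and degenerate bookkeeping, namely the exponents $r,s\in\{0,\pm\infty\}$ and the possibility that some $a_i=0$, where zero and negative powers must be read as $+\infty$ so that $M_r(x)=0$. Propagating the equality condition through these reductions is the delicate part: one must confirm in each regime that $M_r(x)=M_s(x)$ forces $|x_1|=\cdots=|x_n|$. For interior exponents this is the strictness in Jensen, while for the extreme exponents it is immediate, since $\min_i a_i=\max_i a_i$ already means all $a_i$ are equal.
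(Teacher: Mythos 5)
The paper offers no proof of this proposition: it is stated as classical background and is invoked only through the instances $M_0 \le M_1$ and $M_1 \le M_p$ ($p \ge 1$) in Proposition~\ref{prop:product-lemma} and through $M_1 \le M_p$ in the proof of Proposition~\ref{prop:localization}, so there is no in-paper argument to compare yours against. Your proof is the standard textbook route, and for the inequality itself it is complete and correct in every regime: the reduction to $a_i = |x_i|$, Jensen with the strictly convex $t \mapsto t^{s/r}$ for $0 < r < s < \infty$, the logarithm for $M_0$, the reciprocal substitution $a_i \mapsto 1/a_i$ for $r < s < 0$ (with the zero-component case correctly dismissed as trivial), chaining through $M_0$ for mixed signs, and the bounds $\min_i a_i \le M_t(x) \le \max_i a_i$ at the extremes.

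One caveat concerns the equality assertion, exactly in the corner you flag as delicate: there your promised bookkeeping cannot succeed, because the characterization is false as stated. If $r < s \le 0$ (including $r = -\infty$) and some $a_i = 0$ while others are positive, then $M_r(x) = M_s(x) = 0$ with unequal components --- take $x = (0,1)$, $r = -1$, $s = -\tfrac12$. So the equality condition holds only when $s > 0$ or when all components are nonzero; this is a defect of the proposition as literally stated (hence inherited, not introduced, by your proof), and it is immaterial to the paper, which uses only exponents $\ge 0$ and never uses the equality case. Two smaller points: your ``alternative'' of recovering the boundary exponents $0, \pm\infty$ by continuity yields the inequality but not the equality condition, since passing to the limit destroys strictness, so the direct arguments you give first are the ones that must be retained; and your remark that equality at the extremes is immediate because $\min_i a_i = \max_i a_i$ forces equality covers only the pair $(-\infty, +\infty)$ --- for a pair such as $(-\infty, s)$ with $s$ finite and positive components one still needs the (easy, one-line) observation that $\tfrac1n \sum_i a_i^s = (\min_i a_i)^s$ forces all $a_i$ equal.
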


The next proposition can easily be proved by the power mean inequalities $M_0 \le M_1$ and $M_1 \le M_p$ for $p \ge 1$.

\begin{proposition}[\cite{PC14a,PP13}] \label{prop:product-lemma}
Let $u \in {\Kset}^n$ and ${1 \le p \le \infty}$. Then
\[
\left|\prod_{i=1}^n (1 + u_i) \right| \le \left( 1 + \frac{\|u\|_p}{n^{1 / p}} \right)^n
\quad\text{and}\quad
\left|\prod_{i=1}^n (1 + u_i) - 1 \right| \le \left( 1 + \frac{\|u\|_p}{n^{1 / p}} \right)^n - 1 .
\]
\end{proposition}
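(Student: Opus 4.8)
The plan is to pass from the valued field $\Kset$ down to the nonnegative reals by the triangle inequality, and then to apply exactly the two power mean inequalities flagged in the hint. First I would note that the absolute value on $\Kset$ is multiplicative and subadditive, so
\[
\left| \prod_{i=1}^n (1 + u_i) \right| = \prod_{i=1}^n |1 + u_i| \le \prod_{i=1}^n (1 + |u_i|).
\]
This reduces everything to bounding the real product $\prod_{i=1}^n (1 + |u_i|)$. Putting $a_i = 1 + |u_i| > 0$, the inequality $M_0 \le M_1$ (i.e.\ AM--GM) gives $\bigl(\prod_{i=1}^n a_i\bigr)^{1/n} = M_0(a) \le M_1(a) = \frac{1}{n}\sum_{i=1}^n (1 + |u_i|)$, whence
\[
\prod_{i=1}^n (1 + |u_i|) \le \left( 1 + \frac{1}{n}\sum_{i=1}^n |u_i| \right)^n.
\]

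Next I would recognize the inner average as a power mean of $u$ itself. Since $M_1(u) = \frac{1}{n}\sum_{i=1}^n |u_i|$ and $M_p(u) = \|u\|_p / n^{1/p}$, the inequality $M_1 \le M_p$ from Proposition~\ref{prop:power-mean-inequality} gives $\frac{1}{n}\sum_{i=1}^n |u_i| \le \|u\|_p / n^{1/p}$. Because the map $s \mapsto (1+s)^n$ is increasing on $\Rset_+$, chaining the three displayed estimates yields the first claimed inequality. The extreme case $p = \infty$ requires no change: then $n^{1/p} = 1$ and $M_p(u) = M_\infty(u) = \max_i |u_i|$, and $M_1 \le M_\infty$ is again covered by Proposition~\ref{prop:power-mean-inequality}.

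For the second inequality I would expand the product. Writing $\prod_{i=1}^n (1 + u_i) - 1 = \sum_{\emptyset \ne S \subseteq I_n} \prod_{i \in S} u_i$ (the empty subset contributing the $1$ that is subtracted off) and applying the triangle inequality termwise gives
\[
\left| \prod_{i=1}^n (1 + u_i) - 1 \right| \le \sum_{\emptyset \ne S \subseteq I_n} \prod_{i \in S} |u_i| = \prod_{i=1}^n (1 + |u_i|) - 1,
\]
where the final equality is the same expansion applied to the nonnegative reals $|u_i|$. Subtracting $1$ from the first-part bound $\prod_{i=1}^n (1 + |u_i|) \le (1 + \|u\|_p/n^{1/p})^n$ then closes the argument. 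I do not expect a real obstacle here; the only points demanding any care are the identification $M_p(u) = \|u\|_p/n^{1/p}$ and the combinatorial expansion of $\prod(1+u_i)-1$, both of which are routine.
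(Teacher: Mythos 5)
Your proof is correct and is exactly the argument the paper intends: the paper disposes of this proposition with the remark that it ``can easily be proved by the power mean inequalities $M_0 \le M_1$ and $M_1 \le M_p$ for $p \ge 1$,'' and your three steps --- reduction to $\prod_{i=1}^n (1+|u_i|)$ via multiplicativity and subadditivity of $|\cdot|$, AM--GM (that is, $M_0 \le M_1$) applied to $a_i = 1+|u_i|$, and $M_1(u) \le M_p(u) = \|u\|_p / n^{1/p}$ --- flesh out precisely that hint. Your subset expansion of $\prod_{i=1}^n(1+u_i)-1$, which transfers the bound to the second inequality, is the routine completion of the same approach, so nothing needs correcting.
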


Throughout the next sections, for a given ${1 \le p \le \infty}$, we always denote by $q$ the conjugate exponent of $p$, that is,
\begin{equation} \label{eq:conjugate-exponent}
1 \le q \le \infty \quad\text{and}\quad \frac{1}{p} + \frac{1}{q} = 1.
\end{equation}

%
%

\section{Local convergence of the first kind of the Weierstrass method}
\label{sec:Local-convergence-of-the-Weierstrass-method-I}

Let $f \in \Kset[z]$ be a polynomial of degree $n \ge 2$ which has $n$ simple zeros in $\Kset$ and $\xi$ be a root-vector of $f$.
In this section we study the convergence of the Weierstrass method \eqref{eq:Weierstrass-iteration} with respect to the function of initial conditions ${E \colon \Kset^n \to \Rset_+}$ defined by
\begin{equation} \label{eq:FIC1-SM}
E(x) = \left\| \frac{x - \xi}{d(\xi)} \right\|_p ,
\end{equation}
where $1 \le p \le \infty$. 
We prove that the Weierstrass iteration function $T$ defined by \eqref{Weierstrass-iteration-function} is an iterated contraction with respect to $E$ at the point $\xi$. 
As a result, we obtain a local convergence theorem with error estimates for the Weierstrass method, which improves the results of
Dochev \cite{Doc62b},
Kyurkchiev and Markov \cite{KM83}, 
Yakoubsohn \cite{Yak02} and
Proinov and Petkova \cite{PP13}.

\begin{lemma} \label{lem:FIC1-SM-properties}
Let ${f \in \Kset[z]}$ be a polynomial of degree ${n \ge 2}$ which splits in $\Kset$, ${\xi \in \Kset^n}$ be a root-vector of $f$, ${x \in \Kset^n}$ and ${1 \le p \le \infty}$. Then for ${i \ne j}$,
\[
|x_i - x_j| \ge (1 - 2^{1 / q} E(x)) \, d_j(\xi) \quad\text{and}\quad    |x_i - \xi_j| \ge (1 - E(x)) \, d_i(\xi),  
\]
where ${E \colon \Kset^n \to \Rset_+}$ is defined by \eqref{eq:FIC1-SM}. 
\end{lemma}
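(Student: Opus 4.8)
The plan is to derive both inequalities directly from Proposition~\ref{prop:inequality-1} applied with $u = x$ and $v = \xi$. The first thing to record is that the hypothesis of that proposition is met: since $E(x)$ is defined through the quotient \eqref{eq:quotient-of-vectors} with denominator $d(\xi)$, the vector $d(\xi)$ must have only nonzero components, which is exactly the statement that $\xi$ has distinct components. Moreover, by the very definitions \eqref{eq:quotient-of-vectors} and \eqref{eq:FIC1-SM} we have $\bigl\| (x-\xi)/d(\xi) \bigr\|_p = E(x)$, so Proposition~\ref{prop:inequality-1}(i)--(ii) read, for all $i,j \in I_n$,
\[
|x_i - x_j| \ge \bigl( 1 - 2^{1/q} E(x) \bigr)\,|\xi_i - \xi_j|
\quad\text{and}\quad
|x_i - \xi_j| \ge \bigl( 1 - E(x) \bigr)\,|\xi_i - \xi_j|.
\]

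Next I would pass from $|\xi_i - \xi_j|$ to the quantities $d_j(\xi)$ and $d_i(\xi)$ that appear in the target inequalities, using only the definition \eqref{eq:function-d} of $d$. Since $i \ne j$, the index $i$ competes in the minimum defining $d_j(\xi)$, so $d_j(\xi) = \min_{k \ne j}|\xi_j - \xi_k| \le |\xi_i - \xi_j|$; symmetrically, the index $j$ competes in the minimum defining $d_i(\xi)$, giving $d_i(\xi) \le |\xi_i - \xi_j|$. These are the two elementary bounds that convert the root-vector spacings into the coordinate-wise separations $d_j(\xi)$ and $d_i(\xi)$.

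The one point that needs care — the only reason this is not a completely mechanical substitution — is the sign of the coefficients $1 - 2^{1/q}E(x)$ and $1 - E(x)$, because multiplying the bound $|\xi_i - \xi_j| \ge d_j(\xi)$ by a negative factor would reverse it. I would dispose of this by a trivial case split. If $1 - 2^{1/q}E(x) \ge 0$, then from $|\xi_i - \xi_j| \ge d_j(\xi) \ge 0$ and monotonicity we get $\bigl(1 - 2^{1/q}E(x)\bigr)|\xi_i - \xi_j| \ge \bigl(1 - 2^{1/q}E(x)\bigr)d_j(\xi)$, which combined with the first displayed inequality yields $|x_i - x_j| \ge \bigl(1 - 2^{1/q}E(x)\bigr)d_j(\xi)$. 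If instead $1 - 2^{1/q}E(x) < 0$, then the right-hand side $\bigl(1 - 2^{1/q}E(x)\bigr)d_j(\xi)$ is nonpositive while $|x_i - x_j| \ge 0$, so the inequality holds trivially. The same two-case argument, now with the factor $1 - E(x)$ and the bound $d_i(\xi) \le |\xi_i - \xi_j|$, gives the second inequality $|x_i - \xi_j| \ge (1 - E(x))\,d_i(\xi)$. Thus both estimates follow, and the case split on the sign of the contraction factor is the only nonroutine ingredient.
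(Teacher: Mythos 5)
Your proof is correct and follows essentially the same route as the paper, which simply applies Proposition~\ref{prop:inequality-1} with $u = x$, $v = \xi$ and invokes the definition of $d(\xi)$. The details you make explicit --- that $d_j(\xi) \le |\xi_i - \xi_j|$ and $d_i(\xi) \le |\xi_i - \xi_j|$ for $i \ne j$, and the trivial case split on the sign of the factors $1 - 2^{1/q}E(x)$ and $1 - E(x)$ --- are exactly what the paper leaves implicit in the phrase ``taking into account the definition of $d(\xi)$.''
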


\begin{proof}
Setting in Proposition~\ref{prop:inequality-1} ${u = x}$ and ${v = \xi}$ and taking into account the definition of $d(\xi)$, we obtain the statement of the lemma.
\end{proof}

\begin{lemma} \label{lem:Weierstrass-identity}
Let $f \in \Kset[z]$ be a polynomial of degree $n \ge 2$ which splits in $\Kset$, $\xi$ be a root-vector of $f$, and let  
${x \in \Kset^n}$ be a vector with distinct components. Then for every ${i \in I_n}$,
\begin{equation} \label{eq:Weierstrass-identity}
T_i(x) - \xi_i = \left( \, \prod_{j \neq i} { \left( 1 + u_j \right)} - 1 \right) |x_i - \xi_i| 
\end{equation}
where 
\begin{equation} \label{eq:Weierstrass-u}
u_j = \frac{x_j - \xi_j}{x_i - x_j} \, . 
\end{equation}
\end{lemma}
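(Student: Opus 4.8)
The plan is to compute $T_i(x)-\xi_i$ directly from the definitions, using the factored form of $f$ so that the leading coefficient $a_0$ cancels. The hypothesis that $x$ has distinct components is used only to guarantee that each denominator $x_i-x_j$ with $j\neq i$ is nonzero, so that $W_i(x)$ and the quantities $u_j$ from \eqref{eq:Weierstrass-u} are well-defined.

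First I would substitute the root-vector identity \eqref{eq:root-vector} into the Weierstrass correction \eqref{eq:Weierstrass-correction}. Writing $f(x_i)=a_0\prod_{k=1}^n(x_i-\xi_k)$ cancels $a_0$ and gives
\[
W_i(x)=\frac{\prod_{k=1}^n(x_i-\xi_k)}{\prod_{j\neq i}(x_i-x_j)}.
\]
Separating the factor $k=i$ in the numerator, namely $x_i-\xi_i$, this becomes $W_i(x)=(x_i-\xi_i)\prod_{j\neq i}\frac{x_i-\xi_j}{x_i-x_j}$. The key algebraic step is the elementary splitting $x_i-\xi_j=(x_i-x_j)+(x_j-\xi_j)$, which yields
\[
\frac{x_i-\xi_j}{x_i-x_j}=1+\frac{x_j-\xi_j}{x_i-x_j}=1+u_j
\]
with $u_j$ exactly as in \eqref{eq:Weierstrass-u}. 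Hence $W_i(x)=(x_i-\xi_i)\prod_{j\neq i}(1+u_j)$, and since $T_i(x)=x_i-W_i(x)$ by \eqref{Weierstrass-iteration-function}, I obtain the exact identity $T_i(x)-\xi_i=(x_i-\xi_i)\bigl(1-\prod_{j\neq i}(1+u_j)\bigr)$. Taking absolute values of both sides gives
\[
|T_i(x)-\xi_i|=\Bigl|\prod_{j\neq i}(1+u_j)-1\Bigr|\,|x_i-\xi_i|,
\]
which is the content of \eqref{eq:Weierstrass-identity}.

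There is no genuine obstacle here beyond careful bookkeeping, so I do not expect a hard step. The two things to watch are that the index ranges match correctly (the $k=i$ term is pulled out of the numerator while the product over $j\neq i$ in the denominator is kept intact, leaving one factor $\frac{x_i-\xi_j}{x_i-x_j}$ per index $j\neq i$), and that the distinct-components hypothesis is invoked to keep every fraction well-defined. The entire argument is a single substitution followed by recognizing the $1+u_j$ factor, and the product form of $f$ is what makes the calculation collapse so cleanly.
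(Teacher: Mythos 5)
Your proof is correct and follows essentially the same route as the paper's: substitute the factored form \eqref{eq:root-vector} into \eqref{eq:Weierstrass-correction} so that $a_0$ cancels, obtaining $W_i(x)=(x_i-\xi_i)\prod_{j\neq i}\frac{x_i-\xi_j}{x_i-x_j}=(x_i-\xi_i)\prod_{j\neq i}(1+u_j)$, and then conclude via $T_i(x)=x_i-W_i(x)$. Your passage to absolute values at the end also quietly repairs an evident misprint in \eqref{eq:Weierstrass-identity} (the exact identity is $T_i(x)-\xi_i=\bigl(1-\prod_{j\neq i}(1+u_j)\bigr)(x_i-\xi_i)$, so the modulus belongs on both sides or on neither), and the modulus form $|T_i(x)-\xi_i|=\bigl|\prod_{j\neq i}(1+u_j)-1\bigr|\,|x_i-\xi_i|$ is precisely what the paper uses later in \eqref{eq:Weierstrass-1-iterated-contraction-A}.
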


\begin{proof}
Let ${i \in I_n}$ be fixed.
Taking into account the identity \eqref{eq:root-vector}, we obtain
\[
W_i(x) = (x_i - \xi_i) \prod_{j \neq i} {\frac{x_i - \xi_j}{x_i - x_j}} = (x_i - \xi_i) \prod_{j \neq i} { \left( 1 + u_j \right)}.
\]
From this and \eqref{Weierstrass-iteration-function}, we get \eqref{eq:Weierstrass-identity}.
\end{proof}

\begin{lemma} \label{lem:Weierstrass-1}
Let $f \in \Kset[z]$ be a polynomial of degree $n \ge 2$ which has $n$ simple zeros in $\Kset$, $\xi$ be a root-vector of $f$ 
and $1 \le p \le \infty$.
Suppose a vector $x \in \Kset^n$ satisfies
\begin{equation} \label{eq:Weierstrass-1-initial-condition}
E(x) = \left\| \frac{x - \xi}{d(\xi)} \right\|_p < \frac{1}{2^{1/q}} \, ,
\end{equation}
where the function $E$ is defined by \eqref{eq:FIC1-SM}. Then ${x \in \mathcal{D}}$,
\begin{equation} \label{eq:Weierstrass-1-iterated-contraction}
E(Tx) \le \varphi(E(x))
\quad\text{and}\quad
\|Tx - \xi\| \preceq \phi(E(x)) \, \|x - \xi\|,
\end{equation}
where $T$ is the Weierstrass iteration function defined by \eqref{Weierstrass-iteration-function}, and the real functions $\varphi$ and 
$\phi$ are defined by
\begin{equation} \label{eq:Weierstrass-1-phi}
\phi(t) = \left( 1 + \frac{t}{(n - 1)^{1/p} (1 - 2^{1/q} \, t)} \right)^{n - 1} - 1
\quad\text{and}\quad
\varphi(t) = t \, \phi(t).
\end{equation}
\end{lemma}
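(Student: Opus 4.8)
The plan is to fix an index $i \in I_n$, estimate $T_i(x) - \xi_i$ one coordinate at a time, and then assemble the two conclusions of \eqref{eq:Weierstrass-1-iterated-contraction}. First I would dispose of the claim $x \in \mathcal{D}$: since $f$ has $n$ simple zeros, the root-vector $\xi$ has distinct components, so the hypothesis $E(x) < \frac{1}{2^{1/q}}$ is precisely the condition of Proposition~\ref{prop:distinct-components} with $u = x$ and $v = \xi$. Hence $x$ has distinct components, $x \in \mathcal{D}$, and the Weierstrass correction is well-defined at $x$.

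The heart of the argument is the pointwise estimate $|T_i(x) - \xi_i| \le \phi(E(x)) \, |x_i - \xi_i|$, from which both displayed conclusions follow at once. To obtain it, I would start from the Weierstrass identity of Lemma~\ref{lem:Weierstrass-identity},
\[
T_i(x) - \xi_i = \left( \prod_{j \ne i} (1 + u_j) - 1 \right)(x_i - \xi_i), \qquad u_j = \frac{x_j - \xi_j}{x_i - x_j},
\]
so that taking absolute values reduces the task to bounding $\left| \prod_{j \ne i}(1 + u_j) - 1 \right|$. Viewing $(u_j)_{j \ne i}$ as a vector in $\Kset^{n-1}$, Proposition~\ref{prop:product-lemma} (with $n$ replaced by $n-1$) gives
\[
\left| \prod_{j \ne i}(1 + u_j) - 1 \right| \le \left( 1 + \frac{\|u\|_p}{(n-1)^{1/p}} \right)^{n-1} - 1,
\]
so it remains only to control $\|u\|_p$.

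For the latter I would invoke Lemma~\ref{lem:FIC1-SM-properties}: the bound $|x_i - x_j| \ge (1 - 2^{1/q} E(x)) \, d_j(\xi)$ yields $|u_j| \le \frac{|x_j - \xi_j|}{(1 - 2^{1/q}E(x)) \, d_j(\xi)}$ for each $j \ne i$, and hence, after factoring out the common denominator,
\[
\|u\|_p \le \frac{1}{1 - 2^{1/q}E(x)} \left( \sum_{j \ne i} \left( \frac{|x_j - \xi_j|}{d_j(\xi)} \right)^p \right)^{1/p} \le \frac{E(x)}{1 - 2^{1/q}E(x)},
\]
where the last step discards the missing $i$-th summand and recognizes the full sum as $E(x)^p$. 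Substituting this into the product bound and using the monotonicity of $t \mapsto (1 + t/((n-1)^{1/p}(1 - 2^{1/q}t)))^{n-1} - 1$ gives exactly $\left| \prod_{j \ne i}(1 + u_j) - 1 \right| \le \phi(E(x))$, hence the pointwise estimate $|T_i(x) - \xi_i| \le \phi(E(x)) \, |x_i - \xi_i|$. This is the second (coordinate-wise) inequality in \eqref{eq:Weierstrass-1-iterated-contraction}. Dividing by $d_i(\xi)$ and taking the $p$-norm over $i$ then produces $E(Tx) \le \phi(E(x)) \, E(x) = \varphi(E(x))$, the first inequality.

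The only point requiring care is the index bookkeeping: the product runs over the $n-1$ indices $j \ne i$, which is why the normalizing factor in $\phi$ is $(n-1)^{1/p}$ rather than $n^{1/p}$, and why discarding the $i$-th summand is precisely what allows the partial sum to be bounded by the full quantity $E(x)$. Beyond this, the proof is just a matter of composing the three previously established tools (Proposition~\ref{prop:distinct-components}, Lemma~\ref{lem:Weierstrass-identity}, Lemma~\ref{lem:FIC1-SM-properties}, and Proposition~\ref{prop:product-lemma}) in the correct order, so I anticipate no genuine obstacle.
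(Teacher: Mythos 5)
Your proposal is correct and follows essentially the same route as the paper's own proof: establish $x \in \mathcal{D}$ via Proposition~\ref{prop:distinct-components}, combine Lemma~\ref{lem:Weierstrass-identity} with Proposition~\ref{prop:product-lemma} applied to the $(n-1)$-vector $(u_j)_{j \ne i}$, bound $\|u\|_p \le E(x)/(1 - 2^{1/q}E(x))$ using Lemma~\ref{lem:FIC1-SM-properties}, and then divide the coordinate-wise estimate by $d_i(\xi)$ and take the $p$-norm. Your explicit remarks on the index bookkeeping and on discarding the $i$-th summand make precise what the paper leaves implicit, but the argument is the same.
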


\begin{proof}
It follows from Proposition~\ref{prop:distinct-components} with ${u = x}$ and ${v = \xi}$ that ${x \in \mathcal{D}}$.
Let ${i \in I_n}$ be fixed.
Combining Lemma~\ref{lem:Weierstrass-identity} and Proposition~\ref{prop:product-lemma}, we obtain
\begin{equation} \label{eq:Weierstrass-1-iterated-contraction-A}
|T_i(x) - \xi_i| \le
\left[ \left( 1 + \frac{\|u\|_p}{(n-1)^{1 / p}} \right)^{n-1} - 1 \right] |x_i - \xi_i| ,
\end{equation}
where $u = (u_j)_{j \neq i} \in \Kset^{n - 1}$ and $u_j$ is defined by \eqref{eq:Weierstrass-u}.
It follows from Lemma~\ref{lem:FIC1-SM-properties} that
\begin{equation}
|u_j| = \left| \frac{x_j - \xi_j}{x_i - x_j} \right| \le \frac{|x_j - \xi_j|}{( 1 - 2^{1/q} E(x) ) \, d_j(x)}
\end{equation}
which yields
\[
\|u\|_p \le \frac{E(x)}{1 - 2^{1/q} E(x)} .
\]
From \eqref{eq:Weierstrass-1-iterated-contraction-A} and the last inequality, we get
\begin{equation} \label{eq:Weierstrass-1-iterated-contraction-B}
|T_i(x) - \xi_i| \le \phi(E(x)) |x_i - \xi_i|
\end{equation}
which yields the second inequality in \eqref{eq:Weierstrass-1-iterated-contraction}. The first inequality in 
\eqref{eq:Weierstrass-1-iterated-contraction} follows from \eqref{eq:Weierstrass-1-iterated-contraction-B} dividing by $d_i(\xi)$ and taking the $p$-norm.
\end{proof}

Now we are ready to state the main result of this section.

\begin{theorem} 
\label{thm:first-local-Convergence-theorem-Weierstrass}
Let ${f \in \Kset[z]}$ be a polynomial of degree $n \ge 2$ which has $n$ simple zeros in $\Kset$, $\xi$ be a root-vector of $f$ and
${1 \le p \le \infty}$.
Suppose $x^0 \in \Kset^n$ is an initial guess such that
\begin{equation} \label{eq:first-local-Convergence-theorem-Weierstrass-initial-condition}
E(x^0) = \left\| \frac{x^0 - \xi}{d(\xi)} \right\|_p <
R(n, p) =\frac{2^{1/(n-1)}-1}{2^{1 / q} \left( 2^{1/(n-1)}-1 \right) + (n-1)^{-1/p}} \, ,
\end{equation}
where the function $E$ is defined by \eqref{eq:FIC1-SM}.
Then the Weierstrass iteration \eqref{eq:Weierstrass-iteration} is well-defined and converges quadratically to $\xi$ with error estimates
\begin{equation} \label{eq:first-local-Convergence-theorem-Weierstrass-error-estimates}
	\|x^{k + 1} - \xi\| \preceq \lambda^{2^k} \, \|x^k - \xi\|
	\quad\text{and\quad}
\|x^k - \xi\| \preceq \lambda^{2^k - 1} \, \|x^0 - \xi\|
\end{equation}
for all $k \ge 0$, where ${\lambda = \phi(E(x^0))}$ and the real function $\phi$ is defined by \eqref{eq:Weierstrass-1-phi}.
\end{theorem}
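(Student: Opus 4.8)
The strategy is to recognize the claim as a direct specialization of Corollary~\ref{thm:first-local-convergence-theorem-CMS-phi} to the cone normed space $(\Kset^n,\|\cdot\|)$ over the solid vector space $\Rset^n$, with cone metric $d(x,y)=\|x-y\|$, the function of initial conditions $E$ from \eqref{eq:FIC1-SM}, and the Weierstrass iteration function $T$ from \eqref{Weierstrass-iteration-function}. Lemma~\ref{lem:Weierstrass-1} already carries the analytic load: its two conclusions in \eqref{eq:Weierstrass-1-iterated-contraction} state precisely that $E$ is a function of initial conditions of $T$ with gauge function $\varphi$ and that $T$ is an iterated contraction with respect to $E$ at $\xi$ with control function $\phi$, where $\varphi,\phi$ are as in \eqref{eq:Weierstrass-1-phi}. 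Since $\varphi(t)=t\,\phi(t)$ by definition, the compatibility condition \eqref{eq:standard-condition-phi} holds automatically. No completeness of the space is needed, because the corollary establishes convergence to the prescribed point $\xi$. It therefore remains only to exhibit an interval $J$ on which $\varphi$ is a strict gauge function of order $r=2$ and to verify that $x^0$ is an initial point of $T$.

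For the gauge-function step, I would first show that $\varphi$ is quasi-homogeneous of degree $2$ on $J_0=[0,1/2^{1/q})$. Writing $g(t)=t/\big((n-1)^{1/p}(1-2^{1/q}t)\big)=t\,\Phi(t)$ with $\Phi$ nondecreasing on $J_0$, the characterization following Definition~\ref{def:quasi-homogeneous-function} shows $g$ is quasi-homogeneous of degree $1$; then Example~\ref{exmp:quasi-homogeneous-function-1} applied to $g$ gives that $\phi=(1+g)^{\,n-1}-1$ is quasi-homogeneous of degree $1$, whence $\varphi(t)=t\,\phi(t)$ is quasi-homogeneous of degree $2$. Next I would locate the positive fixed point of $\varphi$ by solving $\phi(R)=1$, i.e.
\[
\Big(1+\frac{R}{(n-1)^{1/p}(1-2^{1/q}R)}\Big)^{n-1}=2,
\]
which after clearing denominators yields exactly $R=R(n,p)$ as in \eqref{eq:first-local-Convergence-theorem-Weierstrass-initial-condition}; one also checks $R(n,p)<1/2^{1/q}$, so $R(n,p)\in J_0$. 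Proposition~\ref{prop:gauge-function-sufficient-condition} then makes $\varphi$ a gauge function of order $2$ on $[0,R(n,p)]$ and, since $2>1$, a strict gauge function of order $2$ on $J=[0,R(n,p))$.

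To see that $x^0$ is an initial point, I would invoke Proposition~\ref{prop:initial-point-test} with $D=\mathcal{D}$: if $x\in\mathcal{D}$ and $E(x)\in J$, then $E(x)<1/2^{1/q}$, so Lemma~\ref{lem:Weierstrass-1} gives $E(Tx)\le\varphi(E(x))\le E(x)<R(n,p)$, using $\varphi(t)\le t$ from \eqref{eq:gauge-function-of-high-order-second-condition}; hence $E(Tx)\in J\subset J_0$ and a second application of Lemma~\ref{lem:Weierstrass-1} to $Tx$ gives $Tx\in\mathcal{D}$. This verifies \eqref{eq:initial-point-test}, so every $x^0$ with $E(x^0)<R(n,p)$ is an initial point of $T$. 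Corollary~\ref{thm:first-local-convergence-theorem-CMS-phi} now applies with $r=2$; since $S_k(2)=2^k-1$, its estimates \eqref{eq:first-local-convergence-theorem-CMS-phi-error-estimates} become exactly \eqref{eq:first-local-Convergence-theorem-Weierstrass-error-estimates}, while well-definedness and $x^k\in\mathcal{D}$ follow because each iterate is again an initial point, hence lies in $\mathcal{D}$. Quadratic convergence is then immediate from the first estimate, the order of the gauge function being $r=2$.

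The only genuinely computational obstacle is the fixed-point identity $\phi(R(n,p))=1$: the entire reduction hinges on the radius in \eqref{eq:first-local-Convergence-theorem-Weierstrass-initial-condition} being precisely the fixed point of $\varphi$, and this is the one place where the opaque closed form for $R(n,p)$ must be matched against the algebra coming from \eqref{eq:Weierstrass-1-phi}. Everything else is bookkeeping inside the abstract framework of Section~\ref{sec:Local-convergence-theorems-in-cone-metric-spaces}.
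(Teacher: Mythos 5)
Your proposal is correct and follows essentially the same route as the paper's own proof: quasi-homogeneity of $\varphi$ via Example~\ref{exmp:quasi-homogeneous-function-1}, identification of $R(n,p)$ as the positive fixed point of $\varphi$ so that Proposition~\ref{prop:gauge-function-sufficient-condition} yields a strict gauge function of order $2$ on $[0,R(n,p))$, verification of the initial-point condition through Proposition~\ref{prop:initial-point-test} by a double application of Lemma~\ref{lem:Weierstrass-1}, and conclusion by Corollary~\ref{thm:first-local-convergence-theorem-CMS-phi}. The only differences are presentational: you spell out the degree-$2$ quasi-homogeneity and the algebra behind $\phi(R(n,p))=1$ in more detail than the paper, which simply asserts these facts.
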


\begin{proof}
By Example~\ref{exmp:quasi-homogeneous-function-1}, the function $\varphi$ defined by \eqref{eq:Weierstrass-1-phi} is quasi-homogeneous of the second degree on ${[0,1/2^{1/q})}$. On the other hand, ${R = R(n,p)}$ is a fixed point of $\varphi$ in ${(0,1/2^{1/q})}$.
Then according to Proposition~\ref{prop:gauge-function-sufficient-condition}, $\varphi$ is a strict gauge function of the second order on the interval ${J = [0, R)}$. 

Now we shall apply Corollary~\ref{thm:first-local-convergence-theorem-CMS-phi} to the Weierstrass iteration function 
$T \colon \mathcal{D} \subset \Kset^n \to \Kset^n$.
It follows from Lemma~\ref{lem:Weierstrass-1} that:

$\bullet$ ${E \colon \mathcal{D} \to \Rset_+}$ is a functions of initial conditions of $T$ with gauge function $\varphi$ 
of order ${r = 2}$ on $J$.

$\bullet$ ${T \colon \mathcal{D} \to \Kset^n}$ is an iterated contraction with respect to $E$ at $\xi$ with control function $\phi$. 

It remains to prove that every vector ${x^0 \in \mathcal{D}}$ with ${E(x^0) \in J}$ is an initial point of $T$. 
According to Proposition~\ref{prop:initial-point-test} it is sufficient to prove that 
\begin{equation} \label{eq:initial-point-test-2}
x \in \mathcal{D} \text{ with } E(x) \in J \text{ implies } Tx \in \mathcal{D}.	
\end{equation}
Let ${x \in \mathcal{D}}$ be such that ${E(x) \in J}$.
From ${x \in \mathcal{D}}$, we get ${Tx \in \Kset^n}$.
From the first inequality in \eqref{eq:Weierstrass-1-iterated-contraction} taking into account that ${E(x) \in J}$ and 
${\varphi \colon J \to J}$, we conclude that ${E(Tx) \in J}$.
Then applying Lemma~\ref{lem:Weierstrass-1} to $Tx$ instead of $x$, we deduce ${Tx \in \mathcal{D}}$ which proves 
\eqref{eq:initial-point-test-2}.
 
Now the statement of the theorem follows from Corollary~\ref{thm:first-local-convergence-theorem-CMS-phi}.
\end{proof}

Let $0 < h < 1$ be a given number. Solving the equation $\phi(t) = h$ in the
interval $(0,R(n, p))$ we can formulate Theorem~\ref{thm:first-local-Convergence-theorem-Weierstrass} in the following equivalent form.

\begin{theorem} 
\label{thm:first-local-Convergence-theorem-Weierstrass-variant}
Let $f \in \Kset[z]$ be a polynomial of degree $n \ge 2$ which has $n$ simple zeros in $\Kset$, $\xi$ a root-vector of $f$, 
${1 \le p \le \infty}$ and ${0 < h < 1}$.
Suppose $x^0 \in \Kset^n$ is an initial guess such that
\begin{equation} \label{thm:first-local-Convergence-theorem-Weierstrass-variant-initial-conditions}
\left\| \frac{x^0 - \xi}{d(\xi)} \right\|_p \le R(n,p,h) = 
\frac{(1 + h)^{1/(n-1)}-1}{2^{1 / q} \left( (1 + h)^{1/(n-1)}-1 \right) + (n-1)^{-1/p}} \, .
\end{equation}
Then the Weierstrass iteration \eqref{eq:Weierstrass-iteration} is well-defined and converges quadratically to $\xi$ with error estimates
\begin{equation} \label{eq:first-local-Convergence-theorem-Weierstrass-variant-error-estimates}
	\|x^{k + 1} - \xi\| \preceq h^{2^k} \, \|x^k - \xi\|
	\quad\text{and}\quad
\|x^k - \xi\| \preceq h^{2^k - 1} \, \|x^0 - \xi\|
\end{equation}
for all $k \ge 0$.
\end{theorem}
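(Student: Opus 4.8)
The plan is to deduce this theorem directly from Theorem~\ref{thm:first-local-Convergence-theorem-Weierstrass} by exploiting the monotonicity of the function $\phi$ defined in \eqref{eq:Weierstrass-1-phi}. The guiding observation is exactly the remark preceding the statement: for a prescribed $h \in (0,1)$, the number $R(n,p,h)$ is precisely the solution of the equation $\phi(t) = h$ in the interval $(0, R(n,p))$. Thus the entire content of the theorem is a monotone change of the parameter governing the geometric decay, and no new convergence analysis is needed.

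First I would establish that $\phi$ is a continuous, strictly increasing bijection of $[0, R(n,p))$ onto $[0,1)$. Continuity and strict monotonicity are immediate from \eqref{eq:Weierstrass-1-phi}, since $t \mapsto t/\bigl((n-1)^{1/p}(1 - 2^{1/q} t)\bigr)$ is increasing on $[0, 1/2^{1/q})$ and raising $1 + (\cdot)$ to the power $n-1$ preserves this. The endpoint values are $\phi(0) = 0$ and $\phi(R(n,p)) = 1$; the latter holds because $R(n,p)$ is a fixed point of $\varphi = t\phi(t)$ (as already used in the proof of Theorem~\ref{thm:first-local-Convergence-theorem-Weierstrass}), so that $R(n,p)\,\phi(R(n,p)) = R(n,p)$ forces $\phi(R(n,p)) = 1$.

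Next I would solve $\phi(t) = h$ explicitly. Taking the $(n-1)$-th root and isolating $t$ in the resulting relation $t/\bigl((n-1)^{1/p}(1 - 2^{1/q}t)\bigr) = (1+h)^{1/(n-1)} - 1$ is a routine linear manipulation that yields exactly the value $R(n,p,h)$ appearing in \eqref{thm:first-local-Convergence-theorem-Weierstrass-variant-initial-conditions}. Observing that $R(n,p) = R(n,p,1)$ and that $h \mapsto R(n,p,h)$ is increasing, the strict inequality $h < 1$ gives $R(n,p,h) < R(n,p)$.

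Finally, the hypothesis $E(x^0) \le R(n,p,h)$ then yields $E(x^0) < R(n,p)$, so Theorem~\ref{thm:first-local-Convergence-theorem-Weierstrass} applies: the Weierstrass iteration is well-defined and converges quadratically with the estimates \eqref{eq:first-local-Convergence-theorem-Weierstrass-error-estimates}, where $\lambda = \phi(E(x^0))$. By the monotonicity of $\phi$ together with $\phi(R(n,p,h)) = h$ we get $\lambda \le h < 1$, whence $\lambda^{2^k} \le h^{2^k}$ and $\lambda^{2^k - 1} \le h^{2^k - 1}$; substituting these into \eqref{eq:first-local-Convergence-theorem-Weierstrass-error-estimates} delivers \eqref{eq:first-local-Convergence-theorem-Weierstrass-variant-error-estimates}. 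I do not expect a genuine obstacle here, since the result is essentially a corollary; the only point requiring care is the strict-versus-nonstrict inequality at the boundary, which is exactly why the step $R(n,p,h) < R(n,p)$ is needed to convert the nonstrict hypothesis into the strict assumption of Theorem~\ref{thm:first-local-Convergence-theorem-Weierstrass}.
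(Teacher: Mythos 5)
Your proposal is correct and follows exactly the paper's own route: the paper obtains Theorem~\ref{thm:first-local-Convergence-theorem-Weierstrass-variant} from Theorem~\ref{thm:first-local-Convergence-theorem-Weierstrass} precisely by solving $\phi(t)=h$ on $(0,R(n,p))$, which yields $R(n,p,h)$, and then using monotonicity of $\phi$ to replace $\lambda=\phi(E(x^0))$ by $h$ in the estimates. Your write-up merely makes explicit the details the paper leaves implicit, including the useful boundary observation that $R(n,p,h)<R(n,p)=R(n,p,1)$ converts the nonstrict hypothesis into the strict condition required by Theorem~\ref{thm:first-local-Convergence-theorem-Weierstrass}.
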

 
In the next two corollaries we denote by $\fsep$ the separation number of $f$ which is defined to be the minimum distance between two zeros of $f$, that is,
\begin{equation} \label{eq:separation-number}
\fsep = \min_{i \ne j} |\xi_i - \xi_j|.
\end{equation}

\begin{corollary} \label{cor:Dochev} 
Let $f \in \Kset[z]$ be a polynomial of degree $n \ge 2$ which has $n$ simple zeros in $\Kset$, $\xi$ be a root-vector of $f$, 
$1 \le p \le \infty$ and $0 < h < 1$.
Suppose $x^0 \in \Kset^n$ is an initial guess such that
\begin{equation} \label{eq:Dochev-initial-conditions}
\|x^0 - \xi\|_p \le \rho = R(n,p,h) \, \fsep,
\end{equation}
where $R(n,p,h)$ is defined in \eqref{thm:first-local-Convergence-theorem-Weierstrass-variant-initial-conditions}.
Then the Weierstrass iteration \eqref{eq:Weierstrass-iteration} is well-defined and converges quadratically to $\xi$ with error estimates 
\eqref{eq:first-local-Convergence-theorem-Weierstrass-variant-error-estimates} and
\begin{equation} \label{eq:Dochev-error-estimates3}
\|x^k - \xi\|_p \le \rho \, h^{2^k - 1}.
\end{equation}
\end{corollary}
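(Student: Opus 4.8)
The plan is to deduce this corollary directly from Theorem~\ref{thm:first-local-Convergence-theorem-Weierstrass-variant} by showing that the hypothesis \eqref{eq:Dochev-initial-conditions}, which is stated in terms of the ordinary $p$-norm and the separation number, is stronger than the hypothesis \eqref{thm:first-local-Convergence-theorem-Weierstrass-variant-initial-conditions}, which is stated in terms of the weighted quantity $E(x^0)$ defined in \eqref{eq:FIC1-SM}. Once this reduction is made, the convergence and the estimates \eqref{eq:first-local-Convergence-theorem-Weierstrass-variant-error-estimates} come for free, and only the scalar estimate \eqref{eq:Dochev-error-estimates3} needs a short extra argument.

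First I would relate $E$ to the $p$-norm. For each $i \in I_n$ we have $d_i(\xi) = \min_{j \neq i} |\xi_i - \xi_j| \ge \fsep$ by the definition \eqref{eq:separation-number} of the separation number. Hence $|x_i - \xi_i| / d_i(\xi) \le |x_i - \xi_i| / \fsep$ for every $i$, and taking the $p$-norm of these nonnegative coordinates gives
\[
E(x) = \left\| \frac{x - \xi}{d(\xi)} \right\|_p \le \frac{\|x - \xi\|_p}{\fsep}.
\]
Applying this with $x = x^0$ and using \eqref{eq:Dochev-initial-conditions} yields $E(x^0) \le \|x^0 - \xi\|_p / \fsep \le R(n,p,h)$, which is precisely the initial condition \eqref{thm:first-local-Convergence-theorem-Weierstrass-variant-initial-conditions}.

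Therefore Theorem~\ref{thm:first-local-Convergence-theorem-Weierstrass-variant} applies: the Weierstrass iteration is well-defined, converges quadratically to $\xi$, and satisfies the error estimates \eqref{eq:first-local-Convergence-theorem-Weierstrass-variant-error-estimates}. It then remains only to establish \eqref{eq:Dochev-error-estimates3}. The second estimate in \eqref{eq:first-local-Convergence-theorem-Weierstrass-variant-error-estimates} is the coordinate-wise inequality $|x^k_i - \xi_i| \le h^{2^k - 1} |x^0_i - \xi_i|$ for each $i$; taking the $p$-norm (which is monotone on nonnegative vectors) gives $\|x^k - \xi\|_p \le h^{2^k - 1} \|x^0 - \xi\|_p$. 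Combining this with $\|x^0 - \xi\|_p \le \rho$ from \eqref{eq:Dochev-initial-conditions} yields $\|x^k - \xi\|_p \le \rho \, h^{2^k - 1}$, which is exactly \eqref{eq:Dochev-error-estimates3}.

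Since every step is a short monotonicity argument, there is no genuine obstacle here; the only point requiring care is the reduction of the weighted function of initial conditions $E$ to the ordinary $p$-norm, that is, the elementary bound $d_i(\xi) \ge \fsep$, which is the sole place where the separation number enters the argument.
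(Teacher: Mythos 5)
Your proposal is correct and follows essentially the same route as the paper: reduce the hypothesis \eqref{eq:Dochev-initial-conditions} to \eqref{thm:first-local-Convergence-theorem-Weierstrass-variant-initial-conditions} via the inequality $E(x) \le \|x - \xi\|_p / \fsep$ (which the paper calls obvious and you justify by $d_i(\xi) \ge \fsep$), then obtain \eqref{eq:Dochev-error-estimates3} from the second estimate in \eqref{eq:first-local-Convergence-theorem-Weierstrass-variant-error-estimates} together with \eqref{eq:Dochev-initial-conditions}. You merely spell out details the paper leaves implicit, such as the monotonicity of the $p$-norm under the coordinate-wise ordering.
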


\begin{proof}
It follows immediately from Theorem~\ref{thm:first-local-Convergence-theorem-Weierstrass-variant} and the obvious inequality 
\[
E(x) \le \frac{\|x - \xi\|_p}{\fsep}
\]
which holds for every ${x \in \Kset^n}$.
The estimate \eqref{eq:Dochev-error-estimates3} follows from the second estimate in 
\eqref{eq:first-local-Convergence-theorem-Weierstrass-variant-error-estimates} and \eqref{eq:Dochev-initial-conditions}.
\end{proof}

Corollary~\ref{cor:Dochev} is a generalization and improvement of the result of Dochev \cite{Doc62b} (see also \cite{ID63}). 
In the case $p = \infty$, he proved the estimate \eqref{eq:Dochev-error-estimates3}  under the initial condition 
\eqref{eq:Dochev-initial-conditions}.
 
\begin{corollary} \label{cor:Kyurkchiev-Markov}
Let $f \in \Kset[z]$ be a polynomial of degree $n \ge 2$ which has $n$ simple zeros in $\Kset$, $\xi$ be a root-vector of $f$, 
$1 \le p \le \infty$, $0 < h < 1$ and
${0 < c \le R(n, p) \, \fsep }$, where $R(n, p)$ is defined in \eqref{eq:first-local-Convergence-theorem-Weierstrass-initial-condition}.
Suppose $x^0 \in \Kset^n$ is an initial guess satisfying
\begin{equation} \label{eq:Kyurkchiev-Markov-initial-condition}
\left\| x^0 - \xi \right\|_p \le c \, h .
\end{equation}
Then the Weierstrass iteration \eqref{eq:Weierstrass-iteration} is well-defined and converges quadratically to $\xi$ with error estimates 
\eqref{eq:first-local-Convergence-theorem-Weierstrass-variant-error-estimates} and
\begin{equation} \label{eq:Kyurkchiev-Markov-estimate3}
\|x^k - \xi\|_p \le c \, h^{2^k}.
\end{equation}
\end{corollary}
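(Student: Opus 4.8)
The plan is to derive this corollary from the local convergence result already established, namely Theorem~\ref{thm:first-local-Convergence-theorem-Weierstrass}, by controlling the quantity $E(x^0)$ and the contraction factor $\lambda=\phi(E(x^0))$. The bridge between the hypothesis, which is stated in terms of the ordinary $p$-norm $\|x^0-\xi\|_p$, and the function of initial conditions $E$ is the elementary estimate $E(x)\le\|x-\xi\|_p/\fsep$, valid for every $x\in\Kset^n$; it follows at once from $d_i(\xi)=\min_{j\ne i}|\xi_i-\xi_j|\ge\fsep$ together with the definition \eqref{eq:FIC1-SM}. Combining this with the hypotheses $\|x^0-\xi\|_p\le c\,h$ and $c\le R(n,p)\,\fsep$ gives $E(x^0)\le c\,h/\fsep\le R(n,p)\,h$. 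Since $0<h<1$, this already yields $E(x^0)<R(n,p)$, so the initial condition \eqref{eq:first-local-Convergence-theorem-Weierstrass-initial-condition} of Theorem~\ref{thm:first-local-Convergence-theorem-Weierstrass} is met and the Weierstrass iteration is well-defined and converges quadratically to $\xi$.

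The key step is to show that the contraction factor provided by that theorem, $\lambda=\phi(E(x^0))$, is in fact bounded by $h$, so that the error estimates sharpen to \eqref{eq:first-local-Convergence-theorem-Weierstrass-variant-error-estimates}. I would argue this through the quasi-homogeneity of $\varphi$ rather than through any explicit inversion of $\phi$. Recall that $\varphi=t\,\phi$ is quasi-homogeneous of degree $2$ on $[0,1/2^{1/q})$ (this was verified in the proof of Theorem~\ref{thm:first-local-Convergence-theorem-Weierstrass}) and that $R:=R(n,p)$ is a fixed point of $\varphi$, i.e. $\varphi(R)=R$, with $R<1/2^{1/q}$. Applying the quasi-homogeneity inequality \eqref{eq:quasi-homogeneous-function-1} with scaling parameter $h\in[0,1]$ and argument $t=R$ gives $\varphi(hR)\le h^2\varphi(R)=h^2R$, whence $\phi(hR)=\varphi(hR)/(hR)\le h$. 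Since $\phi$ is nondecreasing and $E(x^0)\le hR$, we conclude $\lambda=\phi(E(x^0))\le\phi(hR)\le h$. Then the cone estimates of Theorem~\ref{thm:first-local-Convergence-theorem-Weierstrass}, combined with $\lambda^{2^k}\le h^{2^k}$ and $\lambda^{2^k-1}\le h^{2^k-1}$ and the positivity of $\|x^k-\xi\|$ in the cone, yield exactly \eqref{eq:first-local-Convergence-theorem-Weierstrass-variant-error-estimates}. Equivalently, one could verify the inequality $R(n,p)\,h\le R(n,p,h)$ and invoke Theorem~\ref{thm:first-local-Convergence-theorem-Weierstrass-variant} directly; the quasi-homogeneity route has the advantage of avoiding the concavity comparison of the two radii.

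Finally, I would read off \eqref{eq:Kyurkchiev-Markov-estimate3}. Taking the $p$-norm in the second (cone) estimate of \eqref{eq:first-local-Convergence-theorem-Weierstrass-variant-error-estimates} and using the monotonicity of $\|\cdot\|_p$ with respect to the coordinate-wise order gives $\|x^k-\xi\|_p\le h^{2^k-1}\,\|x^0-\xi\|_p$, and then the hypothesis $\|x^0-\xi\|_p\le c\,h$ converts this into $\|x^k-\xi\|_p\le c\,h^{2^k}$. I expect the only genuinely delicate point to be the second paragraph, that is, establishing $\lambda\le h$; once the quasi-homogeneity of $\varphi$ together with the fixed-point identity $\varphi(R)=R$ is exploited, everything else is routine bookkeeping with the monotone $p$-norm and the already-proven convergence theorem.
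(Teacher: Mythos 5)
Your proposal is correct and takes essentially the same route as the paper: reduce to Theorem~\ref{thm:first-local-Convergence-theorem-Weierstrass} via $E(x^0)\le \|x^0-\xi\|_p/\fsep$, prove $\lambda=\phi(E(x^0))\le h$ by quasi-homogeneity together with the fixed-point identity $\varphi(R)=R$ (your degree-$2$ argument for $\varphi$ is algebraically identical to the paper's degree-$1$ quasi-homogeneity of $\phi$, since $\varphi(t)=t\,\phi(t)$), and then read off \eqref{eq:Kyurkchiev-Markov-estimate3} from the monotone $p$-norm. The only difference is cosmetic: you carry the normalization by $\fsep$ explicitly (working with $E(x^0)\le h\,R(n,p)$), whereas the paper's displayed chain $\phi(c\,h)\le h\,\phi(c)\le h\,\phi(R(n,p))=h$ tacitly identifies $c$ with $c/\fsep$.
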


\begin{proof}
Obviously, the initial guess $x^0$ satisfies the initial condition 
\eqref{eq:first-local-Convergence-theorem-Weierstrass-initial-condition} 
of Theorem~\ref{thm:first-local-Convergence-theorem-Weierstrass}. 
Therefore, we obtain the estimates \eqref{eq:first-local-Convergence-theorem-Weierstrass-error-estimates} which imply the estimates
\eqref{eq:first-local-Convergence-theorem-Weierstrass-variant-error-estimates}. 
Indeed, we have
\[
\lambda = \phi(E(x^0)) \le \phi(c \, h) \le h \, \phi(c) \le h \, \phi(R(n, p)) = h
\]
since $\phi$ is quasi-homogeneous of the first degree on ${[0, 1 / 2^{1/q})}$.
The estimate \eqref{eq:Kyurkchiev-Markov-estimate3} follows from
the second estimate in \eqref{eq:first-local-Convergence-theorem-Weierstrass-variant-error-estimates} and  
\eqref{eq:Kyurkchiev-Markov-initial-condition}.
\end{proof}

Corollary~\ref{cor:Kyurkchiev-Markov} improves the result of Kyurkchiev and Markov \cite{KM83}. 
In the case $p = \infty$, they have proved the estimate \eqref{eq:Kyurkchiev-Markov-estimate3} under the initial condition 
\eqref{eq:Dochev-initial-conditions} but with a stronger condition for $c$.

\medskip
In 2002, Yakoubsohn \cite{Yak02} published a $\gamma$-theorem for Weierstrass method.
He introduce the quantity
\begin{equation}
	\gamma(f) = \max_{1 \le i \le n} {\gamma(f,\xi_i)}
	\quad\text{where}\quad \gamma(f,x) = \max_{k>1}{\left| \frac{f^{(k)}(x)}{k! f'(x)} \right|^{1 / (k-1)}}.
\end{equation}
Recall that $\gamma(f,x)$ has been introduced by Smale in his famous work \cite{Sma86}.

\begin{corollary} \label{cor:Yakoubsohn-2002}
Let $f \in \Kset[z]$ be a polynomial of degree $n \ge 2$ which has $n$ simple zeros in $\Kset$, $\xi$ be a root-vector of $f$ and 
${1 \le p \le \infty}$.
If ${x^0 \in \Kset^n}$ is an initial guess satisfying
\begin{equation} \label{eq:Yakoubsohn-initial-condition}
\left\| \frac{x^0 - \xi}{d(\xi)} \right\|_p \le \frac{R(n,p,h)}{2 \, \gamma(f)} \, ,
\end{equation}
where $R(n, p,h)$ is defined in \eqref{thm:first-local-Convergence-theorem-Weierstrass-variant-initial-conditions}, 
then the Weierstrass iteration \eqref{eq:Weierstrass-iteration} is well-defined and converges quadratically to $\xi$ with error estimates 
\eqref{eq:first-local-Convergence-theorem-Weierstrass-variant-error-estimates}. 
\end{corollary}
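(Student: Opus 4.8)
The plan is to derive the corollary directly from Theorem~\ref{thm:first-local-Convergence-theorem-Weierstrass-variant}. That theorem already delivers well-definedness, quadratic convergence, and the error estimates \eqref{eq:first-local-Convergence-theorem-Weierstrass-variant-error-estimates} the moment the initial guess satisfies $E(x^0)\le R(n,p,h)$, where $E$ is the function of initial conditions \eqref{eq:FIC1-SM}. Hence the only thing left to prove is that the $\gamma$-based hypothesis \eqref{eq:Yakoubsohn-initial-condition} implies this inequality. All the genuine work is therefore concentrated in comparing Smale's invariant $\gamma(f)$ with the separation vector $d(\xi)$; no fresh analysis of the Weierstrass iteration itself is required.

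The bridge I would establish is the pointwise lower bound $\gamma(f,\xi_i)\ge 1/(2\,d_i(\xi))$ for every $i\in I_n$. The first step is to put the quantities defining $\gamma(f,\xi_i)$ into closed form. Writing $f(z)=a_0(z-\xi_i)\,g(z)$ with $g(z)=\prod_{j\ne i}(z-\xi_j)$ and applying Leibniz's rule gives $f^{(k)}(\xi_i)=a_0\,k\,g^{(k-1)}(\xi_i)$, so that
\[
\frac{f^{(k)}(\xi_i)}{k!\,f'(\xi_i)}=\frac{g^{(k-1)}(\xi_i)}{(k-1)!\,g(\xi_i)}=\sigma_{k-1},
\]
where $\sigma_m$ denotes the $m$-th elementary symmetric function of the $n-1$ numbers $1/(\xi_i-\xi_j)$, $j\ne i$. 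Equivalently, the $\sigma_m$ are the Taylor coefficients of the polynomial $G(w)=\prod_{j\ne i}\bigl(1+w/(\xi_i-\xi_j)\bigr)=\sum_{m=0}^{n-1}\sigma_m w^m$, which satisfies $G(0)=\sigma_0=1$ and whose root nearest the origin has modulus exactly $d_i(\xi)$.

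The crux, and the step I expect to be the main obstacle, is to squeeze the factor $2$ out of that nearest root. Put $\Gamma=\gamma(f,\xi_i)=\max_{m\ge1}|\sigma_m|^{1/m}$, so that $|\sigma_m|\le\Gamma^m$. Evaluating $G$ at a root $w_0$ with $|w_0|=d_i(\xi)$ gives $0=1+\sum_{m\ge1}\sigma_m w_0^m$, and the triangle inequality yields $1\le\sum_{m\ge1}(\Gamma\,d_i(\xi))^m$. If $\Gamma\,d_i(\xi)\ge1$ the bound $\Gamma\ge 1/(2d_i(\xi))$ is immediate; otherwise summing the geometric series gives $1\le \Gamma d_i(\xi)/(1-\Gamma d_i(\xi))$, that is $\Gamma\,d_i(\xi)\ge 1/2$, which is exactly the claimed lemma. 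Once this is in hand, $\gamma(f)=\max_i\gamma(f,\xi_i)$ produces the uniform estimate $1/d_i(\xi)\le 2\gamma(f)$ for all $i$, and therefore $E(x^0)\le 2\gamma(f)\,\|x^0-\xi\|_p$. The initial condition \eqref{eq:Yakoubsohn-initial-condition} bounds the right-hand side by $R(n,p,h)$, so $E(x^0)\le R(n,p,h)$ and Theorem~\ref{thm:first-local-Convergence-theorem-Weierstrass-variant} delivers the stated conclusion.
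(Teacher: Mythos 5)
Your proposal is correct in substance and follows the same overall reduction as the paper --- verify that the hypothesis of Theorem~\ref{thm:first-local-Convergence-theorem-Weierstrass-variant} holds and invoke it --- but it takes a genuinely different route on the key ingredient. The paper's proof is a one-liner: it simply cites the inequality $\gamma(f) \ge 1/(2\,\fsep)$ from Yakoubsohn \cite{Yak00} and combines it with Theorem~\ref{thm:first-local-Convergence-theorem-Weierstrass-variant} (effectively passing through the separation number as in Corollary~\ref{cor:Dochev}). You instead prove, from scratch, the sharper pointwise bound $\gamma(f,\xi_i) \ge 1/(2\,d_i(\xi))$ for each $i$, and your derivation is complete and valid over an arbitrary normed field: the identification $f^{(k)}(\xi_i)/(k!\,f'(\xi_i)) = \sigma_{k-1}$ via Leibniz's rule, the observation that $G(w)=\prod_{j\ne i}\bigl(1+w/(\xi_i-\xi_j)\bigr)$ has its nearest root at modulus $d_i(\xi)$, and the two-case geometric-series argument are all sound. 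Since $\fsep=\min_i d_i(\xi)$, your pointwise lemma implies the cited global inequality, so your route subsumes the paper's, makes the corollary self-contained, and yields the cleaner chain $E(x^0)\le 2\gamma(f)\,\|x^0-\xi\|_p$ without detouring through $\fsep$.

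One caveat, which applies equally to the paper's own argument: in your last step you use \eqref{eq:Yakoubsohn-initial-condition} to bound $\|x^0-\xi\|_p$ by $R(n,p,h)/(2\gamma(f))$, whereas as printed the condition bounds the weighted norm $\bigl\|(x^0-\xi)/d(\xi)\bigr\|_p$. Under that literal reading neither your chain nor the paper's citation yields $E(x^0)\le R(n,p,h)$ (one would need $2\gamma(f)\ge 1$, i.e.\ $\fsep\le 1$). Since $\gamma(f)$ scales as a reciprocal length while the left-hand side of \eqref{eq:Yakoubsohn-initial-condition} is scale-invariant, the dimensionally consistent --- and evidently intended --- hypothesis is $\|x^0-\xi\|_p\le R(n,p,h)/(2\gamma(f))$, under which your argument is exactly right; this is a defect of the printed statement, not of your proof.
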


\begin{proof}
It follows from Theorem~\ref{thm:first-local-Convergence-theorem-Weierstrass-variant} and the inequality
${\gamma(f) \ge 1 / (2 \, \fsep)}$ which is due to Yakoubsohn \cite{Yak00}.
\end{proof}

Corollary~\ref{cor:Yakoubsohn-2002} generalizes and improves the result of Yakoubsohn \cite{Yak02}. 
In the case ${p = \infty}$ and ${h = 1/2}$, he proved the second estimate estimate in 
\eqref{eq:first-local-Convergence-theorem-Weierstrass-variant-error-estimates} 
under a stronger initial condition than \eqref{eq:Yakoubsohn-initial-condition}.

%
%

\section{Local convergence of the second kind of the Weierstrass method}
\label{sec:Local-convergence-of-the-Weierstrass-method-II}

Let ${f \in \Kset[z]}$ be a polynomial of degree ${n \ge 2}$ which splits in $\Kset$, and let ${\xi \in \Kset^n}$ be a root-vector of $f$.
In this section we study the convergence of the Weierstrass method \eqref{eq:Weierstrass-iteration} with respect to the function of 
initial conditions ${E \colon \mathcal{D} \to \Rset_+}$ defined by
\begin{equation} \label{eq:FIC2-SM}
E(x) = \left\| \frac{x - \xi}{d(x)} \right\|_p ,
\end{equation}
where $1 \le p \le \infty$.
Recall that $\mathcal{D}$ denotes the set of all vectors in $\Kset^n$ with distinct components.
We prove that the Weierstrass iteration function $T$ defined by \eqref{Weierstrass-iteration-function} 
is also an iterated contraction with respect to this function $E$ at the point $\xi$. 
The main result of this section generalizes, improves and complements the results of 
Wang and Zhao \cite{WZ91}, 
Tilli \cite{Til98} and 
Han \cite{Han00}.

\begin{lemma} \label{lem:FIC2-SM-properties}
Let ${f \in \Kset[z]}$ be a polynomial of degree ${n \ge 2}$ which splits in $\Kset$, ${\xi \in \Kset^n}$ be a root-vector of $f$, 
${x \in \Kset^n}$ and ${1 \le p \le \infty}$. Then for ${i \ne j}$,
\[
|x_i - \xi_j| \ge (1 - E(x)) \, d_i(x) \quad\text{and}\quad |x_i - x_j| \ge d_j(x),
\]
where ${E \colon \mathcal{D} \to \Rset_+}$ is defined by \eqref{eq:FIC2-SM}.
\end{lemma}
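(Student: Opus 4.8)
The plan is to reduce both inequalities to Proposition~\ref{prop:inequality-1}, exactly as in the proof of Lemma~\ref{lem:FIC1-SM-properties}, but with the roles of the two vectors interchanged. Since the function $E$ in \eqref{eq:FIC2-SM} normalises by $d(x)$ rather than by $d(\xi)$, the natural choice here is to apply Proposition~\ref{prop:inequality-1} with $u = \xi$ and $v = x$. Then the hypothesis that $v$ has distinct components is precisely ${x \in \mathcal{D}}$ (which is implicit, as $E(x)$ is only defined for such $x$), and the normalising quantity $\left\| \frac{u-v}{d(v)} \right\|_p$ becomes $\left\| \frac{\xi - x}{d(x)} \right\|_p = E(x)$.

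For the first inequality, I would invoke Proposition~\ref{prop:inequality-1}(ii) at the index pair $(j,i)$ rather than $(i,j)$, which, together with the symmetry of the absolute value, yields
\[
|x_i - \xi_j| = |\xi_j - x_i| \ge \left( 1 - E(x) \right) |x_j - x_i|.
\]
The final step is to replace $|x_j - x_i|$ by the smaller quantity $d_i(x)$, using $d_i(x) = \min_{k \ne i}|x_i - x_k| \le |x_i - x_j|$. This replacement is legitimate precisely when the factor $1 - E(x)$ is nonnegative; in the remaining case $E(x) > 1$ the claimed bound holds trivially, since its right-hand side is then negative while its left-hand side is a modulus. The second inequality requires no appeal to the proposition: since $i \ne j$, the index $i$ is one of the competitors in the minimum defining $d_j(x)$, whence $|x_i - x_j| = |x_j - x_i| \ge \min_{k \ne j}|x_j - x_k| = d_j(x)$.

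I do not anticipate any genuine obstacle. The only point requiring care is the bookkeeping of indices and of the vector roles $u,v$: one must feed $(u,v) = (\xi,x)$ and read off the inequality at the transposed index pair, so that after bounding $|x_i - x_j|$ from below the term $d_i(x)$ emerges rather than $d_j(x)$. The sign caveat on $1 - E(x)$ should be stated explicitly to keep the argument valid for every $x \in \mathcal{D}$, but it is purely formal and adds nothing substantive.
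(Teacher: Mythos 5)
Your proposal is correct and follows essentially the same route as the paper: the paper's proof likewise applies Proposition~\ref{prop:inequality-1}(ii) with $u = \xi$ and $v = x$, then passes from $|x_i - x_j|$ to $d_i(x)$ via the definition of $d(x)$, and dismisses the second inequality as obvious. Your extra care with the transposed index pair and the sign of $1 - E(x)$ merely makes explicit details the paper leaves implicit.
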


\begin{proof}
Setting in Proposition~\ref{prop:inequality-1}(ii) ${u = \xi}$ and ${v = x}$ and taking into account the definition of $d(x)$, we obtain the first conclusion of the lemma. The second conclusion is obvious.
\end{proof}

\begin{lemma} \label{lem:Weierstrass-2}
Let $f \in \Kset[z]$ be a polynomial of degree $n \ge 2$ which splits in $\Kset$, ${\xi \in\Kset^n}$ be a root-vector of $f$ 
and $1 \le p \le \infty$.
Suppose ${x \in\Kset^n}$ is a vector with distinct components such that
\begin{equation} \label{eq:Weierstrass-2-initial-condition}
\psi(E(x)) > 0,
\end{equation}
where the function $E \colon \mathcal{D} \to \Rset_+$ is defined by \eqref{eq:FIC2-SM} and the real function $\psi$ is defined by
\begin{equation} \label{eq:Weierstrass-2-psi}
\psi(t) = 1 - 2^{1/q} \, t \left( 1 + \frac{t}{(n - 1)^{1 / p}} \right)^{n - 1} .
\end{equation}
Then $f$ has only simple zeros in $\Kset$, ${Tx}$ has pairwise distinct components,
\begin{equation} \label{eq:Weierstrass-2-iterated-contraction}
E(Tx) \le \varphi(E(x))
\quad\text{and}\quad
\|Tx - \xi\| \preceq \beta(E(x)) \|x - \xi\|,
\end{equation}
where $T \colon \mathcal{D} \subset \Kset^n \to \Kset^n$ is the Weierstrass iteration function defined by 
\eqref{Weierstrass-iteration-function}, and the real functions $\varphi$ and $\beta$ are defined by
${\varphi(t) = t \, \beta(t) / \psi(t)}$ and
\begin{equation} \label{eq:Weierstrass-2-beta}
	\beta(t) = \left( 1 + \frac{t}{(n - 1)^{1 / p}} \right)^{n - 1} - 1
\end{equation}
\end{lemma}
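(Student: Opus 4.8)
The plan is to follow the blueprint of Lemma~\ref{lem:Weierstrass-1}, but with one extra ingredient: since the function $E$ now carries $d(x)$ (not the fixed vector $d(\xi)$) in its denominator, controlling $E(Tx)$ forces us to bound from below the separations $d(Tx)$ of the iterated vector. First I would fix $i \in I_n$ and set $u_j = (x_j - \xi_j)/(x_i - x_j)$ for $j \ne i$, as in Lemma~\ref{lem:Weierstrass-identity}. The opening estimate is the clean bound $\|u\|_p \le E(x)$: the second inequality of Lemma~\ref{lem:FIC2-SM-properties} gives $|x_i - x_j| \ge d_j(x)$, whence $|u_j| \le |x_j - \xi_j|/d_j(x)$, and summing the $p$th powers over $j \ne i$ yields $\|u\|_p \le \|(x-\xi)/d(x)\|_p = E(x)$. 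This is sharper than in the first-kind case, where a factor $1 - 2^{1/q} t$ appears, precisely because here $d(x)$ sits in the denominator of $E$.

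Write $g(t) = (1 + t/(n-1)^{1/p})^{n-1}$, so that $\beta = g - 1$ and $\psi(t) = 1 - 2^{1/q}\, t\, g(t)$. Applying Proposition~\ref{prop:product-lemma} to $\prod_{j\ne i}(1+u_j)$ together with $\|u\|_p \le E(x)$ gives, via the Weierstrass identity of Lemma~\ref{lem:Weierstrass-identity}, the bound $|T_i(x) - \xi_i| \le \beta(E(x))\,|x_i - \xi_i|$; passing to the cone norm yields at once the second inequality in \eqref{eq:Weierstrass-2-iterated-contraction}. The same proposition, applied to the factored form $W_i(x) = (x_i - \xi_i)\prod_{j\ne i}(1+u_j)$, gives $|W_i(x)| \le g(E(x))\,|x_i - \xi_i|$. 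Before going further I would dispose of the simple-zeros claim: since $g(E(x)) \ge 1$, the hypothesis $\psi(E(x)) > 0$ forces $2^{1/q} E(x) \le 2^{1/q} E(x) g(E(x)) < 1$, i.e. $E(x) < 1/2^{1/q}$, so Proposition~\ref{prop:distinct-components} (with $u = \xi$, $v = x$) shows that $\xi$ has pairwise distinct components; hence $f$ has only simple zeros.

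The hard part is the lower bound on $d(Tx)$, which is the genuinely new step relative to the first kind. Since $Tx - x = -W(x)$, the per-coordinate bound on $|W_i(x)|$ gives $\|(Tx - x)/d(x)\|_p \le g(E(x))\,E(x)$, and then Proposition~\ref{prop:inequality-2} with $u = Tx$, $v = x$ delivers $d(Tx) \succeq (1 - 2^{1/q} g(E(x)) E(x))\, d(x) = \psi(E(x))\, d(x)$. Because $\psi(E(x)) > 0$, this lower bound is strictly positive, so $Tx$ has pairwise distinct components, i.e. $Tx \in \mathcal{D}$ and $d(Tx)$ is well-defined. It remains only to combine: for each $i$, $|T_i(x) - \xi_i|/d_i(Tx) \le \beta(E(x))\,|x_i - \xi_i|/(\psi(E(x))\, d_i(x))$, so taking the $p$-norm gives $E(Tx) \le (\beta(E(x))/\psi(E(x)))\, E(x) = \varphi(E(x))$, the first inequality in \eqref{eq:Weierstrass-2-iterated-contraction}. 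The role of the positivity of $\psi(E(x))$ is thus twofold: it keeps $d(Tx)$ positive, so that $Tx \in \mathcal{D}$, and it makes the quotient defining $\varphi$ meaningful.
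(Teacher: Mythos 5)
Your proof is correct and follows essentially the same route as the paper: bound $\|u\|_p \le E(x)$ via $|x_i - x_j| \ge d_j(x)$, deduce the contraction bound from Lemma~\ref{lem:Weierstrass-identity} and Proposition~\ref{prop:product-lemma}, extract $E(x) < 1/2^{1/q}$ from $\psi(E(x)) > 0$ to get simple zeros, and obtain $d(Tx) \succeq \psi(E(x))\,d(x)$ from Proposition~\ref{prop:inequality-2} to conclude $Tx \in \mathcal{D}$ and $E(Tx) \le \varphi(E(x))$. The only (immaterial) deviation is that you bound $\|(Tx - x)/d(x)\|_p$ by estimating $|W_i(x)|$ directly through the product representation, whereas the paper uses the triangle inequality $\|Tx - x\| \preceq (1 + \beta(E(x)))\|x - \xi\|$; since $1 + \beta(t) = \left(1 + t/(n-1)^{1/p}\right)^{n-1}$, the two bounds coincide.
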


\begin{proof}
Note that condition \eqref{eq:Weierstrass-2-initial-condition} implies that $E(x) < 1/2^{1/q}$ 
since the function $\psi$ is decreasing on $\Rset_+$ and ${\psi(1/2^{1/q}) < 0}$.
Now it follow from Lemma~\ref{prop:distinct-components} that the vector $\xi$ has distinct components, which means that $f$ has only simple zeros in $\Kset$.
The second inequality in \eqref{eq:Weierstrass-2-iterated-contraction} follows from \eqref{eq:Weierstrass-1-iterated-contraction-A} taking into account that in this case $\|u\|_p \le E(x)$.
It remains to prove ${Tx \in \mathcal{D}}$ and the first inequality in \eqref{eq:Weierstrass-2-iterated-contraction}.
Applying Proposition~\ref{prop:inequality-2} with $u = Tx$ and $v = x$, we get
\begin{equation} \label{eq:lower-estimate-d(tx)-A}
d(Tx) \succeq \left( 1 - 2^{1/q} \left\| \frac{Tx - x}{d(x)} \right\|_p \right) d(x).
\end{equation}
From the triangle inequality and the second inequality in \eqref{eq:Weierstrass-2-iterated-contraction}, we obtain
\[
\|Tx - x\| \preceq \|Tx - \xi\| + \|x - \xi\| \preceq (1 + \beta(E(x)) \|x - \xi\|
\]
which yields
\[
\left\| \frac{Tx - x}{d(x)} \right\| \preceq (1 +  \beta(E(x)) \left\| \frac{x - \xi}{d(x)} \right\| .
\]
Taking the $p$-norm, we get
\[
\left\| \frac{Tx - x}{d(x)} \right\|_p \le E(x) (1 + \beta(E(x)) .
\]
Now it follows from \eqref{eq:lower-estimate-d(tx)-A} that
\begin{equation} \label{eq:lower-estimate-d(tx)}
d(Tx) \succeq \psi(E(x)) \, d(x).
\end{equation}
It follows from this and \eqref{eq:Weierstrass-2-initial-condition} that ${Tx \in \mathcal{D}}$.
Combining \eqref{eq:lower-estimate-d(tx)} with the second inequality in \eqref{eq:Weierstrass-2-iterated-contraction} 
and taking into account 
\eqref{eq:Weierstrass-2-initial-condition}, we get
\begin{equation}  \label{eq:Weierstrass-2-FIC-A}
\left\| \frac{Tx - \xi}{d(Tx)} \right\| \preceq \phi(E(x)) \left\| \frac{x - \xi}{d(x)} \right\| ,
\end{equation}
where the real function $\phi$ is defined by ${\phi(t) = \beta(t) / \psi(t)}$.
Taking the $p$-norm in \eqref{eq:Weierstrass-2-FIC-A}, we obtain the first inequality in \eqref{eq:Weierstrass-2-iterated-contraction}.
\end{proof}

The next theorem is the first main result in this section. 

\begin{theorem} 
\label{thm:second-local-Convergence-theorem-Weierstrass}
Let $f \in \Kset[z]$ be a polynomial of degree $n \ge 2$ which splits in $\Kset$, ${\xi \in\Kset^n}$ be a root-vector of $f$ 
and $1 \le p \le \infty$.
Suppose ${x^0 \in \Kset^n}$ is an initial guess with distinct components satisfying
\begin{equation} \label{eq:second-local-Convergence-theorem-Weierstrass-initial-conditions}
h(E(x^0)) \le 2,
\end{equation}
where the function $E$ is defined by \eqref{eq:FIC2-SM} and the real function $h$ is defined by
\begin{equation} \label{eq:second-local-Convergence-theorem-Weierstrass-h}
h(t) = \left( 1 + 2 ^{1 / q} \, t \right)
\left( 1 + \frac{t}{(n - 1) ^{1 / p}} \right)^{n - 1} .
\end{equation}
Then $f$ has only simple zeros in $\Kset$ and the Weierstrass iteration \eqref{eq:Weierstrass-iteration} is well-defined and converges
to $\xi$ with error estimates
\begin{equation} \label{eq:second-local-Convergence-theorem-Weierstrass-error-estimates}
\|x^{k + 1} - \xi\| \preceq \theta \lambda^{2^k} \|x^k - \xi\|
\quad\text{and}\quad
\|x^k - \xi\| \preceq \theta^k \lambda^{2^k - 1} \|x^0 - \xi\|
\end{equation}
for all $k \ge 0$, where $\lambda = \phi(E(z^0))$, $\theta = \psi(E(z^0))$, the real function $\phi$ is defined by 
$\phi(t) = \beta(t) / \psi(t)$, and $\psi$ and $\beta$ are defined by \eqref{eq:Weierstrass-2-psi} and 
\eqref{eq:Weierstrass-2-beta}, respectively.
 
Moreover, if the inequality in \eqref{eq:second-local-Convergence-theorem-Weierstrass-initial-conditions} is strict, 
then the Weierstrass iteration converges quadratically to $\xi$. 
\end{theorem}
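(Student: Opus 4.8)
The plan is to deduce the theorem from the abstract local convergence result Theorem~\ref{thm:first-local-convergence-theorem-CMS}, applied to the Weierstrass iteration function $T$ defined by \eqref{Weierstrass-iteration-function} on the cone metric space $(\Kset^n,\|\cdot\|)$ over the solid vector space $(\Rset^n,\preceq)$, with the function of initial conditions $E$ from \eqref{eq:FIC2-SM} and the functions $\varphi$, $\beta$, $\psi$ supplied by Lemma~\ref{lem:Weierstrass-2}. The first task is to pin down the correct interval $J$. One checks that the fixed-point equation $\varphi(\tau)=\tau$ (where $\varphi(t)=t\beta(t)/\psi(t)$) is equivalent to $\beta(\tau)=\psi(\tau)$, and a short computation with \eqref{eq:Weierstrass-2-psi} and \eqref{eq:Weierstrass-2-beta} shows that this in turn is equivalent to $h(\tau)=2$, where $h$ is as in \eqref{eq:second-local-Convergence-theorem-Weierstrass-h}. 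Since $h$ is strictly increasing with $h(0)=1$ and $h(t)\to\infty$, there is a unique such $\tau>0$, and the initial condition \eqref{eq:second-local-Convergence-theorem-Weierstrass-initial-conditions} is precisely $E(x^0)\le\tau$. I therefore set $J=[0,\tau]$.

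Next I would establish that $\varphi$ is a gauge function of order $2$ on $J$. By Example~\ref{exmp:quasi-homogeneous-function-1}, $\beta$ is quasi-homogeneous of degree $1$ on $J$, so $t\beta(t)$ is quasi-homogeneous of degree $2$; since $\psi$ is positive and decreasing on $J$ (note $\psi(\tau)=\beta(\tau)>0$, whence $\psi>0$ on all of $J$), the factor $1/\psi$ is nondecreasing and $\varphi=t\beta/\psi$ is quasi-homogeneous of degree $2$ on $J$. As $\tau$ is a fixed point of $\varphi$, Proposition~\ref{prop:gauge-function-sufficient-condition} gives that $\varphi$ is a gauge function of order $2$ on $J$, \emph{strict} on $[0,\tau)$. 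I would then verify the standard requirements \eqref{eq:standard-condition-beta-1} and \eqref{eq:standard-condition-beta-2} for the control function $\beta$, taking $\phi=\beta/\psi$ (so that $\varphi=t\phi$) and $\psi$ as in \eqref{eq:Weierstrass-2-psi}, which satisfy $\beta=\phi\psi$ as in \eqref{eq:standard-condition-psi}. The key computation here is that $h(\tau)=2$ forces $0\le\beta(t)\le\beta(\tau)<1$ on $J$, so that $t\beta(t)$ is a strict gauge function of order $2$, giving \eqref{eq:standard-condition-beta-1}; condition \eqref{eq:standard-condition-beta-2} is immediate from $\beta=\phi\psi$ with $\psi>0$.

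Finally, I would feed these ingredients into the abstract theorem. For every $x$ with $E(x)\in J$ one has $\psi(E(x))\ge\psi(\tau)>0$, so Lemma~\ref{lem:Weierstrass-2} applies and yields both that $E$ is a function of initial conditions of $T$ with gauge $\varphi$ and that $T$ is an iterated contraction with respect to $E$ at $\xi$ with control function $\beta$; it also shows that $f$ has only simple zeros. To see that each $x^0\in\mathcal{D}$ with $E(x^0)\in J$ is an initial point, I invoke Proposition~\ref{prop:initial-point-test}: if $E(x)\in J$ then $Tx\in\mathcal{D}$ by Lemma~\ref{lem:Weierstrass-2}, and $E(Tx)\le\varphi(E(x))\in J$ because $\varphi$ maps $J$ into $J$. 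Theorem~\ref{thm:first-local-convergence-theorem-CMS} with $r=2$ and $S_k(2)=2^k-1$ then delivers convergence to $\xi$ together with the estimates \eqref{eq:second-local-Convergence-theorem-Weierstrass-error-estimates}, where $\lambda=\phi(E(x^0))$ and $\theta=\psi(E(x^0))$. For the last assertion, a strict inequality in \eqref{eq:second-local-Convergence-theorem-Weierstrass-initial-conditions} means $E(x^0)<\tau$, hence $\lambda=\beta(E(x^0))/\psi(E(x^0))<1$ (equivalently $h(E(x^0))<2$), which upgrades the $2^k$-type estimates to genuine quadratic convergence. The main obstacle I anticipate is the bookkeeping of the correspondence between the abstract triple $(\beta,\phi,\psi)$ of Theorem~\ref{thm:first-local-convergence-theorem-CMS} and the concrete functions of Lemma~\ref{lem:Weierstrass-2}, and in particular the observation that \eqref{eq:standard-condition-beta-1} concerns $t\beta(t)$ rather than $\varphi$, so that the bound $\beta(\tau)<1$ must be extracted separately from the identity $h(\tau)=2$.
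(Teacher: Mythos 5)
Your proposal is correct and takes essentially the same route as the paper's own proof: both identify $J=[0,R]$ with $R$ the unique solution of $h(t)=2$, use the equivalence $h(R)=2 \Leftrightarrow \beta(R)=\psi(R)$ to see that $R$ is a fixed point of $\varphi(t)=t\,\beta(t)/\psi(t)$, apply Proposition~\ref{prop:gauge-function-sufficient-condition} to get the gauge function of order $2$, verify \eqref{eq:standard-condition-beta-1} and \eqref{eq:standard-condition-beta-2} via $\beta(R)<1$ and $\beta=\phi\,\psi$, and combine Lemma~\ref{lem:Weierstrass-2} with Proposition~\ref{prop:initial-point-test} before invoking Theorem~\ref{thm:first-local-convergence-theorem-CMS}. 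Your explicit justification of the final quadratic-convergence claim (strict inequality in \eqref{eq:second-local-Convergence-theorem-Weierstrass-initial-conditions} gives $E(x^0)<R$, hence $\lambda=\phi(E(x^0))<1$) is a small addition the paper leaves implicit, but it does not change the argument.
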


\begin{proof}
The function $h$ is increasing and continuous on $\Rset_+$ with
${h(0) = 1}$ and ${h(1 / 2^{1/q}) > 2}$. Therefore, there exists a unique solution
${R = R(n, p)}$  of the equation ${h(t) = 2}$ in the interval ${(0, 1 / 2^{1/q)}}$. 
Hence, the initial condition \eqref{eq:second-local-Convergence-theorem-Weierstrass-initial-conditions} is equivalent to $E(x^0) \in J$, where $J = [0,R]$.
It follows from ${h(R) = 2}$ that
\begin{equation} \label{eq:beta=psi}
\beta(R) = \psi(R) = \frac{1 - b R}{1 + b R} \, . 
\end{equation}
The function $\psi$ is decreasing on $J$ with
$\psi(0) = 1$ and $\psi(R) > 0$. Hence, 
\begin{equation} \label{eq:range-psi}
0 < \psi(t) \le 1
\quad\text{far all } \, t \in J
\end{equation}
The function $\beta$ is increasing on $J$ with $\beta(0) = 1$ and $\beta(R) < 1$. From Example~\ref{exmp:quasi-homogeneous-function-1}, 
we conclude that ${t \beta(t)}$ is a strict gauge function of the second order on $J$. It is easy to see that $\varphi$ is a quasi-homogeneous gauge function of the second degree on $J$. On the other hand, \eqref{eq:beta=psi} implies that $R$ is a fixed point of 
$\varphi$.
Then according to Proposition~\ref{prop:gauge-function-sufficient-condition}, $\varphi$ is a gauge function of order ${r = 2}$ on $J$. 

It follows from \eqref{eq:range-psi} that for every $x \in \mathcal{D}$ with $E(x) \in J$ 
condition \eqref{eq:Weierstrass-2-initial-condition} holds. 
Hence, Lemma~\ref{lem:Weierstrass-2} and Proposition~\ref{prop:initial-point-test} show that:

$\bullet$ ${E \colon \mathcal{D} \to \Rset_+}$ is a functions of initial conditions of $T$ with gauge function $\varphi$ 
of order ${r = 2}$ on $J$.

$\bullet$ ${T \colon \mathcal{D} \to \Kset^n}$ is an iterated contraction with respect to $E$ at the point $\xi$ 
with control function $\beta$. 

$\bullet$ Every vector ${x^0 \in \mathcal{D}}$ satisfying condition \eqref{eq:second-local-Convergence-theorem-Weierstrass-initial-conditions} is an initial point of $T$.

Now the statement of the theorem follows from Theorem~\ref{thm:first-local-convergence-theorem-CMS}.
\end{proof}

In the next lemma we give a lower bound for the quantity $R = R(n, p)$ defined in the proof of 
Theorem~\ref{thm:first-local-Convergence-theorem-Weierstrass}. 

\begin{lemma} \label{lem:second-local-Convergence-theorem-Weierstrass-R-lower-bound}
Let $n \ge 2$ and $1 \le p \le \infty$. Denote by $R = R(n, p)$ the unique positive solution of the equation $h(t) = 2$, where the real function $h$ is defined by \eqref{eq:second-local-Convergence-theorem-Weierstrass-h}. Then
\begin{equation} \label{eq:second-local-Convergence-theorem-Weierstrass-R-lower-bound}
R \ge \frac{n \, (2^{1 / n} - 1)}{(n - 1)^{1 / q} + 2^{1 / q}} \, .
\end{equation}
This inequality becomes an equality if and only if $n = 2$ and $p = 1$.
\end{lemma}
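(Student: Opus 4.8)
The plan is to use the monotonicity of $h$ to replace the inequality $R \ge R_0$ (where $R_0$ denotes the right-hand side of \eqref{eq:second-local-Convergence-theorem-Weierstrass-R-lower-bound}) by a single numerical inequality, and then to prove that inequality by the arithmetic--geometric mean inequality. As noted in the proof of Theorem~\ref{thm:second-local-Convergence-theorem-Weierstrass}, the function $h$ is continuous and strictly increasing on $\Rset_+$, with $h(R)=2$. Hence $R \ge R_0$ is equivalent to $h(R_0) \le 2$, and $R = R_0$ is equivalent to $h(R_0)=2$. Everything therefore reduces to estimating $h(R_0)$, and the value $R_0$ is positive since $2^{1/n}>1$ for $n \ge 2$.

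First I would write $h(R_0)$ as a product of $n$ positive reals. Setting
\[
w_1 = 1 + 2^{1/q} R_0, \qquad w_2 = \cdots = w_n = 1 + \frac{R_0}{(n-1)^{1/p}},
\]
definition \eqref{eq:second-local-Convergence-theorem-Weierstrass-h} gives $h(R_0) = \prod_{i=1}^n w_i = \bigl(M_0(w)\bigr)^n$, where $w=(w_1,\dots,w_n)$. The Power Mean Inequality (Proposition~\ref{prop:power-mean-inequality}) in the form $M_0(w) \le M_1(w)$ then bounds this product by the $n$th power of the arithmetic mean of the $w_i$, with equality precisely when all the $w_i$ coincide.

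The key computation---the step in which the specific constant defining $R_0$ does its work---is that the arithmetic mean is exactly $2^{1/n}$. Indeed,
\[
M_1(w) = \frac{1}{n}\left[(1 + 2^{1/q} R_0) + (n-1)\Bigl(1 + \frac{R_0}{(n-1)^{1/p}}\Bigr)\right] = 1 + \frac{R_0}{n}\Bigl(2^{1/q} + (n-1)^{1/q}\Bigr),
\]
where I have used $(n-1)\,(n-1)^{-1/p} = (n-1)^{1-1/p} = (n-1)^{1/q}$. Substituting the value of $R_0$ makes the fraction collapse to $2^{1/n}-1$, so $M_1(w) = 2^{1/n}$. Consequently $h(R_0) = \bigl(M_0(w)\bigr)^n \le \bigl(M_1(w)\bigr)^n = 2$, and by the monotonicity of $h$ this yields $R \ge R_0$.

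For the equality claim I would trace the single inequality that was used: $R = R_0$ holds iff $h(R_0)=2$ iff $M_0(w)=M_1(w)$, which by Proposition~\ref{prop:power-mean-inequality} occurs iff $w_1 = \cdots = w_n$, i.e. $2^{1/q} = (n-1)^{-1/p}$, equivalently $2^{1/q}(n-1)^{1/p} = 1$. Taking logarithms, since $\ln 2 > 0$, $\ln(n-1) \ge 0$ and $1/p,\,1/q \ge 0$ are all nonnegative, the product equals $1$ only when both $\tfrac1q\ln 2 = 0$ and $\tfrac1p\ln(n-1)=0$; the first forces $q=\infty$, hence $p=1$, and then the second forces $n-1=1$, hence $n=2$. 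Conversely $n=2,\ p=1$ gives $2^{1/q}=(n-1)^{1/p}=1$, so equality holds. I expect no real obstacle in this argument: its only genuine content is recognizing the AM--GM structure hidden in $h$ and observing that $R_0$ has been engineered so that the arithmetic mean of the factors $w_i$ is exactly $2^{1/n}$; once this is seen, the remaining steps are direct computations.
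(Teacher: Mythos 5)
Your proof is correct. It takes a recognizably different route from the paper's: the paper works at the unknown root $R$ itself, setting $a=(n-1)^{1/q}$, $b=2^{1/q}$, $c=(n-1)^{1/p}$ and applying Bernoulli's inequality $(1+t)^n\ge 1+nt$ in the rearranged form $\left(\tfrac{1+(a+b)R/n}{1+R/c}\right)^n \ge \tfrac{1+bR}{1+R/c}$, which combined with $h(R)=2$ gives $\left(1+(a+b)R/n\right)^n\ge 2$ and hence the bound after solving for $R$; equality in Bernoulli forces $bc=1$, i.e.\ $n=2$ and $p=1$. You instead evaluate at the candidate bound $R_0$, show $h(R_0)\le 2$ via the power mean inequality $M_0\le M_1$ applied to the $n$ factors of $h(R_0)$ (after the key observation that $R_0$ is engineered so the arithmetic mean is exactly $2^{1/n}$), and transfer to $R\ge R_0$ by monotonicity of $h$. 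The two arguments are essentially dual: AM--GM on $n$ positive factors, $n-1$ of which coincide, is equivalent to the paper's Bernoulli rearrangement, and both equality analyses collapse to the same condition $2^{1/q}(n-1)^{1/p}=1$. What your route buys is a ready-made equality criterion from Proposition~\ref{prop:power-mean-inequality}, already stated in the paper, and a cleaner computation (no algebraic massaging inside Bernoulli); what it costs is the extra monotonicity step --- you need $h$ strictly increasing (or just uniqueness of the solution of $h(t)=2$, which the lemma's statement grants), whereas the paper never compares $h$-values at two points. Strictness is immediate anyway, since $t\mapsto 1+2^{1/q}t$ is strictly increasing and the second factor of $h$ is nondecreasing and at least $1$, so there is no gap.
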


\begin{proof}
Let $a = (n - 1)^{1 / q}$, $b = 2^{1 / q}$ and $c = (n - 1)^{1 / p}$. Note that $a$, $b$ and $c$ are greater than 1 and $a \, c = n - 1$.
By the definition of $R$, we get
\begin{equation} \label{eq:h(R)}
(1 + b \, R) \left( 1 + R / c \right)^{n - 1} = 2.
\end{equation}
Using Bernoulli's inequality ${(1 + t)^n  \ge 1 + n t}$, we obtain
\[
\left( \frac{1 + (a + b) R / n}{1 + R / c} \right)^n = 
\left( 1 +\frac{(a + b) R / n - R / c}{1 + R / c} \right)^n \ge \frac{1 + b R}{1 + R / c}. 
\]
From this and \eqref{eq:h(R)}, we get
\begin{equation} \label{eq:Bernoulli}
 \left( 1 + (a + b) R / n \right)^n \ge 2
\end{equation}
which proves \eqref{eq:second-local-Convergence-theorem-Weierstrass-R-lower-bound}. 
The equality in \eqref{eq:Bernoulli}  holds if and only if ${b \, c = 1}$
which is equivalent to ${n = 2}$ and ${p = 1}$.
\end{proof}

Theorem~\ref{thm:first-local-Convergence-theorem-Weierstrass} together with 
Lemma~\ref{lem:second-local-Convergence-theorem-Weierstrass-R-lower-bound} immediately implies the following result, which
improves and complements the result of Han \cite{Han00} as well some other results.

\begin{corollary} \label{cor:second-local-Convergence-theorem-Weierstrass-Han}
Let $f \in \Kset[z]$ be a polynomial of degree $n \ge 2$ which splits in $\Kset$, ${\xi \in\Kset^n}$ be a root-vector of $f$ and 
$1 \le p \le \infty$.
If $x^0 \in \Kset^n$ is an initial guess with distinct components satisfying
\begin{equation} \label{eq:initial-condition-Han}
	E(x^0) = \left\| \frac{x^0 - \xi}{d(x^0)} \right\|_p \le 	\frac{n \, (2^{1 / n} - 1)}{(n - 1)^{1 / q} + 2^{1 / q}} \, ,
\end{equation}
then $f$ has only simple zeros in $\Kset$ and the Weierstrass iteration \eqref{eq:Weierstrass-iteration} is well-defined and converges
to $\xi$ with error estimates \eqref{eq:second-local-Convergence-theorem-Weierstrass-error-estimates}.
Moreover, the convergence is quadratic provided that ${n \ge 3}$ or ${p > 1}$ or \eqref{eq:initial-condition-Han} holds with strict inequality.
\end{corollary}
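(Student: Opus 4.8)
The plan is to obtain the corollary as a direct consequence of Theorem~\ref{thm:second-local-Convergence-theorem-Weierstrass} and the lower bound established in Lemma~\ref{lem:second-local-Convergence-theorem-Weierstrass-R-lower-bound}. The first step is to reformulate the hypothesis of the theorem in terms of $R = R(n,p)$. Since the function $h$ defined by \eqref{eq:second-local-Convergence-theorem-Weierstrass-h} is continuous and strictly increasing on $\Rset_+$ with $h(0) = 1$ and $h(t) \to \infty$, the inequality $h(E(x^0)) \le 2$ is equivalent to $E(x^0) \le R$, where $R$ is the unique positive root of $h(t) = 2$. Thus verifying the initial condition \eqref{eq:second-local-Convergence-theorem-Weierstrass-initial-conditions} of the second-kind theorem reduces to the scalar estimate $E(x^0) \le R$.

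The second step is to feed in the lower bound from Lemma~\ref{lem:second-local-Convergence-theorem-Weierstrass-R-lower-bound}, namely $R \ge n(2^{1/n}-1)/((n-1)^{1/q} + 2^{1/q})$. Combining this with the assumed initial condition \eqref{eq:initial-condition-Han} yields $E(x^0) \le R$, so that $h(E(x^0)) \le 2$ and Theorem~\ref{thm:second-local-Convergence-theorem-Weierstrass} applies verbatim: $f$ has only simple zeros in $\Kset$, the Weierstrass iteration \eqref{eq:Weierstrass-iteration} is well-defined and converges to $\xi$, and the error estimates \eqref{eq:second-local-Convergence-theorem-Weierstrass-error-estimates} hold. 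This settles the first assertion of the corollary.

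For the quadratic-convergence claim I would track the strictness of the inequalities carefully, since Theorem~\ref{thm:second-local-Convergence-theorem-Weierstrass} guarantees quadratic convergence precisely when the initial inequality is strict, i.e. when $E(x^0) < R$. The equality clause of Lemma~\ref{lem:second-local-Convergence-theorem-Weierstrass-R-lower-bound} states that the displayed lower bound for $R$ is attained only when $n = 2$ and $p = 1$; hence for $n \ge 3$ or $p > 1$ the bound is strict, $n(2^{1/n}-1)/((n-1)^{1/q}+2^{1/q}) < R$, and \eqref{eq:initial-condition-Han} already forces $E(x^0) < R$. If instead \eqref{eq:initial-condition-Han} holds with strict inequality, then $E(x^0) < n(2^{1/n}-1)/((n-1)^{1/q}+2^{1/q}) \le R$ again gives $E(x^0) < R$. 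In each of the three listed cases we obtain $E(x^0) < R$, whence the convergence is quadratic; the only excluded configuration is $n = 2$, $p = 1$ together with equality in \eqref{eq:initial-condition-Han}, in which case $E(x^0) = R$ and only plain convergence is guaranteed. The sole point requiring care, and the main obstacle, is precisely this bookkeeping of strict versus non-strict inequalities, combined with correctly invoking the second-kind theorem whose hypothesis is phrased through the function $h$.
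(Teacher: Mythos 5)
Your proposal is correct and follows exactly the route the paper intends: the paper derives this corollary immediately from Theorem~\ref{thm:second-local-Convergence-theorem-Weierstrass} (whose hypothesis $h(E(x^0))\le 2$ is equivalent to $E(x^0)\le R$ by monotonicity of $h$) combined with Lemma~\ref{lem:second-local-Convergence-theorem-Weierstrass-R-lower-bound}, with the quadratic-convergence clause coming from the lemma's equality case ($n=2$, $p=1$) precisely as you argue. Your write-up simply makes explicit the strict-versus-non-strict bookkeeping that the paper leaves implicit.
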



\begin{corollary} \label{cor:second-local-Convergence-theorem-Weierstrass-Wang-Zhao}
Let ${f \in \Kset[z]}$ be a polynomial of degree $n \ge 2$ which splits in $\Kset$, ${\xi \in\Kset^n}$ be a root-vector of $f$ 
and $1 \le p \le \infty$.
If ${x^0 \in \Kset^n}$ is a vector with distinct components satisfying
\begin{equation} \label{eq:initial-condition-Wang_Zhao}
	E(x^0) = \left\| \frac{x^0 - \xi}{d(x^0)} \right\|_p \le R =
	\frac{2^{1/(n - 1)}-1}{2^{1 + 1/q} \left( 2^{1/(n - 1)}-1 \right) + (n-1)^{-1/p}} \, ,
\end{equation}
then $f$ has only simple zeros in $\Kset$ and the Weierstrass iteration \eqref{eq:Weierstrass-iteration} is well-defined and converges quadratically to $\xi$ 
with error estimates \eqref{eq:second-local-Convergence-theorem-Weierstrass-error-estimates}.
\end{corollary}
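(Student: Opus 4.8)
The plan is to deduce this corollary directly from Theorem~\ref{thm:second-local-Convergence-theorem-Weierstrass}, whose initial condition \eqref{eq:second-local-Convergence-theorem-Weierstrass-initial-conditions} is phrased through the same function of initial conditions \eqref{eq:FIC2-SM}. Since the present hypothesis bounds $E(x^0)$ and the theorem asks for $h(E(x^0)) \le 2$, with strict inequality yielding quadratic convergence, it suffices to show that the explicit radius $R$ of \eqref{eq:initial-condition-Wang_Zhao} forces $h(E(x^0)) < 2$. As $h$ from \eqref{eq:second-local-Convergence-theorem-Weierstrass-h} is increasing on $\Rset_+$ (recorded in the proof of Theorem~\ref{thm:second-local-Convergence-theorem-Weierstrass}), the entire problem collapses to the single inequality $h(R) < 2$; the error estimates \eqref{eq:second-local-Convergence-theorem-Weierstrass-error-estimates} and the quadratic rate are then inherited verbatim from the ``moreover'' clause of that theorem.

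First I would record the elementary reformulation of the target inequality in terms of $\beta$ from \eqref{eq:Weierstrass-2-beta}. Writing $b = 2^{1/q}$ and expanding $h(t) = (1 + b\,t)(1 + \beta(t))$, a direct rearrangement (which is nothing but $\beta < \psi$) gives
\[
h(t) < 2 \iff \beta(t) + b\,t\,(1 + \beta(t)) < 1 .
\]
Next I would set $a = 2^{1/(n-1)} - 1$ and $c = (n-1)^{1/p}$, so that $(1 + a)^{n-1} = 2$, whence $\beta(ac) = 1$, and observe that the stated radius is precisely $R = ac/(1 + 2b\,ac)$, which lies strictly below $ac$. The decisive estimate is then a chord bound for $\beta$ on $[0, ac]$: the function $\beta$ is convex and increasing with $\beta(0) = 0$ and $\beta(ac) = 1$, hence $\beta(t) \le t/(ac)$ and $\beta(t) < 1$ for $0 \le t < ac$. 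Substituting $t = R$ and using $1 + \beta(R) < 2$ gives
\[
\beta(R) + b\,R\,(1 + \beta(R)) < \frac{R}{ac} + 2b\,R = R\Big(\frac{1}{ac} + 2b\Big) = \frac{ac}{1 + 2b\,ac}\cdot\frac{1 + 2b\,ac}{ac} = 1 ,
\]
so that $h(R) < 2$ and therefore $h(E(x^0)) \le h(R) < 2$.

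The main (indeed the only) obstacle is exactly this verification of $h(R) < 2$: a frontal expansion of $(1 + t/(n-1)^{1/p})^{n-1}$ is unwieldy, whereas the convexity/chord bound for $\beta$ combined with the identity $(1 + a)^{n-1} = 2$ reduces it to the one-line computation above. I would finally point out that the inequality remains strict for \emph{every} $n \ge 2$ and every $p$: even in the degenerate case $n = 2$, where $\beta$ is linear and $\beta(t) = t/(ac)$ holds with equality, the required strictness is still supplied by the factor $1 + \beta(R) < 2$. This uniform strictness is what guarantees quadratic convergence in all cases, with no exceptional subcases as in Corollary~\ref{cor:second-local-Convergence-theorem-Weierstrass-Han}.
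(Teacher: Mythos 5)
Your proof is correct, but it takes a genuinely different route from the paper's. The paper deduces the corollary in one line from Corollary~\ref{cor:second-local-Convergence-theorem-Weierstrass-Han}: since the radius $R$ in \eqref{eq:initial-condition-Wang_Zhao} is strictly smaller than the bound $n(2^{1/n}-1)/\bigl((n-1)^{1/q}+2^{1/q}\bigr)$ in \eqref{eq:initial-condition-Han} (the printed proof states this comparison in the reversed direction, evidently a typo), the hypothesis implies Han's condition with strict inequality, and quadratic convergence follows; that chain ultimately rests on Lemma~\ref{lem:second-local-Convergence-theorem-Weierstrass-R-lower-bound}, where Bernoulli's inequality bounds from below the root of $h(t)=2$. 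You instead bypass Corollary~\ref{cor:second-local-Convergence-theorem-Weierstrass-Han} and Lemma~\ref{lem:second-local-Convergence-theorem-Weierstrass-R-lower-bound} entirely and verify $h(R)<2$ directly from Theorem~\ref{thm:second-local-Convergence-theorem-Weierstrass}: your reformulation $h(t)<2 \iff \beta(t)+b\,t\,(1+\beta(t))<1$ (which is indeed $\beta<\psi$, by \eqref{eq:Weierstrass-2-psi-equivalent}) is exact; the identities $\beta(ac)=1$ and $R=ac/(1+2b\,ac)$ with $a=2^{1/(n-1)}-1$, $c=(n-1)^{1/p}$ check out; and the convexity/chord bound $\beta(t)\le t/(ac)$ on $[0,ac]$ together with $\beta(R)<1$ gives $\beta(R)+b\,R\,(1+\beta(R)) < R/(ac)+2b\,R = 1$, with the strictness correctly traced to the factor $1+\beta(R)<2$ even in the linear case $n=2$, $p=1$ where the chord bound is an equality. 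What your route buys: it is self-contained (the paper never actually verifies the inequality between the two radii, it only asserts it), and it delivers $h(R)<2$ strictly for every $n\ge 2$ and every $1\le p\le\infty$, so quadratic convergence comes out uniformly, without the exceptional-case clause that Corollary~\ref{cor:second-local-Convergence-theorem-Weierstrass-Han} carries. What the paper's route buys is brevity and reuse of the already-established Bernoulli bound, at the cost of leaving the decisive radius comparison to the reader.
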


\begin{proof}
It follows from Corollary~\ref{cor:second-local-Convergence-theorem-Weierstrass-Han} because the right side of 
\eqref{eq:initial-condition-Han} is less than the right side of \eqref{eq:initial-condition-Wang_Zhao}
\end{proof}

Corollary \eqref{cor:second-local-Convergence-theorem-Weierstrass-Wang-Zhao} is a generalization and improvement 
of the result of Wang and Zhao \cite{WZ91}. 
They have proved the convergence of the Weierstrass iteration \eqref{eq:Weierstrass-iteration} for a polynomial
${f \in \Cset[z]}$ under the stronger initial condition
\begin{equation}
\|x^0 - \xi \|_\infty < \frac{\sqrt[n - 1]{2} - 1}{4 \sqrt[n - 1]{2}-3} \, \min_{1 \le i \le n} d_i(x^0) .
\end{equation}
than \eqref{eq:initial-condition-Wang_Zhao} in the case ${p = \infty}$ .

\medskip
The next theorem is the second main result in this section. 

\begin{theorem} 
\label{thm:third-local-convergence-theorem-Weierstrass}
Let $f \in \Kset[z]$ be a polynomial of degree $n \ge 2$ which splits in $\Kset$, ${\xi \in\Kset^n}$ be a root-vector of $f$ 
and $1 \le p \le \infty$.
Suppose $c \colon [0,1) \to \Rset_+$ is a nondecreasing function such that
\begin{equation} \label{eq:condition-c}
	c(t) \le (n - 1)^{1 / p} \, ( (1 + t)^{1 / (n - 1)} - 1 )
	\quad\text{for all}\quad t \in [0,1).
\end{equation}
Suppose also there exists ${\sigma \in (0,1)}$ such that
\begin{equation} \label{eq:condition-sigma}
	t \, c(t) \le c(t^2) \, ( 1 - 2^{1 / q} \, (1 + t) \, c(t)
	\quad\text{for all}\quad t \in [0,\sigma],
\end{equation}
where $1 \le q \le \infty$ and $1/p + 1/q = 1$.
Let $x^0 \in \Kset^n$ be a vector with distinct components satisfying
\begin{equation} \label{eq:third-local-convergence-theorem-Weierstrass-initial-condition}
\left\| \frac{x^0 - \xi}{d(x^0)} \right\|_p \le c(\sigma).
\end{equation}
Then $f$ has only simple zeros in $\Kset$ and the Weierstrass iteration \eqref{eq:Weierstrass-iteration} is well-defined and converges quadratically to $\xi$ with error estimates
\begin{equation} 
\|x^{k+1}  - \xi\|  \preceq  \sigma^{2^k} \|x^k  - \xi\|
\quad\text{and}\quad
\|x^k  - \xi\|  \preceq  \sigma^{2^k - 1} \|x^0  - \xi\|
\end{equation}
for all $k \ge 0$.
\end{theorem}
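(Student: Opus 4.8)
The plan is to deduce this theorem from the abstract Theorem~\ref{thm:third-local-convergence-theorem-CMS} with $r=2$, applied to the Weierstrass iteration function $T$ of \eqref{Weierstrass-iteration-function} acting on the cone normed space $(\Kset^n,\|\cdot\|)$ over $\Rset^n$, equipped with the second-kind function of initial conditions $E$ of \eqref{eq:FIC2-SM}. All the analytic content about the Weierstrass correction is already packaged in Lemma~\ref{lem:Weierstrass-2}: for every $x\in\mathcal{D}$ with $\psi(E(x))>0$ it delivers $Tx\in\mathcal{D}$ together with the two defining estimates $E(Tx)\le\varphi(E(x))$ and $\|Tx-\xi\|\preceq\beta(E(x))\,\|x-\xi\|$, where $\beta$ is given by \eqref{eq:Weierstrass-2-beta}, $\psi$ by \eqref{eq:Weierstrass-2-psi}, and $\varphi(t)=t\,\beta(t)/\psi(t)$; it also yields that $f$ has only simple zeros. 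Thus the whole proof reduces to organizing these facts on a suitable interval and verifying the two structural conditions $\beta(c(t))\le t$ and $\varphi(c(t))\le c(t^{2})$ of \eqref{eq:third-local-convergence-theorem-CMS}.

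First I would fix $J=[0,c(\sigma)]$ (the degenerate case $c(\sigma)=0$ forces $x^{0}=\xi$ and is trivial) and show that $\varphi$ is a nondecreasing gauge function of order $2$ on $J$. By Example~\ref{exmp:quasi-homogeneous-function-1} the function $\beta$ is quasi-homogeneous of degree $1$, so $\varphi(t)=t^{2}\,(\beta(t)/t)/\psi(t)$ is quasi-homogeneous of degree $2$ on any interval on which $\psi>0$, because $\beta(t)/t$ is nondecreasing and $1/\psi$ is positive and nondecreasing there. To secure $\psi>0$ on $J$ I would use $\beta(c(\sigma))\le\sigma$ to get $\psi(c(\sigma))\ge 1-2^{1/q}(1+\sigma)c(\sigma)$, and then read off from \eqref{eq:condition-sigma} at $t=\sigma$ (using $c(\sigma^{2})\le c(\sigma)$) that $2^{1/q}(1+\sigma)c(\sigma)\le 1-\sigma$, whence $\psi(c(\sigma))\ge\sigma>0$; since $\psi$ is decreasing, $\psi>0$ on all of $J$. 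Finally \eqref{eq:condition-sigma} gives $\varphi(c(\sigma))\le c(\sigma^{2})\le c(\sigma)$, and a quasi-homogeneous function of degree $\ge 1$ that does not exceed its argument at the right endpoint cannot exceed it anywhere on $J$; this is the sub-fixed-point version of Proposition~\ref{prop:gauge-function-sufficient-condition} and makes $\varphi$ a gauge function of order $2$ on $J$.

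The core of the argument is translating the two hypotheses into \eqref{eq:third-local-convergence-theorem-CMS}. Isolating $c(t)$ in \eqref{eq:condition-c} and raising to the power $n-1$ shows that \eqref{eq:condition-c} is literally $\beta(c(t))\le t$. For the second condition I would write $\varphi(c(t))=c(t)\,\beta(c(t))/\psi(c(t))$ and bound numerator and denominator separately: $\beta(c(t))\le t$ gives $c(t)\beta(c(t))\le t\,c(t)$, while the identity $\psi(s)=1-2^{1/q}s(\beta(s)+1)$ combined with $\beta(c(t))\le t$ gives $\psi(c(t))\ge 1-2^{1/q}(1+t)c(t)$. Feeding both bounds into \eqref{eq:condition-sigma} produces the chain $c(t)\beta(c(t))\le t\,c(t)\le c(t^{2})\bigl(1-2^{1/q}(1+t)c(t)\bigr)\le c(t^{2})\psi(c(t))$, i.e. $\varphi(c(t))\le c(t^{2})$; here the positivity of $1-2^{1/q}(1+t)c(t)$ is automatic, since the left-hand side $t\,c(t)$ of \eqref{eq:condition-sigma} is nonnegative.

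It then remains to assemble the hypotheses of Theorem~\ref{thm:third-local-convergence-theorem-CMS}. For $x\in\mathcal{D}$ with $E(x)\in J$ one has $\psi(E(x))\ge\psi(c(\sigma))>0$, so Lemma~\ref{lem:Weierstrass-2} shows that $E$ is a function of initial conditions of $T$ with gauge function $\varphi$ on $J$, that $T$ is an iterated contraction with respect to $E$ at $\xi$ with control function $\beta$, and that $Tx\in\mathcal{D}$; by Proposition~\ref{prop:initial-point-test} this last implication makes every $x^{0}\in\mathcal{D}$ with $E(x^{0})\le c(\sigma)$ an initial point of $T$, and \eqref{eq:third-local-convergence-theorem-Weierstrass-initial-condition} is exactly the hypothesis \eqref{eq:third-local-convergence-theorem-CMS-initial-condition}. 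Since the cone metric here is $d(x,y)=\|x-y\|$, Theorem~\ref{thm:third-local-convergence-theorem-CMS} with $r=2$ and $S_{k}(2)=2^{k}-1$ then yields precisely the asserted estimates $\|x^{k+1}-\xi\|\preceq\sigma^{2^{k}}\|x^{k}-\xi\|$ and $\|x^{k}-\xi\|\preceq\sigma^{2^{k}-1}\|x^{0}-\xi\|$, with simplicity of the zeros coming from Lemma~\ref{lem:Weierstrass-2} at $x^{0}$. The step I expect to demand the most care is the second paragraph together with the denominator estimate in the third: everything hinges on first establishing $\beta(c(t))\le t$ and then using it both to force $\psi>0$ on $J$ and to make the inequality chain for $\varphi(c(t))\le c(t^{2})$ close up.
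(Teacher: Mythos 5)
Your proposal is correct and follows essentially the same route as the paper: it reduces the statement to Theorem~\ref{thm:third-local-convergence-theorem-CMS} with $r=2$ via Lemma~\ref{lem:Weierstrass-2} and Proposition~\ref{prop:initial-point-test}, reads \eqref{eq:condition-c} as $\beta(c(t))\le t$, and derives $\varphi(c(t))\le c(t^2)$ from \eqref{eq:condition-sigma} through the same chain $c(t)\,\beta(c(t))\le t\,c(t)\le c(t^2)\,\bigl(1-2^{1/q}(1+t)\,c(t)\bigr)\le c(t^2)\,\psi(c(t))$ that the paper uses. If anything, your treatment is slightly more careful than the paper's: you handle the degenerate case $c(\sigma)=0$, and you obtain the positivity of $\psi$ on $J$ from \eqref{eq:condition-sigma} at $t=\sigma$ instead of relying on the paper's unproved (and, as stated for all $t\in[0,1)$, questionable) claim that $1-2^{1/q}(1+t)\,c(t)>0$.
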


\begin{proof}
For the sake of simplicity, let $b = 2^{1 / q}$.
It is easy to show that ${1 - b \, (1 + t) \, c(t) > 0}$ for all ${t \in [0,1)}$.
Define the functions $\psi$ and $\beta$ by \eqref{eq:Weierstrass-2-psi} and 
\eqref{eq:Weierstrass-2-beta} on the interval $J = [0, c(\sigma)]$.
It is easy to show that the function $\psi$ can also be rewritten in the form
\begin{equation} \label{eq:Weierstrass-2-psi-equivalent}
\psi(t) = 1 - 2^{1 / q} \, t \, (1 + \beta(t)).
\end{equation}
Condition \eqref{eq:condition-c} can be rewritten in the following equivalent form
\begin{equation}  \label{eq:beta-c-equivalent}
\beta(c(t)) \le t \quad\text{for all } \, t \in [0,1).
\end{equation}
Note that $\beta$ is an increasing function on $J$ with values in ${[0,1)}$ since
\[
{\beta(t) \le \beta(c(\sigma)) \le \sigma <1} \quad\text{for all}\quad t \in J.
\] 
The function $\psi$ is a decreasing function on $J$ with values in ${(0,1]}$ since
\[
\psi(t) \ge \psi(c(\sigma)) = 1 - b \, c(\sigma) (1 + \beta(c(\sigma))) \ge 1 - b \, (1 + \sigma) \, c(\sigma) > 0
\quad\text{for all } \, t \in J. 
\]
Therefore, we can define the function $\phi$ on $J$ by $\varphi(t) = t \,\beta(t) / \psi(t)$.
Condition \eqref{eq:condition-sigma} implies the following one
\begin{equation}  \label{eq:third-local-convergence-theorem-Weierstrass-c2-A}
\varphi(c(t)) \le c(t^2) \quad\text{for all } \, t \in [0,\sigma].
\end{equation}
Indeed, it follows from \eqref{eq:Weierstrass-2-psi-equivalent}, \eqref{eq:beta-c-equivalent}  and \eqref{eq:condition-sigma} that
\[
\varphi(c(t) = \frac{c(t) \, \beta(c(t)}{\psi(c(t))}
\le \frac{t \, c(t)}{1 - b \, (1 + t) \, c(t)} \le c(t^2).
\]
Note also that the function $\varphi$ is nondecreasing on $J$ and $\varphi(J) \subset J$ since
\[
\varphi(t) \le \varphi(c(\sigma)) = c(\sigma^r) \le c(\sigma)
\quad\text{for all}\quad t \in J.
\]
Consider again the Weierstrass iteration function ${T \colon \mathcal{D} \to \Kset^n}$ and the function 
${E \colon \mathcal{D} \to \Rset_+}$ defined by \eqref{eq:FIC2-SM}.  
For every $x \in D$ with $E(x) \in J$ condition \eqref{eq:Weierstrass-2-initial-condition} holds. Indeed, we have
\[
\psi(E(x)) \ge \psi(c(\sigma)) > 0.
\]
It follows from Lemma~\ref{lem:Weierstrass-2} that $E$ is a functions of initial conditions of $T$ with gauge function $\varphi$ on $J$,
and that $T$ is an iterated contraction with respect to $E$ at the point $\xi$ with control function $\beta$.
Now the statements of the theorem follow from Theorem~\ref{thm:third-local-convergence-theorem-CMS}.
\end{proof}

There are a lot of functions that satisfy condition \eqref{eq:condition-c} of 
Theorem~\ref{thm:third-local-convergence-theorem-Weierstrass}. 
For example, each of the functions
\begin{equation} \label{eq:special-functions-c}
	c(t) = \frac{\, 2 \, t - t^2}{2 \, (n - 1)^{1 / q}}
\qquad\text{and}\qquad
	c(t) = \frac{2 \, t}{(n - 1)^{1 / q} \, (t  + 2)} \, .
\end{equation}
satisfies \eqref{eq:condition-c}. This statement follows from the obvious inequalities
\[
(1 + t)^{1 / (n - 1)} - 1 \ge \frac{\ln(1 + t)}{n - 1}
\quad\text{and}\quad
\ln(1 + t) \ge \frac{2 t}{t + 2} \ge t - \frac{\, t^2}{2} .
\]

\medskip
Applying Theorem~\ref{thm:third-local-convergence-theorem-Weierstrass} with the first function $c \colon [0, 1) \to \Rset_+$ defined by \eqref{eq:special-functions-c} we obtain at the following result.

\begin{corollary} \label{cor:third-local-convergence-theorem-Weierstrass-corollary-1}
Let ${f \in \Kset[z]}$ be a polynomial of degree ${n > 2^{q + 1} + 1}$ which splits over $\Kset$, 
$\xi$ be a root-vector of $f$, ${1 < p \le \infty}$, and let ${\sigma \in (0,1/2]}$ be such that
\begin{equation}\label{eq:condition-sigma-corollary}
	\frac{(\sigma + 1) (2 - \sigma) (2 - \sigma^2)}{1 - \sigma} \le 2 \left( \frac{n - 1}{2} \right)^{1 / q}.
\end{equation}
Suppose $x^0 \in \Kset^n$ is a vector with distinct components satisfying
\begin{equation} \label{eq:third-local-convergence-theorem-Weierstrass-corollary-1-initial-condition}
\left\| \frac{x^0 - \xi}{d(x^0)} \right\|_p \le
\frac{\, 2 \, \sigma - \sigma^2}{2 \, (n - 1)^{1 / q}} \, .
\end{equation}
Then $f$ has only simple zeros in $\Kset$ and the Weierstrass iteration \eqref{eq:Weierstrass-iteration} is well-defined and converges quadratically to the vector $\xi$ with error estimates
\begin{equation}  \label{eq:third-local-convergence-theorem-Weierstrass-corollary-1-error-estimates}
\|x^{k+1}  - \xi\|  \preceq  \sigma^{2^k} \|x^k  - \xi\|
\quad\text{and}\quad
\|x^k  - \xi\|  \preceq  \sigma^{2^k - 1} \|x^0  - \xi\|,
\end{equation}
\begin{equation}  \label{eq:third-local-convergence-theorem-Weierstrass-corollary-1-error-estimates-Tilli}
\|x^k  - \xi\|_\infty  \preceq  \sigma^{2^k} \max_{1 \le i \le n} {d_i(x^0)}
\end{equation}
for all $k \ge 0$.
Besides, if $n \ge 2^{2 q + 1} + 1$, then condition \eqref{eq:condition-sigma-corollary} can be dropped.
\end{corollary}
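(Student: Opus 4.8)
The plan is to apply Theorem~\ref{thm:third-local-convergence-theorem-Weierstrass} with the first explicit function from \eqref{eq:special-functions-c}, namely
\[
c(t) = \frac{2t - t^2}{2\,(n-1)^{1/q}}, \qquad t \in [0,1).
\]
This $c$ is nondecreasing on $[0,1)$ (its derivative is $(1-t)/(n-1)^{1/q}>0$), and the discussion following Theorem~\ref{thm:third-local-convergence-theorem-Weierstrass} already establishes that it satisfies \eqref{eq:condition-c}. Since $c(\sigma) = (2\sigma - \sigma^2)/(2(n-1)^{1/q})$ coincides with the right-hand side of \eqref{eq:third-local-convergence-theorem-Weierstrass-corollary-1-initial-condition}, hypothesis \eqref{eq:third-local-convergence-theorem-Weierstrass-initial-condition} holds. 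Thus, once I verify the structural condition \eqref{eq:condition-sigma}, the assertions on simple zeros, well-definedness, quadratic convergence and the two estimates \eqref{eq:third-local-convergence-theorem-Weierstrass-corollary-1-error-estimates} follow immediately from the theorem.

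The heart of the argument, and the step I expect to be the main obstacle, is the reduction of \eqref{eq:condition-sigma} to the explicit hypothesis \eqref{eq:condition-sigma-corollary}. Writing $a = (n-1)^{1/q}$ and $b = 2^{1/q}$, I would substitute the chosen $c$ into \eqref{eq:condition-sigma} and, for $t \in (0,\sigma]$, cancel the common positive factor $t^2/(2a)$ (the case $t=0$ is trivial since $c(0)=0$); collecting terms turns \eqref{eq:condition-sigma} into the equivalent inequality
\[
\frac{(1+t)(2-t)(2-t^2)}{1-t} \le \frac{2a}{b} = 2\left(\frac{n-1}{2}\right)^{1/q}.
\]
Denoting the left-hand side by $g(t)$, I would then show that $g$ is increasing on $[0,1)$ (e.g.\ by checking $(\log g)'>0$), so that the requirement ``for all $t\in[0,\sigma]$'' reduces to its value at the endpoint, $g(\sigma)\le 2((n-1)/2)^{1/q}$, which is precisely \eqref{eq:condition-sigma-corollary}. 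Hence \eqref{eq:condition-sigma} holds and Theorem~\ref{thm:third-local-convergence-theorem-Weierstrass} applies.

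It remains to derive the sup-norm estimate \eqref{eq:third-local-convergence-theorem-Weierstrass-corollary-1-error-estimates-Tilli}. Taking the $\infty$-norm in the componentwise second estimate of \eqref{eq:third-local-convergence-theorem-Weierstrass-corollary-1-error-estimates} gives $\|x^k-\xi\|_\infty \le \sigma^{2^k-1}\|x^0-\xi\|_\infty$. Since each ratio $|x_i^0-\xi_i|/d_i(x^0)$ is a single term of the $p$-norm defining $E(x^0)$, we have $|x_i^0-\xi_i| \le E(x^0)\,d_i(x^0) \le c(\sigma)\max_j d_j(x^0)$, whence $\|x^0-\xi\|_\infty \le c(\sigma)\max_j d_j(x^0)$. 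Finally, $c(\sigma) \le \sigma$ because $(n-1)^{1/q} > 2$ under the hypothesis $n > 2^{q+1}+1$; feeding this extra factor $\sigma$ into the previous bound raises the exponent to $2^k$ and yields \eqref{eq:third-local-convergence-theorem-Weierstrass-corollary-1-error-estimates-Tilli}.

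For the closing assertion that \eqref{eq:condition-sigma-corollary} can be dropped when $n \ge 2^{2q+1}+1$, I would take $\sigma = 1/2$, the largest admissible value. A direct evaluation gives $g(1/2) = 63/8 < 8$, while $n \ge 2^{2q+1}+1$ forces $(n-1)/2 \ge 4^q$ and hence $2((n-1)/2)^{1/q} \ge 8$; thus \eqref{eq:condition-sigma-corollary} holds at $\sigma = 1/2$, and by the monotonicity of $g$ it holds for every $\sigma \in (0,1/2]$, so it need not be imposed as a separate hypothesis.
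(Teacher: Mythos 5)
Your proposal is correct and takes essentially the same route as the paper: it applies Theorem~\ref{thm:third-local-convergence-theorem-Weierstrass} with the first function in \eqref{eq:special-functions-c}, reduces \eqref{eq:condition-sigma} to \eqref{eq:condition-sigma-corollary} (an equivalence the paper merely asserts, which your cancellation of the factor $t^2/(2(n-1)^{1/q})$ and the monotonicity of $g(t)=(1+t)(2-t)(2-t^2)/(1-t)$ make explicit), derives \eqref{eq:third-local-convergence-theorem-Weierstrass-corollary-1-error-estimates-Tilli} from the second estimate in \eqref{eq:third-local-convergence-theorem-Weierstrass-corollary-1-error-estimates} together with the initial condition via $c(\sigma)\le\sigma$, and verifies $g(1/2)=63/8<8\le 2((n-1)/2)^{1/q}$ when $n\ge 2^{2q+1}+1$. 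Note only that the paper's proof text says ``the second function'' of \eqref{eq:special-functions-c} by an evident slip---the initial condition \eqref{eq:third-local-convergence-theorem-Weierstrass-corollary-1-initial-condition} matches the first function, which is the one you correctly used.
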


\begin{proof}
The statement follows from Theorem~\ref{thm:third-local-convergence-theorem-Weierstrass} with the second function 
${c \colon [0, 1) \to \Rset_+}$ defined by \eqref{eq:special-functions-c}.
The  estimate \eqref{eq:third-local-convergence-theorem-Weierstrass-corollary-1-error-estimates-Tilli}
follows from the second estimate in \eqref{eq:third-local-convergence-theorem-Weierstrass-corollary-1-error-estimates} and the initial condition \eqref{eq:third-local-convergence-theorem-Weierstrass-corollary-1-initial-condition}.
It should be noted only that condition \eqref{eq:condition-sigma-corollary} 
is equivalent to \eqref{eq:condition-sigma} and that 
\eqref{eq:condition-sigma-corollary} is satisfied automatically if ${n \ge 2^{2 q + 1} + 1}$.
\end{proof}

Corollary \eqref{cor:third-local-convergence-theorem-Weierstrass-corollary-1} is a generalization and improvement of the result of 
Tilli \cite{Til98} who has proved the error estimate
\eqref{eq:third-local-convergence-theorem-Weierstrass-corollary-1-error-estimates-Tilli}  
under the condition \eqref{eq:third-local-convergence-theorem-Weierstrass-corollary-1-initial-condition} with ${p = \infty}$ 
for a polynomial ${f \in \Cset[z]}$ of degree ${n \ge 9}$.

\medskip
Applying Theorem~\ref{thm:third-local-convergence-theorem-Weierstrass} with the second function $c \colon [0, 1) \to \Rset_+$ defined by \eqref{eq:special-functions-c} we arrive at the following result.

\begin{corollary} \label{cor:third-local-convergence-theorem-Weierstrass-corollary-2}
Let ${f \in \Kset[z]}$ be a polynomial of degree ${n > 2^{q + 1} + 1}$ which splits over $\Kset$, $\xi$ be a root-vector of $f$ and 
${1 < p \le \infty}$.
Suppose $\sigma$ is a real number satisfying
\[
0 < \sigma \le \frac{(n - 1)^{1 / q} - 2^{1 + 1 / q}}{(n - 1)^{1 / q} + 2^{1 + 1 / q}}
\]
and $x^0 \in \Kset^n$ is  a vector with distinct components such that
\begin{equation} \label{eq:third-local-convergence-theorem-Weierstrass-corollary-2-initial-condition}
\left\| \frac{x^0 - \xi}{d(x^0)} \right\|_p \le
\frac{2 \, \sigma}{(n - 1)^{1 / q} \, (\sigma  + 2)} \, .
\end{equation}
Then $f$ has only simple zeros in $\Kset$ and the Weierstrass iteration \eqref{eq:Weierstrass-iteration} is well-defined and converges quadratically to $\xi$ with error estimates \eqref{eq:third-local-convergence-theorem-Weierstrass-corollary-1-error-estimates}.
\end{corollary}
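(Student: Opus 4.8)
The plan is to verify that the second function $c\colon[0,1)\to\Rset_+$ in \eqref{eq:special-functions-c}, together with the prescribed $\sigma$, satisfies all the hypotheses of Theorem~\ref{thm:third-local-convergence-theorem-Weierstrass}, and then simply invoke that theorem. Throughout I would abbreviate $a = (n-1)^{1/q}$ and $b = 2^{1/q}$, so that $c(t) = 2t/(a(t+2))$ and $2b = 2^{1+1/q}$. The standing hypothesis $n > 2^{q+1}+1$ is equivalent to $a^q = n-1 > 2^{q+1} = (2b)^q$, hence to $a > 2b$; this guarantees that the upper bound $(a-2b)/(a+2b)$ for $\sigma$ is positive, so that an admissible $\sigma$ exists. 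Writing $c(t) = (2/a)\bigl(1 - 2/(t+2)\bigr)$ shows at once that $c$ is nondecreasing and nonnegative on $[0,1)$.

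Condition \eqref{eq:condition-c} for this $c$ is already recorded in the discussion following Theorem~\ref{thm:third-local-convergence-theorem-Weierstrass}: combining $(1+t)^{1/(n-1)} - 1 \ge \ln(1+t)/(n-1)$ with $\ln(1+t) \ge 2t/(t+2)$ and using $1/p - 1 = -1/q$ yields exactly $c(t) \le (n-1)^{1/p}\bigl((1+t)^{1/(n-1)}-1\bigr)$. So the only substantial point to check is the scaling condition \eqref{eq:condition-sigma}.

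The heart of the argument is the reduction of \eqref{eq:condition-sigma} to a linear inequality. Substituting $c(t) = 2t/(a(t+2))$ and $c(t^2) = 2t^2/(a(t^2+2))$ into \eqref{eq:condition-sigma}, dividing through by the common positive factor $2t^2/a$ (legitimate only for $t>0$, the case $t=0$ being trivial), and clearing denominators, I expect \eqref{eq:condition-sigma} to collapse to
\[
a(t^2+2) + 2bt(1+t) \le a(t+2),
\]
which after cancelling the constant $2a$ and dividing by $t$ simplifies to $(a+2b)t \le a - 2b$, that is, $t \le (a-2b)/(a+2b)$. Consequently \eqref{eq:condition-sigma} holds for all $t\in[0,\sigma]$ precisely when $\sigma \le (a-2b)/(a+2b) = \bigl((n-1)^{1/q}-2^{1+1/q}\bigr)/\bigl((n-1)^{1/q}+2^{1+1/q}\bigr)$, which is exactly the stated hypothesis on $\sigma$. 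This algebraic simplification is the one delicate step, since one must track the cancellations carefully and treat $t=0$ separately.

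With \eqref{eq:condition-c} and \eqref{eq:condition-sigma} established and the initial condition \eqref{eq:third-local-convergence-theorem-Weierstrass-corollary-2-initial-condition} being precisely $\|(x^0-\xi)/d(x^0)\|_p \le c(\sigma)$, all hypotheses of Theorem~\ref{thm:third-local-convergence-theorem-Weierstrass} are met. The asserted conclusions — simplicity of the zeros of $f$, well-definedness and quadratic convergence of the Weierstrass iteration to $\xi$, and the error estimates \eqref{eq:third-local-convergence-theorem-Weierstrass-corollary-1-error-estimates} — then follow directly from that theorem.
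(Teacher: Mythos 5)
Your proposal is correct and is precisely the argument the paper intends: the paper states this corollary by ``applying Theorem~\ref{thm:third-local-convergence-theorem-Weierstrass} with the second function $c$ defined by \eqref{eq:special-functions-c}'', and you verify exactly the needed hypotheses — condition \eqref{eq:condition-c} via the logarithmic inequalities recorded after the theorem, and condition \eqref{eq:condition-sigma}, which with $a=(n-1)^{1/q}$, $b=2^{1/q}$ does reduce for $t>0$ to $(a+2b)t\le a-2b$, matching the stated bound on $\sigma$ (with $n>2^{q+1}+1$ giving $a>2b$, so admissible $\sigma$ exist). Your algebra checks out, including the treatment of $t=0$ and the identification of the initial condition \eqref{eq:third-local-convergence-theorem-Weierstrass-corollary-2-initial-condition} with $E(x^0)\le c(\sigma)$, so the conclusions follow directly from the theorem.
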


Corollary~\ref{cor:third-local-convergence-theorem-Weierstrass-corollary-1} can be state in the following equivalent form.

\begin{corollary} \label{cor:third-local-convergence-theorem-Weierstrass-corollary-3}
Let ${f \in \Kset[z]}$ be a polynomial of degree ${n > 2^{q + 1} + 1}$ which splits over $\Kset$, $\xi$ be a root-vector of $f$ and 
${1 < p \le \infty}$.
Suppose $x^0 \in \Kset^n$ is a vector with distinct components satisfying
\begin{equation} \label{eq:third-local-convergence-theorem-Weierstrass-Tilli-1998-equivalent-initial-conditions}
\left\| \frac{x^0 - \xi}{d(x^0)} \right\|_p \le
\frac{2 \, [(n - 1)^{1 / q} - 2^{1 + 1 / q}]}{(n - 1)^{1 / q} \, [ 3 (n - 1)^{1 / q} + 2^{1 + 1 / q}]} \, .
\end{equation}
Then $f$ has only simple zeros in $\Kset$ and the Weierstrass iteration \eqref{eq:Weierstrass-iteration} is well-defined and converges quadratically to $\xi$ with error estimates
\begin{equation}
\|x^{k+1}  - \xi\|  \preceq  \lambda^{2^k} \|x^k  - \xi\|
\quad\text{and}\quad
\|x^k  - \xi\|  \preceq  \lambda^{2^k - 1} \|x^0  - \xi\|
\end{equation}
for all $k \ge 0$, where
\[
\lambda = \frac{2 (n - 1)^{1 / q} \, E(x^0)}{2 - (n - 1)^{1 / q} \, E(x^0)} \, .
\]
\end{corollary}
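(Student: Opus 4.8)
The plan is to obtain this statement as a pure reparametrization of Corollary~\ref{cor:third-local-convergence-theorem-Weierstrass-corollary-2}, in which the auxiliary parameter $\sigma$ is eliminated in favor of the initial quantity $E(x^0)$. Throughout I would write $b = (n-1)^{1/q}$ and $a = 2^{1+1/q}$; the hypothesis $n > 2^{q+1}+1$ is exactly $b > a$, so $a$ and $b$ are positive with $b > a > 0$. Recall that Corollary~\ref{cor:third-local-convergence-theorem-Weierstrass-corollary-2} arises from Theorem~\ref{thm:third-local-convergence-theorem-Weierstrass} applied to the second function in \eqref{eq:special-functions-c}, namely $c(t) = 2t/(b(t+2))$, and that it applies whenever $0 < \sigma \le (b-a)/(b+a)$ and $E(x^0) \le c(\sigma)$, delivering convergence at rate $\sigma$. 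The present corollary is simply the form of that result in which, for a given admissible initial guess, one chooses the sharpest (smallest) such $\sigma$, which turns out to be the stated $\lambda$.

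First I would set $\sigma = \lambda = 2bE(x^0)/(2 - bE(x^0))$ and verify that this $\sigma$ is admissible for Corollary~\ref{cor:third-local-convergence-theorem-Weierstrass-corollary-2}. Since $c$ is strictly increasing, it suffices to show that the map $E \mapsto \sigma$ is the inverse of $\sigma \mapsto c(\sigma)$, i.e. that $E(x^0) = c(\sigma) = 2\sigma/(b(\sigma+2))$; this is the elementary algebraic solution of $c(\sigma) = E(x^0)$ for $\sigma$, and it simultaneously guarantees $2 - bE(x^0) > 0$, so that $\sigma$ is well-defined and positive. Next I would check that the initial condition \eqref{eq:third-local-convergence-theorem-Weierstrass-Tilli-1998-equivalent-initial-conditions} is precisely the inequality $\sigma \le (b-a)/(b+a)$. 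Because $\sigma$ is increasing in $E(x^0)$, this reduces to computing the largest admissible value of $E(x^0)$, obtained by substituting $\sigma = (b-a)/(b+a)$ into $E = c(\sigma)$; using $2 + \sigma = (3b+a)/(b+a)$ this simplifies to $E(x^0) = 2(b-a)/(b(3b+a))$, which in the original notation is exactly the right-hand side of \eqref{eq:third-local-convergence-theorem-Weierstrass-Tilli-1998-equivalent-initial-conditions}.

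With these two identities in hand, the hypothesis of the present corollary is equivalent to the pair of hypotheses of Corollary~\ref{cor:third-local-convergence-theorem-Weierstrass-corollary-2} for this $\sigma$, so all of its conclusions transfer verbatim: $f$ has only simple zeros, the Weierstrass iteration is well-defined and converges quadratically, and the error estimates \eqref{eq:third-local-convergence-theorem-Weierstrass-corollary-1-error-estimates} hold with $\sigma$ as the convergence rate. Since $\sigma = \lambda$ by construction, these are exactly the claimed estimates. I expect no genuine obstacle: the whole argument is the inversion of the increasing rational function $c$, and the only points needing a line of care are confirming $2 - bE(x^0) > 0$ (which holds because the admissible range forces $E(x^0) \le 2(b-a)/(b(3b+a)) < 2/b$) and that $\sigma \le (b-a)/(b+a) < 1$ indeed lies in $(0,1)$ as required by Corollary~\ref{cor:third-local-convergence-theorem-Weierstrass-corollary-2}.
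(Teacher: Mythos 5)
Your proposal is correct and matches the paper's intended derivation: the paper states this corollary without proof as an ``equivalent form,'' and that equivalence is exactly your inversion of $c(t) = 2t/\bigl((n-1)^{1/q}(t+2)\bigr)$, i.e.\ taking $\sigma = \lambda$ in Corollary~\ref{cor:third-local-convergence-theorem-Weierstrass-corollary-2} (the paper's cross-reference to Corollary~\ref{cor:third-local-convergence-theorem-Weierstrass-corollary-1} at that point appears to be a typo, since the constants clearly come from the second function in \eqref{eq:special-functions-c}). Your algebra checks out: with $b = (n-1)^{1/q}$ and $a = 2^{1+1/q}$, solving $E(x^0) = c(\sigma)$ gives $\sigma = 2bE(x^0)/(2 - bE(x^0)) = \lambda$, and $c\bigl((b-a)/(b+a)\bigr) = 2(b-a)/\bigl(b(3b+a)\bigr)$ is precisely the right-hand side of \eqref{eq:third-local-convergence-theorem-Weierstrass-Tilli-1998-equivalent-initial-conditions}, so the hypotheses transfer verbatim.
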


%
%

\section{Semilocal convergence of the Weierstrass method}
\label{sec:Semilocal-convergence-of-the-Weierstrass-method}

In this section, we prove a new convergence theorem for the Weierstrass method under computationally verifiable initial conditions.
The main result of this section generalizes, improves and complements all previous results in this area, which are due to 
Pre{\v s}i\'c \cite{Pre80},
Zheng \cite{Zhe82,Zhe87},
Wang and Zhao \cite{ZW93,WZ95},
Petkovi\'c, Carstensen and Trajkovi\'c \cite{PCT95},
Petkovi\'c \cite{Pet96},
Petkovi\'c, Herceg and Ili\'c \cite{PHI98},
Batra \cite{Bat98},
Han \cite{Han00},
Petkovi\'c and Herceg \cite{PH01} and
Proinov \cite{Pro06b}.
The new result in this section also gives computationally verifiable error estimates,
a localization formula for the Weierstrass iteration \eqref{eq:Weierstrass-iteration} as well as 
a sufficient condition for a polynomial ${f \in \Kset[z]}$ of degree ${n \ge 2}$ to have $n$ simple zeros in the field $\Kset$. 
Finally, we provide an example which shows the exactness of the error estimates of our semilocal theorem for the Weierstrass iterative method.

We study the convergence of the Weierstrass method \eqref{eq:Weierstrass-iteration} for a polynomial $f \in \Kset[z]$ 
with respect to the function of initial conditions
${E \colon \mathcal{D} \to \Rset_+}$ defined by
\begin{equation} \label{eq:FIC3-SM}
E(x) = \left \| \frac{W(x)}{d(x)} \right \|_p \qquad (1 \le p \le \infty).
\end{equation}
We prove that the Weierstrass iterative function ${T \colon \mathcal{D} \subset \Kset^n \to \Kset^n}$ defined by 
\eqref{Weierstrass-iteration-function} is an iterated contraction with respect to $E$.

\medskip
We begin this section with a well-known result whose proof we include for completeness.

\begin{proposition} \label{prop:Lagrange-formula-Weierstrass}
Let ${f \in \Kset[z]}$ be a monic polynomial of degree ${n \ge 2}$, and let ${x \in \Kset^n}$ be a vector with distinct components. 
Then for all ${z \in \Kset}$,
\[
f(z) = \sum_{i=1}^{n} W_i(x) \prod_{j \ne i}{(z - x_j)} + \prod_{j=1}^{n} (z - x_j).
\]
\end{proposition}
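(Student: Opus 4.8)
The plan is to recognize the claimed formula as a Lagrange interpolation identity for the polynomial $f$ at the nodes $x_1,\ldots,x_n$. To this end I would introduce the auxiliary polynomial
\[
g(z) = f(z) - \prod_{j=1}^{n} (z - x_j).
\]
First I would observe that since $f$ is monic of degree $n$ and $\prod_{j=1}^{n}(z-x_j)$ is likewise monic of degree $n$, their leading terms $z^n$ cancel, so $g \in \Kset[z]$ has degree at most $n-1$.

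Next I would evaluate $g$ at each node. Because the product $\prod_{j=1}^{n}(z-x_j)$ contains the factor $(z-x_i)$, it vanishes at $z = x_i$, whence $g(x_i) = f(x_i)$ for every $i \in I_n$. Since $x$ has distinct components, $x_1,\ldots,x_n$ are $n$ distinct elements of $\Kset$, so $g$ is a polynomial of degree at most $n-1$ whose values are prescribed at these $n$ nodes. The uniqueness of the Lagrange interpolant over the field $\Kset$ then forces
\[
g(z) = \sum_{i=1}^{n} g(x_i) \prod_{j \ne i} \frac{z - x_j}{x_i - x_j}
= \sum_{i=1}^{n} \frac{f(x_i)}{\prod_{j \ne i}(x_i - x_j)} \prod_{j \ne i}(z - x_j).
\]

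Finally I would identify the coefficient $f(x_i)/\prod_{j \ne i}(x_i - x_j)$ with the Weierstrass correction $W_i(x)$ of \eqref{eq:Weierstrass-correction} (here $a_0 = 1$ because $f$ is monic) and add $\prod_{j=1}^{n}(z-x_j)$ back to both sides to recover $f(z)$, which is exactly the asserted identity. There is no substantive obstacle here: the only points requiring care are the degree bookkeeping that yields $\deg g \le n-1$ and the appeal to uniqueness of interpolation. The latter amounts to the fact that a nonzero polynomial of degree at most $n-1$ cannot have $n$ distinct roots, which holds over an arbitrary field, and in particular over the normed field $\Kset$, with no assumption on its characteristic or on algebraic closure. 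An equivalent route that avoids naming interpolation is to define $P(z)$ as the right-hand side of the claimed formula and note that $P - f$ has degree at most $n-1$, yet $P(x_i) = W_i(x)\prod_{j \ne i}(x_i - x_j) = f(x_i)$ for every $i \in I_n$; thus $P - f$ has $n$ distinct roots and therefore vanishes identically, giving $P \equiv f$.
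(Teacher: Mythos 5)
Your proof is correct and follows exactly the paper's own route: you form the same auxiliary polynomial $g(z) = f(z) - \prod_{j=1}^{n}(z - x_j)$ and apply Lagrange interpolation at the nodes $x_1,\ldots,x_n$, merely spelling out the degree count, the evaluations $g(x_i)=f(x_i)$, and the uniqueness argument that the paper leaves implicit. No gaps; the added detail (and the alternative root-counting formulation) is a faithful expansion of the one-line proof in the paper.
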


\begin{proof}
Applying Lagrange's interpolation formula to the polynomial
\[
g(z) = f(z) - \prod_{j = 1}^n (z - x_j)
\]
at the nodes ${x_1,\ldots,x_n}$, we get the desired presentation of $f$.
\end{proof}

Using the Weierstrass correction $W_f$ one can state the following basic existence result for polynomial zeros. 

\begin{proposition}[Basic Existence Theorem] \label{prop:Basic-existence-theorem}
Let $f \in \Kset[z]$ be a polynomial of degree $n \ge 2$, and let
$(x^k)$ be an infinite sequence of vectors in $\Kset^n$ with distinct components satisfying the following two conditions:
\begin{enumerate}[(i)]
\item The sequence $(x^k)$ converges to a point $\xi$ in ${\Kset}^n$;
\item The sequence $(W_f(x^k))$ converges to the zero-vector in ${\Kset}^n$.
\end{enumerate}
Then $\xi$ is a root-vector of $f$.
\end{proposition}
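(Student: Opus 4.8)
The plan is to reduce to the monic case and then pass to the limit in the Lagrange-type identity of Proposition~\ref{prop:Lagrange-formula-Weierstrass}. Since the Weierstrass correction $W_f$ is invariant under multiplication of $f$ by a nonzero constant, I may divide $f$ by its leading coefficient $a_0$ and assume without loss of generality that $f$ is monic; once $\xi$ is shown to be a root-vector of the monic polynomial, multiplying the resulting factorization back by $a_0$ recovers \eqref{eq:root-vector} for the original $f$. Note that the hypotheses (i) and (ii) are unaffected by this normalization.

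First I would apply Proposition~\ref{prop:Lagrange-formula-Weierstrass} to each term $x = x^k$ of the sequence, which is legitimate because every $x^k$ has distinct components by assumption. This gives, for every $z \in \Kset$ and every $k$,
\[
f(z) = \sum_{i=1}^n W_i(x^k) \prod_{j \ne i}(z - x^k_j) + \prod_{j=1}^n (z - x^k_j).
\]
The left-hand side is independent of $k$, so the idea is simply to fix $z$ and let $k \to \infty$ on the right. To justify this passage, I would use that $(\Kset,|\cdot|)$ is a valued field, so its absolute value is multiplicative and subadditive and hence addition and multiplication are continuous. Consequently, for fixed $z$ and each fixed $i$, condition (i) gives $x^k_j \to \xi_j$ and therefore $\prod_{j \ne i}(z - x^k_j) \to \prod_{j \ne i}(z - \xi_j)$; in particular these products stay bounded in $k$. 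Combined with $W_i(x^k) \to 0$ from condition (ii), each of the finitely many summands $W_i(x^k)\prod_{j \ne i}(z - x^k_j)$ tends to $0$, so the entire sum tends to $0$, while the final product converges to $\prod_{j=1}^n(z - \xi_j)$ by the same continuity argument.

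Passing to the limit therefore yields $f(z) = \prod_{j=1}^n (z - \xi_j)$, and since $z \in \Kset$ was arbitrary this identity holds for all $z \in \Kset$, which is precisely \eqref{eq:root-vector} in the monic normalization $a_0 = 1$; undoing the normalization shows that $\xi$ is a root-vector of $f$. I expect the only delicate point to be the interchange of the limit with the finite sum and products, i.e.\ confirming that continuity of the field operations licenses the termwise limit for each fixed $z$ and that only finitely many terms are involved so no uniformity in $z$ is needed; the remainder is the routine bookkeeping of the monic reduction.
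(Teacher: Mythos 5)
Your proposal is correct and follows essentially the same route as the paper: reduce to the monic case via the invariance of $W_f$ under scaling, apply Proposition~\ref{prop:Lagrange-formula-Weierstrass} with $x = x^k$, and let $k \to \infty$ to obtain $f(z) = \prod_{j=1}^n (z - \xi_j)$. Your extra care in justifying the termwise limit via continuity of the field operations is a detail the paper leaves implicit, but the argument is the same.
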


\begin{proof}
Without lose of generality we may assume that $f$ is a monic polynomial. 
Applying Proposition~\ref{prop:Lagrange-formula-Weierstrass} with $x = x^k$, we obtain
\[
f(z) =
\sum_{i=1}^n {W_i(x^k) \prod_{j \neq i} {(z - x_j^k)}} + \prod_{j = 1}^n {(z - x_j^k)}.
\]
for all $k$. Passing to the limit when $k \rightarrow \infty$, we get
\(
f(z) = \prod_{j = 1}^n {(z - \xi_j)}
\)
which completes the proof.
\end{proof}

The following remarkable result for the Weierstrass iteration is an immediate consequence of Proposition~\ref{prop:Basic-existence-theorem}.

\begin{proposition} \label{prop:Weierstrass-iteration-existence-theorem}
Let $f \in \Kset[z]$ be a polynomial of degree $n \ge 2$. If for some initial guess $x^0 \in \Kset^n$, the Weierstrass iteration \eqref{eq:Weierstrass-iteration} is well-defined and converges to a vector $\xi \in \Kset^n$, then $\xi$ is a root-vector of $f$.
\end{proposition}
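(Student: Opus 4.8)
The plan is to deduce this directly from the Basic Existence Theorem (Proposition~\ref{prop:Basic-existence-theorem}), so that the entire task reduces to checking that the Weierstrass iterates satisfy its two hypotheses. First I would unpack what it means for the iteration \eqref{eq:Weierstrass-iteration} to be well-defined. Since the Weierstrass correction $W = W_f$ is only defined on the set $\mathcal{D}$ of vectors with distinct components, the iteration being well-defined forces $x^k \in \mathcal{D}$ for every $k$, so that each correction $W(x^k)$ exists. This immediately supplies the standing requirement of Proposition~\ref{prop:Basic-existence-theorem} that $(x^k)$ be an infinite sequence of vectors in $\Kset^n$ with distinct components. Condition (i) of that proposition is then exactly the hypothesis that the iteration converges to $\xi \in \Kset^n$.

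The substantive step is condition (ii), namely that $(W_f(x^k))$ converges to the zero-vector. Here I would exploit the algebraic form of the iteration: rewriting \eqref{eq:Weierstrass-iteration} as $W(x^k) = x^k - x^{k+1}$. Since $(x^k)$ converges to $\xi$, both the sequence $x^k$ and its shift $x^{k+1}$ converge to $\xi$, and hence, by continuity of subtraction in $\Kset^n$,
\[
W(x^k) = x^k - x^{k+1} \longrightarrow \xi - \xi = 0.
\]
This establishes (ii). With both hypotheses verified, Proposition~\ref{prop:Basic-existence-theorem} applies and yields that $\xi$ is a root-vector of $f$.

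I do not expect any real obstacle in this argument: the only point requiring care is the correct reading of ``well-defined'' as guaranteeing that every iterate lies in $\mathcal{D}$ (and hence that the whole sequence has distinct components, as Proposition~\ref{prop:Basic-existence-theorem} demands); everything else is a one-line consequence of the recurrence $W(x^k) = x^k - x^{k+1}$ together with the assumed convergence. Thus the result is genuinely an immediate corollary of the Basic Existence Theorem, as claimed in the text.
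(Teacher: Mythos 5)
Your proof is correct and follows exactly the route the paper intends: the paper gives no explicit proof, stating only that the result is an immediate consequence of Proposition~\ref{prop:Basic-existence-theorem}, and your argument (well-definedness forces $x^k \in \mathcal{D}$, convergence gives hypothesis (i), and $W(x^k) = x^k - x^{k+1} \to \xi - \xi = 0$ gives hypothesis (ii)) is precisely the one-line verification being left to the reader.
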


Let us give some historical historical notes about Proposition~\ref{prop:Weierstrass-iteration-existence-theorem}. 
In 1972, Petkov \cite[p.~272]{Pet74b} has briefly noted (without proof) that if the Weierstrass iteration 
\eqref{eq:Weierstrass-iteration} for a complex polynomial $f$ converges to a vector $\xi \in \Cset^n$, then 
$\xi$ is a root-vector of $f$. This result for complex polynomials with simple zeros has been proved by Zheng \cite{Zhe82} in 1982 and re-obtained in 1994 by Hopkins, Marshall, Schmidt and Zlobec \cite[Proposition 4.1]{HMSZ94}.

\begin{proposition} \label{prop:localization}
Let ${\Phi \colon D \subset \Kset^n \to \Kset^n}$ be a mapping defined on a set $D$ which contains only vectors with distinct components, 
and let ${E \colon D \to \Rset_+}$ be defined by
\[
E(x) = \left\| \frac{\Phi(x)}{d(x)} \right\|_p \qquad (1 \le p \le \infty).
\]
Let ${x \in D}$ be such that 
\( 
{E(x) \in J},
\)
where $J \subset \Rset$ is an interval containing zero. 
Furthermore, let there exist two function $\beta,\gamma \colon J \to \Rset_+$ such that
\begin{equation} \label{eq:gamma-beta}
\beta(t) < 1 - 2^{1/q} \, t \, \gamma(t)
\quad\text{for all}\quad t \in J.
\end{equation}
Then the closed disks 
\begin{equation}  \label{disks-disjoint}
D_i = \{z \in \Kset : |z - x_i| \le r_i \}, \quad i = 1,2,\ldots,n,
\end{equation}
where 
\[
r_i = \frac{\gamma(E(x))}{1 - \beta(E(x))} \, |\Phi_i(x)|, 
\]
are mutually disjoint.
\end{proposition}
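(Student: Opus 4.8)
The plan is to show that any two of the disks $D_i$ and $D_j$ with $i \ne j$ are disjoint by verifying the separation inequality $r_i + r_j < |x_i - x_j|$. Once this is established, disjointness is immediate from the triangle inequality: a common point $z \in D_i \cap D_j$ would give $|x_i - x_j| \le |x_i - z| + |z - x_j| \le r_i + r_j < |x_i - x_j|$, a contradiction. Since $x$ has distinct components, $|x_i - x_j| > 0$ for $i \ne j$, so the inequality is meaningful. Writing $t = E(x)$, condition \eqref{eq:gamma-beta} gives $\beta(t) < 1 - 2^{1/q}\, t\, \gamma(t) \le 1$, so that $1 - \beta(t) > 0$ and the radii $r_i$ are well defined and nonnegative.

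The crux is the estimate
\[
|\Phi_i(x)| + |\Phi_j(x)| \le 2^{1/q}\, t\, |x_i - x_j| \qquad (i \ne j).
\]
First I would observe, directly from the definition \eqref{eq:function-d} of $d_i$, that both $d_i(x) \le |x_i - x_j|$ and $d_j(x) \le |x_i - x_j|$. Next, from $t^p = \|\Phi(x)/d(x)\|_p^p = \sum_{k} (|\Phi_k(x)|/d_k(x))^p$ I would retain only the terms $k = i$ and $k = j$ and use the two bounds on $d_i(x), d_j(x)$ to obtain
\[
|\Phi_i(x)|^p + |\Phi_j(x)|^p \le |x_i - x_j|^p \left( \left( \frac{|\Phi_i(x)|}{d_i(x)} \right)^p + \left( \frac{|\Phi_j(x)|}{d_j(x)} \right)^p \right) \le t^p\, |x_i - x_j|^p .
\]
The case $p = \infty$ is handled in the same way, with the sum replaced by the maximum.

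To pass from this $p$-sum to the ordinary sum I would apply the power mean inequality $M_1 \le M_p$ of Proposition~\ref{prop:power-mean-inequality} to the two-component vector $(|\Phi_i(x)|,|\Phi_j(x)|)$, which yields $|\Phi_i(x)| + |\Phi_j(x)| \le 2^{1/q}\,(|\Phi_i(x)|^p + |\Phi_j(x)|^p)^{1/p}$, the constant $2^{1/q} = 2^{1-1/p}$ being exactly the one appearing in the statement. Combining this with the previous bound gives the crux estimate, and therefore
\[
r_i + r_j = \frac{\gamma(t)}{1 - \beta(t)}\,\big( |\Phi_i(x)| + |\Phi_j(x)| \big) \le \frac{2^{1/q}\, t\, \gamma(t)}{1 - \beta(t)}\, |x_i - x_j| < |x_i - x_j|,
\]
the final strict inequality being precisely condition \eqref{eq:gamma-beta} rewritten as $2^{1/q}\, t\, \gamma(t) < 1 - \beta(t)$. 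This completes the argument.

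The main obstacle is securing the sharp constant $2^{1/q}$ rather than a crude factor of $2$: this is exactly what the two-term power mean inequality (equivalently, $\|\cdot\|_1 \le 2^{1/q}\|\cdot\|_p$ on $\Rset^2$) supplies, and it must be combined carefully with the bounds $d_i(x), d_j(x) \le |x_i - x_j|$ so that the factor $|x_i - x_j|$ is extracted and matches the $|x_i - x_j|$ required on the right-hand side of the separation inequality.
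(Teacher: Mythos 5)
Your proof is correct and follows essentially the same route as the paper's: both reduce disjointness to the separation inequality $r_i + r_j < |x_i - x_j|$, obtain it from the bounds $d_i(x), d_j(x) \le |x_i - x_j|$ together with the two-term power mean inequality $M_1 \le M_p$ (yielding the constant $2^{1/q}$), and close with condition \eqref{eq:gamma-beta}. The only cosmetic differences are that you apply the power mean inequality to $(|\Phi_i(x)|,|\Phi_j(x)|)$ after extracting the factor $|x_i - x_j|$ rather than to the quotients $|\Phi_k(x)|/d_k(x)$ beforehand, and your chain handles the degenerate case directly, so you avoid the paper's explicit case split on $C = 0$.
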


\begin{proof}
For simplicity, we set  ${b = 2^{1/q}}$ and $C = \gamma(E(x)) / (1 - \beta(E(x)))$. 
It follows from \eqref{eq:gamma-beta} that
\[
 \frac{b \, t \, \gamma(t)}{1 - \beta(t)} < 1 \quad\text{for all}\quad t \in J.
\]
This implies that ${b \, C \, E(x) < 1}$ since ${E(x) \in J}$.
To prove that the disks \eqref{disks-disjoint} are mutually disjoint it is sufficient to show that
${r_i + r_j < |x_i - x_j|}$, that is, 
\begin{equation} \label{eq:distance-center-radius}
C \, (|\Phi_i(x)| + |\Phi_j(x)|) < |x_i - x_j|
\quad\text{for}\quad i \ne j.
\end{equation}
Suppose ${C > 0}$ since the case ${C = 0}$ is obvious. 
From the definition of $d(x)$, the power mean inequality $M_1 \le M_p$ and the inequality ${b \, C \, E(x) < 1}$, we obtain
\[
\frac{|\Phi_i(x)| + |\Phi_j(x)|}{|x_i - x_j|} \le \frac{|\Phi_i(x)|}{d_i(x)} + \frac{|\Phi_j(x)|}{d_j(x)} \le b \, E(x) < \frac{1}{C}
\] 
which yields \eqref{eq:distance-center-radius}.
\end{proof}

\begin{lemma} 
Let ${f \in \Kset[z]}$ be a polynomial of degree ${n \ge 2}$ which has $n$ simple zeros in $\Kset$, ${\xi \in \Kset^n}$ be a root-vector of $f$, ${x \in \Kset^n}$ and ${1 \le p \le \infty}$. Then
\[
|x_i - x_j| \ge (1 - 2^{1 / q} E(x)) \, d_j(\xi) \quad\text{and}\quad    |x_i - \xi_j| \ge (1 - E(x)) \, d_i(\xi)  
\]
for ${i \ne j}$, where ${E \colon \Kset^n \to \Rset_+}$ is defined by \eqref{eq:FIC1-SM}.
\end{lemma}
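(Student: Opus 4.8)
The plan is to obtain both inequalities as immediate consequences of Proposition~\ref{prop:inequality-1} applied with $u = x$ and $v = \xi$; indeed this lemma is essentially a restatement of Lemma~\ref{lem:FIC1-SM-properties}, with ``splits'' strengthened to ``$n$ simple zeros.'' First I would observe that since $f$ has $n$ simple zeros in $\Kset$, its root-vector $\xi$ has pairwise distinct components, so the hypothesis of Proposition~\ref{prop:inequality-1} is satisfied. Substituting $u = x$, $v = \xi$ and recalling that $E(x) = \left\| \frac{x - \xi}{d(\xi)} \right\|_p$ by \eqref{eq:FIC1-SM}, parts (i) and (ii) of that proposition give, for every $i \ne j$,
\[
|x_i - x_j| \ge \left( 1 - 2^{1/q} E(x) \right) |\xi_i - \xi_j|
\quad\text{and}\quad
|x_i - \xi_j| \ge \left( 1 - E(x) \right) |\xi_i - \xi_j|.
\]

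Next I would pass from $|\xi_i - \xi_j|$ to the minimum-separation quantities appearing in the statement. By the definition \eqref{eq:function-d} of $d$ we have $d_j(\xi) = \min_{k \ne j} |\xi_j - \xi_k| \le |\xi_i - \xi_j|$ and likewise $d_i(\xi) \le |\xi_i - \xi_j|$. The only point requiring a word of care is the sign of the scalar factors. If $1 - 2^{1/q} E(x) \ge 0$, monotonicity gives $\left(1 - 2^{1/q} E(x)\right) d_j(\xi) \le \left(1 - 2^{1/q} E(x)\right) |\xi_i - \xi_j| \le |x_i - x_j|$; if instead $1 - 2^{1/q} E(x) < 0$, then the right-hand side $\left(1 - 2^{1/q} E(x)\right) d_j(\xi)$ is nonpositive and the inequality holds trivially because $|x_i - x_j| \ge 0$. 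The same dichotomy, applied with the factor $1 - E(x)$ and the quantity $d_i(\xi)$, handles the second inequality.

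Combining these observations yields both claimed bounds in all cases. I do not expect any genuine obstacle here: the entire content is carried by Proposition~\ref{prop:inequality-1}, and the remaining steps---bounding $|\xi_i - \xi_j|$ from below by $d_i(\xi)$ or $d_j(\xi)$ and checking the sign of the coefficient---are routine. Accordingly the argument can be compressed to a single sentence, exactly as in the proof of Lemma~\ref{lem:FIC1-SM-properties}: set $u = x$ and $v = \xi$ in Proposition~\ref{prop:inequality-1} and invoke the definition of $d(\xi)$.
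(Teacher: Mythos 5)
Your proof is correct and takes essentially the same route as the paper, whose entire proof is the one-line observation you anticipate: set $u = x$ and $v = \xi$ in Proposition~\ref{prop:inequality-1} and invoke the definition of $d(\xi)$. The details you spell out---that $\xi$ has distinct components, that $d_j(\xi) \le |\xi_i - \xi_j|$ and $d_i(\xi) \le |\xi_i - \xi_j|$, and the sign dichotomy making the inequalities trivial when the scalar factor is negative---are exactly the routine steps the paper leaves implicit.
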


\begin{proof}
Setting in Proposition~\ref{prop:inequality-1} ${u = x}$ and ${v = \xi}$ and taking into account the definition of $d(\xi)$, we obtain the statement of the lemma.
\end{proof}

\begin{lemma} \label{lem:formula-Weierstrass-correction}
Let ${f \in \Kset[z]}$ be a polynomial of degree ${n \ge 2}$ and ${1 \le p \le \infty}$.
Suppose $x \in \Kset^n$ is a vector with distinct components satisfying
\begin{equation} \label{eq:FIC-SM}
\left\| \frac{W(x)}{d(x)} \right\|_p < \frac{1}{2^{1 / q}} \, .
\end{equation}
Then the vector $\hat{x} = x - W(x)$ has distinct components and
\begin{equation} \label{eq:formula-Weierstrass-correction}
W_i(\hat{x}) = (\hat{x}_i - x_i)
\sum_{j \neq i} {\frac{W_j(x)}{\hat{x}_i - x_j}}
\prod_{j \neq i}
{\left( 1 + \frac{\hat{x}_j - x_j}{\hat{x}_i - \hat{x}_j} \right)}
\qquad (i = 1, \ldots, n).
\end{equation}
\end{lemma}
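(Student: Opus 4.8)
The plan is to establish the distinctness of the components of $\hat{x}$ first, and then to obtain the formula \eqref{eq:formula-Weierstrass-correction} by evaluating the Lagrange-type representation of $f$ from Proposition~\ref{prop:Lagrange-formula-Weierstrass} at the points $\hat{x}_i$ and rearranging. For the distinctness, I would apply Proposition~\ref{prop:distinct-components} with $u = \hat{x}$ and $v = x$: since $\hat{x} - x = -W(x)$, the quotient $\|(\hat{x}-x)/d(x)\|_p$ coincides with $\|W(x)/d(x)\|_p$, which is ${< 1/2^{1/q}}$ by hypothesis \eqref{eq:FIC-SM}. Hence $\hat{x}$ has distinct components and each $W_i(\hat{x})$ is well defined.

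For the formula itself, I would first use the invariance of the Weierstrass correction under multiplication of $f$ by a nonzero constant to assume without loss of generality that $f$ is monic, so that Proposition~\ref{prop:Lagrange-formula-Weierstrass} applies. Evaluating the resulting identity at $z = \hat{x}_i$ gives
\[
f(\hat{x}_i) = \sum_{k=1}^n W_k(x) \prod_{j \ne k}(\hat{x}_i - x_j) + \prod_{j=1}^n (\hat{x}_i - x_j).
\]
The decisive observation is that $\hat{x}_i - x_i = -W_i(x)$: the $k = i$ summand equals $W_i(x)\prod_{j\ne i}(\hat{x}_i - x_j)$, while the final product equals $(\hat{x}_i - x_i)\prod_{j\ne i}(\hat{x}_i - x_j) = -W_i(x)\prod_{j\ne i}(\hat{x}_i - x_j)$, so these two cancel and leave
\[
f(\hat{x}_i) = \sum_{k \ne i} W_k(x) \prod_{j \ne k}(\hat{x}_i - x_j).
\]
Since each surviving product (with $k \ne i$) still contains the factor $\hat{x}_i - x_i$, I can pull $(\hat{x}_i - x_i)$ out of the entire sum.

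It then remains to divide by $\prod_{j \ne i}(\hat{x}_i - \hat{x}_j)$, the monic denominator of $W_i(\hat{x})$, and to convert the products over the $x_j$ into products over the $\hat{x}_j$. The latter step rests on the telescoping identity
\[
\frac{\prod_{j \ne i}(\hat{x}_i - x_j)}{\prod_{j \ne i}(\hat{x}_i - \hat{x}_j)} = \prod_{j \ne i}\frac{\hat{x}_i - x_j}{\hat{x}_i - \hat{x}_j} = \prod_{j \ne i}\left(1 + \frac{\hat{x}_j - x_j}{\hat{x}_i - \hat{x}_j}\right),
\]
which follows from writing $\hat{x}_i - x_j = (\hat{x}_i - \hat{x}_j) + (\hat{x}_j - x_j)$. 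Reintroducing the factor $\hat{x}_i - x_k$ so as to rewrite $\prod_{j\ne i,\, j\ne k}(\hat{x}_i - x_j)$ as $\frac{1}{\hat{x}_i - x_k}\prod_{j\ne i}(\hat{x}_i - x_j)$ then matches the sum $\sum_{j \ne i} W_j(x)/(\hat{x}_i - x_j)$ against the product over $j \ne i$ and produces exactly \eqref{eq:formula-Weierstrass-correction}. I expect the main point requiring care to be precisely this last index bookkeeping — cleanly separating the factor $j = i$ from each product and reconciling the single sum with the single product over $j \ne i$ — which I would carry out as a chain of elementary algebraic identities.
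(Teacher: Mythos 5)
Your proposal follows essentially the same route as the paper's proof: reduce to monic $f$, evaluate the Lagrange representation of Proposition~\ref{prop:Lagrange-formula-Weierstrass} at $z=\hat{x}_i$, exploit $\hat{x}_i-x_i=-W_i(x)$ to eliminate the $i$th contribution, pull out the factor $\hat{x}_i-x_i$, divide by $\prod_{j\ne i}(\hat{x}_i-\hat{x}_j)$, and convert via the telescoping identity $\hat{x}_i-x_j=(\hat{x}_i-\hat{x}_j)+(\hat{x}_j-x_j)$. Two cosmetic differences: the paper factors $(z-x_i)$ out of the identity before substituting, which forces it to treat the case $\hat{x}_i=x_i$ separately (it holds trivially there), whereas your cancellation of the $k=i$ summand against the full product is a polynomial identity needing no case split --- a mild simplification; and you make explicit the reduction to monic $f$ via the scaling invariance of $W_f$, which the paper uses only implicitly.

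There is, however, one genuine omission. Both the well-definedness of the right-hand side of \eqref{eq:formula-Weierstrass-correction} and your own manipulation
$\prod_{j\ne i,\,j\ne k}(\hat{x}_i-x_j)=\frac{1}{\hat{x}_i-x_k}\prod_{j\ne i}(\hat{x}_i-x_j)$
require $\hat{x}_i\ne x_j$ for all $j\ne i$, and this does \emph{not} follow from Proposition~\ref{prop:distinct-components}, which only yields that the components of $\hat{x}$ are pairwise distinct. The paper closes this by invoking Proposition~\ref{prop:inequality-1}(ii) with $u=\hat{x}$ and $v=x$: since $\hat{x}-x=-W(x)$,
\[
|\hat{x}_i-x_j|\ \ge\ \left(1-\left\|\frac{W(x)}{d(x)}\right\|_p\right)|x_i-x_j|\ >\ 0
\quad (i\ne j),
\]
because hypothesis \eqref{eq:FIC-SM} gives $\left\|W(x)/d(x)\right\|_p<2^{-1/q}\le 1$ and $x$ has distinct components. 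Adding this one line (and you may as well derive $\hat{x}_i\ne\hat{x}_j$ from part (i) of the same proposition, as the paper does, instead of the corollary) makes your argument complete.
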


\begin{proof}
Applying Proposition~\ref{prop:inequality-1} with $u = \hat{x}$ and $v = x$ and taking into account condition \eqref{eq:FIC-SM}, we conclude that 
$\hat{x}_i \neq \hat{x}_j$ and $\hat{x}_i \neq x_j$ for $i \neq j$. Hence, both sides of \eqref{eq:formula-Weierstrass-correction} are well-defined. In particular, the vector $\hat{x}$ has distinct components. 
Let ${i \in I_n}$ be fixed.
If $\hat{x}_i = x_i$, then \eqref{eq:formula-Weierstrass-correction} holds trivially. Assume that
$\hat{x}_i \neq x_i$.
It follows from Proposition~\ref{prop:Lagrange-formula-Weierstrass} that for every $z \in \Kset$ 
such that $z \neq x_j$ ($j = 1, \dots, n$),
\[
f(z) =
\left( 1 + \sum_{j = 1}^n {\frac{W_j(x)}{z - x_j}}   \right) \prod_{j = 1}^n {(z - x_j)}
\]
which can be rewritten in the form
\[
f(z) =
(z - x_i) \left( 1 + \frac{W_i(x)}{z - x_i} +\sum_{j \neq i} {\frac{W_j(x)}{z - x_j}}   \right) \prod_{j \neq i} {(z - x_j)}.
\]
Setting here $z = \hat{x}_i$ and taking into account that $\hat{x}_i = x_i - W_i(x)$, we get
\[
f(\hat{x}_i) =
(\hat{x}_i - x_i) \sum_{j \neq i} {\frac{W_j(x)}{\hat{x}_i - x_j}} \prod_{j \neq i} {(\hat{x}_i - x_j)}.
\]
Therefore,
\[
W_i(\hat{x}) =
\frac{f(\hat{x}_i)}{\displaystyle\prod_{j \neq i} {(\hat{x}_i - \hat{x}_j)}} = (\hat{x}_i - x_i)
\sum_{j \neq i} {\frac{W_j(x)}{\hat{x}_i - x_j}}
\prod_{j \neq i} {\frac{\hat{x}_i - x_j}{\hat{x}_i - \hat{x}_j}}
\]
which coincides with \eqref{eq:formula-Weierstrass-correction}.
\end{proof}

\begin{lemma} \label{lem:Weierstrass-semilocal-convergence}
Let ${f \in \Kset[z]}$ be a polynomial of degree ${n \ge 2}$ and ${1 \le p \le \infty}$.
Suppose ${x \in \Kset^n}$ is a vector with distinct components such that
\begin{equation} \label{eq:FIC-CS-E(x)}
E(x) = \left\| \frac{W(x)}{d(x)} \right\|_p < \frac{1}{2^{1 / q}} \, .
\end{equation}
Then the vector $Tx = x - W(x)$ has distinct components and
\begin{equation} \label{eq:Weierstrass-semilocal-convergence-iterated-contraction}
E(Tx) \le \varphi(E(x))
\quad\text{and}\quad
\|Tx - T^2x\| \preceq \beta(E(x)) \, \|x - Tx\|,
\end{equation}
where the real functions $\varphi$ and $\beta$ are defined by
\begin{equation} \label{eq:Weierstrass-semilocal-convergence-varphi}
\varphi(t) = \frac{(n - 1)^{1/q} \, t^2}{(1 - t)(1 - 2^{1/q} \, t)}
\left (1 + \frac{t}{(n - 1)^{1/p} \, (1 - 2^{1/q} \, t)} \right )^{n - 1} ,
\end{equation}
\begin{equation} \label{eq:Weierstrass-semilocal-convergence-beta}
\beta(t) = \frac{(n - 1)^{1/q} \, t}{1 - t}
\left (1 + \frac{t}{(n - 1)^{1/p} \, (1 - 2^{1/q} \, t)} \right )^{n - 1} .
\end{equation}
\end{lemma}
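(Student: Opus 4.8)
The plan is to write $\hat{x} = Tx = x - W(x)$ and reduce everything to the explicit formula for $W_i(\hat{x})$ supplied by Lemma~\ref{lem:formula-Weierstrass-correction}, whose hypothesis \eqref{eq:FIC-SM} is precisely our assumption \eqref{eq:FIC-CS-E(x)}. That lemma already yields that $\hat{x}$ has distinct components, so the only remaining tasks are the two inequalities in \eqref{eq:Weierstrass-semilocal-convergence-iterated-contraction}. First I would record the identities $\hat{x}_i - x_i = -W_i(x)$ and $\hat{x}_j - x_j = -W_j(x)$, and then apply Proposition~\ref{prop:inequality-1} with $u = \hat{x}$ and $v = x$. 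Since $\hat{x} - x = -W(x)$, the quantity $\|(\hat{x}-x)/d(x)\|_p$ appearing there equals $E(x)$, so I obtain the lower bounds $|\hat{x}_i - x_j| \ge (1 - E(x))\, d_j(x)$ and $|\hat{x}_i - \hat{x}_j| \ge (1 - 2^{1/q} E(x))\, d_j(x)$ for $j \ne i$.

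Writing $t = E(x)$, I would then estimate the three factors on the right of \eqref{eq:formula-Weierstrass-correction} separately. The first factor is exactly $|W_i(x)|$. For the middle sum I would use $|\hat{x}_i - x_j| \ge (1-t)\, d_j(x)$ and apply the power mean inequality $M_1 \le M_p$ (Proposition~\ref{prop:power-mean-inequality}) to the $n-1$ numbers $|W_j(x)|/d_j(x)$, which produces the factor $(n-1)^{1/q}$ and the bound $(n-1)^{1/q}\, t/(1-t)$. For the product I would set $v_j = (\hat{x}_j - x_j)/(\hat{x}_i - \hat{x}_j)$, estimate $|v_j| \le |W_j(x)|/((1 - 2^{1/q} t)\, d_j(x))$, and invoke Proposition~\ref{prop:product-lemma} to get $|\prod_{j\ne i}(1+v_j)| \le (1 + \|v\|_p/(n-1)^{1/p})^{n-1}$ with $\|v\|_p \le t/(1 - 2^{1/q} t)$. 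Multiplying the three bounds yields $|W_i(\hat{x})| \le \beta(E(x))\, |W_i(x)|$ with $\beta$ as in \eqref{eq:Weierstrass-semilocal-convergence-beta}. Since $x - Tx = W(x)$ and $Tx - T^2 x = W(\hat{x})$, this is exactly the coordinate-wise form of the second inequality in \eqref{eq:Weierstrass-semilocal-convergence-iterated-contraction}.

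For the first inequality I would bound $d_i(\hat{x})$ from below. Applying Proposition~\ref{prop:inequality-2} with $u = \hat{x}$, $v = x$ gives $d(\hat{x}) \succeq (1 - 2^{1/q} t)\, d(x)$, hence $d_i(\hat{x}) \ge (1 - 2^{1/q} t)\, d_i(x)$. Combining this with the coordinate estimate obtained above yields
\[
\frac{|W_i(\hat{x})|}{d_i(\hat{x})} \le \frac{\beta(t)}{1 - 2^{1/q} t} \cdot \frac{|W_i(x)|}{d_i(x)},
\]
and taking the $p$-norm gives $E(Tx) \le t\,\beta(t)/(1 - 2^{1/q} t)$. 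A direct substitution of \eqref{eq:Weierstrass-semilocal-convergence-beta} shows this coincides with $\varphi(t)$ in \eqref{eq:Weierstrass-semilocal-convergence-varphi}, completing the proof.

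I expect the main obstacle to be the bookkeeping in the middle step: correctly matching the two distinct denominators — $(1-t)$ coming from the interpolation nodes $x_j$ and $(1 - 2^{1/q} t)$ coming from the iterates $\hat{x}_j$ — so that the assembled product reproduces $\beta$ exactly, and checking that the power mean and product estimates remain valid, under the stated conventions, in the extreme cases $p = 1$ and $p = \infty$. The analytic content beyond this is light, since condition \eqref{eq:FIC-CS-E(x)} guarantees $1 - 2^{1/q} t > 0$ and a fortiori $1 - t > 0$, so all denominators are positive throughout.
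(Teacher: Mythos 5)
Your proposal is correct and follows essentially the same route as the paper's proof: the explicit formula of Lemma~\ref{lem:formula-Weierstrass-correction}, the lower bounds from Proposition~\ref{prop:inequality-1} with $u = \hat{x}$, $v = x$, the power mean inequality $M_1 \le M_p$ for the sum, Proposition~\ref{prop:product-lemma} for the product, and finally $d(Tx) \succeq (1 - 2^{1/q}E(x))\,d(x)$ to pass from $\|W(Tx)\| \preceq \beta(E(x))\,\|W(x)\|$ to $E(Tx) \le \varphi(E(x))$. The only cosmetic differences are that you cite Proposition~\ref{prop:inequality-2} (an immediate consequence of Proposition~\ref{prop:inequality-1}(i), which the paper uses directly) and bound $|W_i(\hat{x})|$ coordinate-wise rather than through the paper's max-based quantities $\sigma$ and $\mu$, which amounts to the same computation.
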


\begin{proof}
Proposition~\ref{prop:inequality-1}(i) with $u = Tx$ and $v = x$ yields
\begin{equation} \label{eq:d(Tx)-d(x)}
d(Tx) \succeq \psi(E(x)) \, d(x),
\end{equation}
where the real function $\psi$ is defined by
\begin{equation} \label{eq:Weierstrass-semilocal-convergence-psi}
\psi(t) = 1 - 2^{1 / q} \, t .
\end{equation}
Obviously, \eqref{eq:d(Tx)-d(x)} implies that $Tx$ has distinct components.
For the sake of simplicity, we use the following notations:
\[
\hat{x} = Tx, \quad a = (n - 1)^{1 / q}, \quad b = 2^{1 / q} \quad\text{and}\quad c = (n - 1)^{1 / p} .
\] 
It follows from Lemma~\ref{lem:formula-Weierstrass-correction} that
\begin{equation} \label{eq:W(Tx)-W(x)}
\|W(Tx)\| \preceq \sigma \mu \, \|W(x)\|,
\end{equation}
where
\[
\sigma = \max_{i \in I_n} \sum_{j \neq i} {\left| \frac{W_j(x)}{\hat{x}_i - x_j} \right|}
\quad\text{and}\quad
\mu = \max_{i \in I_n} \prod_{j \neq i} {\left( 1 + \left| \frac{W_j(x)}{\hat{x}_i - \hat{x}_j} \right| \right)}.
\]
Applying Proposition~\ref{prop:inequality-1} with  $u = \hat{x}$ and $v = x$, we get
\[
|\hat{x}_i - x_j| \ge (1 - E(x)) \, d_j(x)
\quad\text{and}\quad
|\hat{x}_i - \hat{x}_j| \ge (1 - b \, E(x)) \,  d_j(x).
\]
Using the power mean inequality ${M_1 \le M_p}$ and Proposition~\ref{prop:product-lemma}, we obtain
\begin{equation} \label{eq:sigma-mu-bounds}
\sigma \le \frac{a \, E(x)}{1 - E(x)}
\quad\text{and}\quad
\mu \le \left( 1 + \frac{E(x)}{c \, (1 - b \, E(x))} \right)^{n - 1}.
\end{equation}
Now from \eqref{eq:W(Tx)-W(x)} and \eqref{eq:sigma-mu-bounds}, we get
\begin{equation} \label{eq:Weierstrass-semilocal-convergence-iterated-contraction-W}
\|W(Tx)\| \preceq \beta(E(x)) \, \|W(x)\|
\end{equation}
which coincides with the second inequality in \eqref{eq:Weierstrass-semilocal-convergence-iterated-contraction} since  
${W(x) = x - Tx}$.
It follows from \eqref{eq:Weierstrass-semilocal-convergence-iterated-contraction-W} and \eqref{eq:d(Tx)-d(x)} that
\[
\left\| \frac{W(Tx)}{d(Tx)} \right\| \preceq
\frac{\beta(E(x))}{\psi(E(x))} \left\| \frac{W(x)}{d(x)} \right\|.
\]
Taking the $p$-norm here, we get the first inequality in \eqref{eq:Weierstrass-semilocal-convergence-iterated-contraction} 
since $\beta = \phi \, \psi$.
\end{proof}

Now we can state and prove our semilocal convergence theorem for the Weierstrass method which is the main result of this section.

\begin{theorem} 
\label{thm:semilocal-convergence-Weierstrass}
Let $\Kset$ be a complete normed field,
$f \in \Kset[z]$ be a polynomial of degree $\, n \ge 2$ and $1 \le p \le \infty$.
Suppose $x^0 \in \Kset^n$ is an initial guess with distinct components satisfying
\begin{equation} \label{eq:semilocal-convergence-Weierstrass-initial-conditions}
E(x^0) < 1 / 2^{1/q} \quad\text{and}\quad \phi(E(x^0)) \le 1 ,
\end{equation}
where the function $E$ is defined by \eqref{eq:FIC3-SM} and the real function $\phi$ is defined by
\begin{equation} \label{eq:semilocal-convergence-Weierstrass-phi}
\phi(t) = \frac{(n - 1)^{1/q} \, t}{(1 - t)(1 - 2^{1/q} \, t)}
\left (1 + \frac{t}{(n - 1)^{1/p} \, (1 - 2^{1/q} \, t)} \right )^{n - 1} .
\end{equation}
Then the following statements hold true.
\begin{enumerate}[(i)]
\item 
\textsc{Convergence}. Starting from $x^0$, the Weierstrass iteration \eqref{eq:Weierstrass-iteration} is well-defined, remains in the closed ball ${\overline{U}(x_0,\rho)}$ and converges to a root-vector $\xi$ of $f$, where 
\[
\rho =  \frac{\| W(x^0)\|}{1 - \beta(E(x^0))}
\]
and the real function $\beta$ is defined by \eqref{eq:Weierstrass-semilocal-convergence-beta}.
Besides, the convergence is quadratic provided that ${\phi(E(x^0)) < 1}$.
\item 
\textsc{A priori estimate}. For all $n \ge 0$ we have the estimate
\begin{equation} \label{eq:Semilocal-Convergence-Weierstrass-a-priori-estimate}
\|x^k - \xi\| \preceq \frac{\theta^k \, \lambda ^{2^k - 1}}{1 - \theta \, \lambda^{2^k}} \, \|x^1 - x^0\|,
\end{equation}
where $\lambda = \phi(E(x^0))$, $\theta  = \psi(E(x^0))$ and the real functions $\psi$ is defined by 
\eqref{eq:Weierstrass-semilocal-convergence-psi}.
\item 
\textsc{First a posteriori estimate}. For all $k \ge 0$ we have the following estimate
\begin{equation} \label{eq:semilocal-convergence-Weierstrass-a-posteriori-estimate-1}
\|x^k - \xi\| \preceq \frac{\|x^{k+1} - x^k\|}{1 - \beta(E(x^k))} \, .
\end{equation}
\item 
\textsc{Second a posteriori estimate}. For all $k \ge 0$ we have the following estimate
\begin{equation} \label{eq:semilocal-convergence-Weierstrass-a-posteriori-estimate-2}
\|x^{k+1} - \xi\| \preceq \frac{\theta_k \lambda_k}{1 - \theta_k (\lambda_k)^2} \, \|x^{k+1} - x^k\| ,
\end{equation}
where $\lambda_k = \phi(E(x^k))$, $\theta_k = \psi(E(x^k))$.
\item \textsc{Some other estimates}. For all $k \ge 0$ we have
\begin{equation} \label{eq:semilocal-convergence-Weierstrass-other-estimates-1}
\|x^{k+2} - x^{k+1}\| \preceq \theta \, \lambda^{2^k} \, \|x^{k+1} - x^k\|,
\end{equation}
\begin{equation} \label{eq:semilocal-convergence-Weierstrass-other-estimates-2}
\|x^{k+1} - x^k\| \preceq \theta^k \, \lambda^{2^k - 1} \, \|x^1 - x^0\|.
\end{equation}
\item \textsc{Localization of the zeros}.
If ${\phi(E(x^0 )) < 1}$, then $f$ has $n$ simple zeros in $\Kset$.
Moreover, for every ${k \ge 0}$ the closed disks
\begin{equation}  \label{disks-disjoint-Weierstrass}
D_i^k = \{z \in \Kset : |z-x_i^{k}| \le r_i^k \},
\quad i = 1,2,\ldots,n,
\end{equation}
where
\[
r_i^k = \frac{|W_i(x^k)|}{1 - \beta(E(x^k)} \, ,
\]
are mutually disjoint and each of them contains exactly one zero of $f$.	
\end{enumerate}
\end{theorem}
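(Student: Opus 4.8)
The plan is to deduce the whole theorem from the general semilocal machinery of Section~\ref{sec:General-convergence-theorems-in-cone-metric-spaces}, using Lemma~\ref{lem:Weierstrass-semilocal-convergence} as the single nontrivial analytic input and treating the zero-localization (vi) separately via Proposition~\ref{prop:localization}. I work in the cone normed space $(\Kset^n,\|\cdot\|)$ over the solid vector space $\Rset^n$; since $\Kset$ is a complete normed field this space is complete, and the cone distance of $x$ and $Tx$ is $\|x-Tx\|=\|W(x)\|$, which is exactly the convergence function $F(x)=d(x,Tx)$ appearing in Corollary~\ref{cor:third-convergence-theorem-CMS} (with $\gamma\equiv 1$).

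First I would fix the interval. On $[0,1/2^{1/q})$ the function $\phi$ of \eqref{eq:semilocal-convergence-Weierstrass-phi} is a product of nonnegative nondecreasing factors, hence nondecreasing, with $\phi(0)=0$ and $\phi(t)\to\infty$ as $t\uparrow 1/2^{1/q}$; thus there is a unique $R\in(0,1/2^{1/q})$ with $\phi(R)=1$, and the two requirements in \eqref{eq:semilocal-convergence-Weierstrass-initial-conditions} together amount to $E(x^0)\in J:=[0,R]$. Comparing \eqref{eq:Weierstrass-semilocal-convergence-varphi}, \eqref{eq:Weierstrass-semilocal-convergence-beta}, \eqref{eq:Weierstrass-semilocal-convergence-psi} and \eqref{eq:semilocal-convergence-Weierstrass-phi}, I record the factorizations $\varphi(t)=t\,\phi(t)$ and $\beta(t)=\phi(t)\,\psi(t)$, so that $\phi$ satisfies \eqref{eq:standard-condition-phi} and $\psi$ satisfies \eqref{eq:standard-condition-psi}. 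Writing $\phi(t)=t\,[\phi(t)/t]$ with $\phi(t)/t$ visibly nondecreasing on $J$ shows $\phi$ is quasi-homogeneous of degree $1$, so $\varphi=t\phi$ is quasi-homogeneous of degree $2$; since $\varphi(R)=R\,\phi(R)=R$, Proposition~\ref{prop:gauge-function-sufficient-condition} makes $\varphi$ a gauge function of order $2$ on $J$ (strict on $[0,R)$).

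Next I would check the two standing hypotheses on $\beta$. Because $t\,\beta(t)=t^2[\beta(t)/t]$ with $\beta(t)/t$ nondecreasing on $J$, the product $t\,\beta(t)$ is quasi-homogeneous of degree $2$; and since $\phi(t)\le\phi(R)=1$ while $\psi(t)=1-2^{1/q}t<1$ for $t>0$, we get $\beta(t)=\phi(t)\psi(t)<1$ on $(0,R]$, so $t\beta(t)$ is a strict gauge function of order $2$, giving \eqref{eq:standard-condition-beta-1}; condition \eqref{eq:standard-condition-beta-2} is immediate since $\phi(t)=0$ only at $t=0$, where $\beta(0)=0$. Now Lemma~\ref{lem:Weierstrass-semilocal-convergence} supplies the remaining structure: for every $x\in\mathcal{D}$ with $E(x)\in J$ it yields $Tx\in\mathcal{D}$, $E(Tx)\le\varphi(E(x))$ and $\|Tx-T^2x\|\preceq\beta(E(x))\,\|x-Tx\|$. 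Hence $E$ is a function of initial conditions of $T$ with gauge function $\varphi$ of order $2$ on $J$, $T$ is an iterated contraction with respect to $E$ with control function $\beta$ in the sense of Definition~\ref{def:iterated-contraction-E}, and, since $E(x)\in J$ forces $Tx\in\mathcal{D}$, Proposition~\ref{prop:initial-point-test} shows every $x^0\in\mathcal{D}$ with $E(x^0)\in J$ is an initial point of $T$. Conclusions (i)--(v) then follow from Corollary~\ref{cor:third-convergence-theorem-CMS} with $F(x)=d(x,Tx)=\|W(x)\|$ and $\gamma\equiv 1$, the identification of the limit $\xi$ as a root-vector of $f$ coming from Proposition~\ref{prop:Weierstrass-iteration-existence-theorem} (which I use in place of the abstract continuity argument, as $T$ need not be continuous on $\overline{U}(x^0,\rho)$); quadratic convergence holds exactly when $\lambda=\phi(E(x^0))<1$.

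Finally, for the localization (vi) assume $\phi(E(x^0))<1$, i.e.\ $E(x^0)<R$. Since $\varphi$ is a gauge function, $E(x^{k+1})\le\varphi(E(x^k))\le E(x^k)$, so $E(x^k)\le E(x^0)<R$ for all $k$ and, $\phi$ being nondecreasing, $\phi(E(x^k))<1$. I would apply Proposition~\ref{prop:localization} at each iterate with $\Phi=W$, $\gamma\equiv 1$ and the above $\beta$, on the interval $[0,R)$: its hypothesis \eqref{eq:gamma-beta} reads $\beta(t)<1-2^{1/q}t=\psi(t)$, which holds on $[0,R)$ because $\beta=\phi\psi$ with $\phi<1$ there. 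This gives that the disks $D_i^k$ of radius $r_i^k=|W_i(x^k)|/(1-\beta(E(x^k)))$ are mutually disjoint. On the other hand the first a posteriori estimate \eqref{eq:semilocal-convergence-Weierstrass-a-posteriori-estimate-1}, read componentwise, gives $|x_i^k-\xi_i|\le r_i^k$, so each zero $\xi_i$ lies in $D_i^k$; disjointness of the $n$ disks then forces the $\xi_i$ to be distinct, whence $f$ has $n$ simple zeros and each disk contains exactly one of them. The one genuinely nontrivial estimate, the iterated-contraction bound, is already isolated in Lemma~\ref{lem:Weierstrass-semilocal-convergence}, so the main obstacle in this proof is the structural bookkeeping of the second and third paragraphs: confirming that the explicit $\varphi$, $\beta$, $\psi$, $\phi$ obey \eqref{eq:standard-condition-phi}--\eqref{eq:standard-condition-psi} and the gauge and quasi-homogeneity requirements on the correct interval $J=[0,R]$, which is precisely what lets the abstract theorem fire.
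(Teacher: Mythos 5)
Your proposal is correct and follows essentially the same route as the paper's own proof: the unique fixed point $R$ of $\phi(t)=1$ defining $J=[0,R]$, Proposition~\ref{prop:gauge-function-sufficient-condition} for $\varphi$, verification of \eqref{eq:standard-condition-beta-1}--\eqref{eq:standard-condition-beta-2} via $\beta=\phi\,\psi$, Lemma~\ref{lem:Weierstrass-semilocal-convergence} plus Proposition~\ref{prop:initial-point-test} feeding Corollary~\ref{cor:third-convergence-theorem-CMS} with $F(x)=\|W(x)\|$ and $\gamma\equiv 1$, Proposition~\ref{prop:Weierstrass-iteration-existence-theorem} to identify the limit as a root-vector, and Proposition~\ref{prop:localization} combined with the first a posteriori estimate for claim (vi). You merely spell out a few steps the paper leaves implicit (existence and uniqueness of $R$, the quasi-homogeneity computations, and the monotone bound $E(x^k)\le E(x^0)<R$), which is sound.
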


\begin{proof}
Denote by $R = R(n, p)$ the unique solution of the equation
$\phi(t) = 1$ in the interval $(0, 1 / 2^{1/q})$. Then the initial conditions 
\eqref{eq:semilocal-convergence-Weierstrass-initial-conditions} can be rewritten in the form $E(x^0) \le R$.
The function $\varphi$ defined by \eqref{eq:Weierstrass-semilocal-convergence-varphi} is quasi-homogeneous of the second degree on 
$[0, 1 / 2^{1/q})$ and $R$ is its fixed point. It follows from Proposition~\ref{prop:gauge-function-sufficient-condition} that $\varphi$
is a gauge function of the second order on $J = [0,R]$.
The function $\beta$ is increasing on $J$ satisfying $\beta = \phi \, \psi$ and
$\beta(R) = \psi(R) < 1$. Hence, the function $t \beta(t)$ is a strict gauge function of the second order on $J$.
Now we shall apply Corollary~\ref{cor:third-convergence-theorem-CMS} to the Weierstrass iteration function 
$T \colon \mathcal{D} \subset \Kset^n \to \Kset^n$.
From Lemma~\ref{lem:Weierstrass-semilocal-convergence} and Proposition~\ref{prop:initial-point-test}, we conclude that: 

$\bullet$ ${E \colon \mathcal{D} \to \Rset_+}$ is a functions of initial conditions of $T$ with gauge function $\varphi$ 
of order ${r = 2}$ on $J$.

$\bullet$ ${T \colon \mathcal{D} \to \Kset^n}$ is an iterated contraction with respect to $E$ with control function $\beta$. 

$\bullet$ Every vector ${x^0 \in \mathcal{D}}$ satisfying condition \eqref{eq:semilocal-convergence-Weierstrass-initial-conditions} is an initial point of $T$.

Now it follows Corollary~\ref{cor:third-convergence-theorem-CMS} that the Weierstrass iteration \eqref{eq:Weierstrass-iteration} 
is well-defined and converges to a vector $\xi \in \Kset^n$. 
The claims (i)-(v) follow immediately from Corollary~\ref{cor:third-convergence-theorem-CMS} and 
Proposition~\ref{prop:Weierstrass-iteration-existence-theorem}. 
It remains to prove the claim (vi).
Let ${E(x^0) < 1/2}$ and ${\phi(E(x^0)) < 1}$, that is, ${E(x^0) < R}$.
Then ${E(x^k) < R}$ since every $x^k$ is an initial point of $T$.
Taking into account that ${\beta (t) = \phi (t) \, \psi(t)}$ and ${\phi(t) < 1}$ on
${[0,R)}$, we conclude that ${\beta(t) < \psi(t)}$ for all ${t \in [0,R)}$. 
Therefore condition \eqref{eq:gamma-beta} holds with ${\gamma(t) \equiv 1}$.
Applying Proposition~\ref{prop:localization} with ${\Phi = W}$ we conclude that the 
disks \eqref{disks-disjoint-Weierstrass} are mutually disjoint. 
On the other hand, it follows from claim (iii) that each of these disks contains at least one zero of $f$.
Therefore, each of the disks \eqref{disks-disjoint-Weierstrass} contains exactly one zero of $f$. 
In particular, $f$ has $n$ simple zeros in $\Kset$.
This complete the proof of the theorem.
\end{proof}

It should be noted that recently Proinov and Petkova \cite{PP14a} proved another semilocal convergence theorem for 
the Weierstrass method under initial conditions involving the Vi\`ete operator.

\begin{remark} \label{rem:semilocal-convergence-Weierstrass-initial-conditions-2}
Let $R(n,p)$ be the unique solution of the equation ${\phi(t) = 1}$ in the interval $(0, 1/2^{1/q})$, where $\phi$ is defined by 
\eqref{eq:semilocal-convergence-Weierstrass-phi}. Then the initial conditions 
\eqref{eq:semilocal-convergence-Weierstrass-initial-conditions} of 
Theorem~\ref{thm:semilocal-convergence-Weierstrass} can also be written in the following equivalent form
\begin{equation} \label{eq:semilocal-convergence-Weierstrass-initial-conditions-2}
E(x^0) \le R(n,p),
\end{equation}
where ${E \colon \mathcal{D} \to \Rset_+}$ is defined by \eqref{eq:FIC3-SM}. 
According to Theorem~\ref{thm:semilocal-convergence-Weierstrass} the domain of quadratic convergence of the Weierstrass method is the set
\begin{equation} \label{eq:domain-of-quadratic-convergence-of-the-Weierstrass-method}
\mathscr{A} = \{ x \in \mathcal{D} : E(x) < R(n,p) \}.
\end{equation}
Below we show that this cannot be improved.
\end{remark}.

\begin{example}
In this example we show that if ${p = \infty}$, then the domain of quadratic convergence \eqref{eq:domain-of-quadratic-convergence-of-the-Weierstrass-method} of the Weierstrass method as well as the estimates
\eqref{eq:Semilocal-Convergence-Weierstrass-a-priori-estimate},
\eqref{eq:semilocal-convergence-Weierstrass-a-posteriori-estimate-1},
\eqref{eq:semilocal-convergence-Weierstrass-a-posteriori-estimate-2},
\eqref{eq:semilocal-convergence-Weierstrass-other-estimates-1},
\eqref{eq:semilocal-convergence-Weierstrass-other-estimates-2}
cannot be improved in the sense that for every polynomial ${f(z) = a_0 z^2 + a_1 z + a_2}$ in $\Kset[z]$ with zero discriminant, 
there exist infinitely many initial guesses ${x^0 \in \Kset^2}$ such that ${E(x^0) = R(2,\infty) = \frac{1}{4}}$ and the following 
two statements hold true:
\begin{enumerate}[(a)]
	\item The Weierstrass iteration \eqref{eq:Weierstrass-iteration} is well-defined and converges linearly to the root-vector 
	${\xi = (a,a)}$ of $f$, where ${a = - a_1/(2 a_0)}$.
	\item Each of the estimates 
	\eqref{eq:Semilocal-Convergence-Weierstrass-a-priori-estimate}-\eqref{eq:semilocal-convergence-Weierstrass-other-estimates-2}  
becomes equality. 
\end{enumerate}
 
To prove this, we choose a vector ${x^0 = (a + b,a - b)}$, where $b$ is an arbitrary element of $\Kset$.
It is easy to calculate that ${E(x^0) = R(2,\infty) = \frac{1}{4}}$, where $E$ is defined by \eqref{eq:FIC3-SM} with ${p = \infty}$. 
According to Theorem~\ref{thm:semilocal-convergence-Weierstrass}, the Weierstrass iteration \eqref{eq:Weierstrass-iteration} is 
well-defined and converges to a root-vector $\xi$ of $f$ with estimates
\eqref{eq:Semilocal-Convergence-Weierstrass-a-priori-estimate}-\eqref{eq:semilocal-convergence-Weierstrass-other-estimates-2},
which can be written in the following form:
\begin{eqnarray} \label{eq:exact-estimates}
\|x^k - \xi\| & \preceq & 2 \left( \frac{1}{2}\right)^k \|x^1 - x^0\|, \nonumber\\
\|x^k - \xi\| & \preceq & 2 \, \|x^{k+1} - x^k\|, \nonumber\\
\|x^{k} - \xi\| & \preceq & \|x^{k} - x^{k-1}\|,\\
\|x^{k+2} - x^{k+1}\| & \preceq & \frac{1}{2}  \, \|x^{k+1} - x^k\|, \nonumber\\
\|x^{k+1} - x^k\| & \preceq & \left( \frac{1}{2}\right)^k  \|x^1 - x^0 \nonumber\|.
\end{eqnarray}
On the other hand, the Weierstrass iteration \eqref{eq:Weierstrass-iteration} for ${f(z) = a_0 z^2 + a_1 z + a_2}$
with the initial guess ${x^0 = (a + b,a - b)}$ can be written in the form
\[
x^k = \left( a + \frac{b}{2^k},a - \frac{b}{2^k} \right).
\]
Hence, the sequence $(x^k)$ converges to the vector ${\xi = (a,a)}$. Besides,
\[
x^{k+1} -\xi = \frac{1}{2} \, (x^k - \xi). 
\]
This shows that the order of convergence $(x^k)$ is exactly one (with asymptotic constant $\frac{1}{2}$) which proves the statement (a).
Furthermore, it is easy to check that $(x^k)$ satisfies the following identities:
\begin{eqnarray*}
x^k - \xi  & = & 2 \left( \frac{1}{2}\right)^k (x^0 - x^1),\\
x^k - \xi & = & 2 \, (x^k - x^{k + 1}),\\
x^k - \xi & = & x^{k-1} - x^k),\\
x^{k+1} - x^{k+2} & = & \frac{1}{2} (x^k - x^{k+1}),\\
x^k - x^{k+1} & = & \left( \frac{1}{2}\right)^k  (x^0 - x^1).
\end{eqnarray*}
These identities show that all the inequalities \eqref{eq:exact-estimates} become equalities.
This proves the statement (b).
\qed
\end{example}

\begin{remark}
It follows from Theorem~\ref{thm:semilocal-convergence-Weierstrass} and Lemma~\ref{eq:formula-Weierstrass-correction} that if an initial guess $x^0 \in \Kset^n$ satisfies the initial conditions \eqref{eq:semilocal-convergence-Weierstrass-initial-conditions}  
for some $1 \le p \le \infty$, then the Weierstrass iteration
\eqref{eq:Weierstrass-iteration} is well-defined and can be presented in the following two-point form:
\[
x^1 = x^0 - W_f(x^0),
\]
\begin{equation} \label{eq:Weierstrass-iteration-two-step-method}
x_i^{k + 1} = x_i^{k} - (x_i^k - x_i^{k - 1}) \,
\sum_{j \neq i} {\frac{x_j^{k - 1} - x_j^k}{x_i^k - x_j^{k - 1}}} \,
\prod_{j \neq i} {\frac{x_i^k - x_j^{k - 1}}{x_i^k - x_j^k}}
\qquad (i = 1, \ldots, n)
\end{equation}
\[
k =1, 2, \ldots
\]
Let us note that in the Weierstrass method \eqref{eq:Weierstrass-iteration}, we need to compute the polynomial values $f(x_i^k)$ for each iterative step. Under the initial conditions \eqref{eq:semilocal-convergence-Weierstrass-initial-conditions} the two-point iterative method \eqref{eq:Weierstrass-iteration-two-step-method} is equivalent to the Weierstrass method 
\eqref{eq:Weierstrass-iteration} but no longer needs to evaluate $f(x_i^k)$ for each iteration after the first step.
The two-step iterative process \eqref{eq:Weierstrass-iteration-two-step-method} was first presented in 1964 by 
Dochev and Byrnev \cite{DB64} in a slightly different form (see also Zheng \cite{Zhe82} and Yao \cite{Yao00}).
\end{remark}

\begin{remark}
In an earlier work \cite{Pro06b} (see also \cite{Pro06c}) we have stated without proof a weaker version of Theorem~\ref{thm:semilocal-convergence-Weierstrass}. In \cite{Pro06b} we give a detailed comparison of our old theorem with previous results.  
It should be noted that all corollaries given in \cite{Pro06b} can be improved using Theorem~\ref{thm:semilocal-convergence-Weierstrass} instead of Theorem~1 of \cite{Pro06b}. We end this section with a result which improves Corollary~2 of \cite{Pro06b}.
This result generalizes and improves the results of \cite {Pre80,WZ95,PCT95,Pet96,PHI98,Bat98,Han00}.     
\end{remark} 

\begin{corollary} \label{cor:semilocal-convergence-Weierstrass-2}
Let $\Kset$ be a complete normed field,
$f \in \Kset[z]$ be a polynomial of degree $n \ge 2$ and $1 \le p \le \infty$.
Suppose $x^0 \in \Kset^n$ is an initial guess with distinct components satisfying
\[
\left\| \frac{W(x^0)}{d(x^0)} \right\|_p \le \frac{1}{2(n - 1)^{1/q} + 2} .
\]
Then all conclusions \emph{(i)-(v)} of Theorem~\ref{thm:semilocal-convergence-Weierstrass} hold true.
Moreover, if $n \ge 3$, then $f$ has $n$ simple zeros in $\Kset$, 
the Weierstrass iteration \eqref{eq:Weierstrass-iteration} converges quadratically to $\xi$ 
and for every ${k \ge 0}$ the closed disks \eqref{disks-disjoint-Weierstrass} are mutually disjoint and each of them contains exactly one zero of $f$. 
\end{corollary}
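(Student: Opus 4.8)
The plan is to deduce the corollary directly from Theorem~\ref{thm:semilocal-convergence-Weierstrass}: all that is really needed is to show that the hypothesis $E(x^0)\le\tau$, with $\tau=1/(2(n-1)^{1/q}+2)$, forces the initial conditions \eqref{eq:semilocal-convergence-Weierstrass-initial-conditions}. The bound $E(x^0)<1/2^{1/q}$ is immediate, since $2^{1/q}\le 2<2(n-1)^{1/q}+2$ gives $\tau<1/2^{1/q}$. Because the function $\phi$ of \eqref{eq:semilocal-convergence-Weierstrass-phi} is increasing on $[0,1/2^{1/q})$ and $R(n,p)$ is its unique fixed point there with $\phi(R(n,p))=1$, the remaining requirement $\phi(E(x^0))\le 1$ reduces to the single scalar inequality $\phi(\tau)\le 1$, equivalently $\tau\le R(n,p)$; and the \emph{strict} version $\phi(\tau)<1$ is exactly what will later yield the $n\ge 3$ conclusions.

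Next I would substitute $\tau$ into $\phi$. Writing $a=(n-1)^{1/q}$ and $b=2^{1/q}$, and using $\tau=1/(2(a+1))$ together with $1-\tau=(2a+1)/(2(a+1))$ and $1-b\tau=(2a+2-b)/(2(a+1))$, the three factors collapse to
\[
\phi(\tau)=\frac{2a(a+1)}{(2a+1)(2a+2-b)}\left(1+\frac{a}{(n-1)(2a+2-b)}\right)^{n-1}.
\]
For each fixed degree $n$ this is a function of the single parameter $q$ (through $a$ and $b$). The crux is to locate its maximum: the claim is that it is attained at $q=1$, i.e. $p=\infty$, where $a=n-1$ and $b=2$, so that $2a+2-b=2(n-1)$ and the expression simplifies to the one-variable quantity $g(n)=\dfrac{n}{2n-1}\bigl(1+\tfrac{1}{2(n-1)}\bigr)^{n-1}$.

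It then remains to prove $g(n)\le 1$ with equality precisely at $n=2$. I would check $g(2)=1$ and, taking logarithms and using $\ln(1+x)\le x-\tfrac{x^2}{2}+\tfrac{x^3}{3}$ together with the behaviour of $\ln\frac{n}{2n-1}$, show that $g$ is strictly decreasing for $n\ge 2$; hence $g(n)<1$ for $n\ge 3$. This gives $\phi(\tau)\le 1$ for every $n\ge 2$ and $\phi(\tau)<1$ for $n\ge 3$. Assembling the pieces, for $n\ge 2$ the hypotheses of Theorem~\ref{thm:semilocal-convergence-Weierstrass} are satisfied, so conclusions (i)--(v) transfer verbatim; for $n\ge 3$ we have $\phi(E(x^0))\le\phi(\tau)<1$, so claim (vi) of that theorem applies and delivers the $n$ simple zeros, the quadratic convergence, and the mutually disjoint disks \eqref{disks-disjoint-Weierstrass} each containing exactly one zero.

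The main obstacle is precisely the coupled estimate in the second and third steps. Since $a=(n-1)^{1/q}$ and $b=2^{1/q}$ are tied together through $q$, crude relaxations such as $2^{1/q}\le 2$ or $(1+x/m)^m\le e^{x}$ discard too much and already fail at small and moderate degrees (for instance they overshoot $1$ near $n=3$), so they cannot be used in isolation. The delicate part is therefore to justify that the maximum over $1\le p\le\infty$ occurs at $p=\infty$ and then to establish the sharp one-variable inequality $g(n)\le 1$; the sharpness is unavoidable, as the equality $g(2)=1$ corresponds exactly to the limiting case $n=2$, $p=\infty$ for which the accompanying Example shows only linear convergence.
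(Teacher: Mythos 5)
The paper itself states this corollary without a written proof (it is presented as an improvement of Corollary~2 of \cite{Pro06b}), and your overall strategy is certainly the intended one: since $\phi$ in \eqref{eq:semilocal-convergence-Weierstrass-phi} is increasing on $[0,1/2^{1/q})$, everything reduces to the scalar inequality $\phi(\tau)\le 1$ for $\tau=1/(2(n-1)^{1/q}+2)$, with strict inequality for $n\ge 3$, after which conclusions (i)--(vi) of Theorem~\ref{thm:semilocal-convergence-Weierstrass} transfer exactly as you say. Your algebra is also correct: with $a=(n-1)^{1/q}$, $b=2^{1/q}$ one gets
\[
\phi(\tau)=\frac{2a(a+1)}{(2a+1)(2a+2-b)}\Bigl(1+\frac{a}{(n-1)(2a+2-b)}\Bigr)^{n-1},
\]
which at $p=\infty$ equals $g(n)=\frac{n}{2n-1}\bigl(1+\frac{1}{2(n-1)}\bigr)^{n-1}$, with $g(2)=1$. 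But there is a genuine gap at exactly the point you yourself flag as the crux: the claim that $\phi(\tau)$, as a function of $q\in[1,\infty]$, is maximized at $q=1$ is asserted, not proved, and no workable method for it is indicated. This is not a formality. The two factors are \emph{not} separately maximized at $q=1$ (for $n=10$ the first factor is about $0.5274$ at $q=1.1$ versus $0.5263$ at $q=1$), so any proof must control the product of the coupled quantities $a,b$; and, as you correctly observe, the obvious decouplings fail near the equality case $(n,p)=(2,\infty)$. As written, your argument establishes the corollary only for $p=\infty$ (and even there the strict decrease of $g$ is only sketched, though that part is routine: for $n\ge 3$ one needs no monotonicity, since $g(n)\le\frac{n}{2n-1}\,e^{1/2}\le\frac{3}{5}e^{1/2}<1$).

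The gap is fillable, and in fact the max-at-$q=1$ claim can be bypassed entirely. Treat $n=2$ exactly: there $a=1$ for every $q$, so $\phi(\tau)=\frac{4(5-b)}{3(4-b)^2}$, which is increasing in $b\in[1,2]$ and hence $\le 1$, with equality only at $b=2$. For $n\ge 3$ use $b\le\min\{2,a\}$ (note $b\le a$ precisely because $2\le n-1$), whence $2a+2-b\ge\max\{2a,\,a+2\}$. If $a\ge 2$, then
\[
\phi(\tau)\le\frac{a+1}{2a+1}\Bigl(1+\frac{1}{2(n-1)}\Bigr)^{n-1}\le\frac{3}{5}\,e^{1/2}<1,
\]
while if $1\le a\le 2$, then $\phi(\tau)\le\frac{2a(a+1)}{(2a+1)(a+2)}\,e^{a/(a+2)}$, and both factors are increasing in $a$, so the bound is at most its value $\frac{3}{5}e^{1/2}<1$ at $a=2$. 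This gives $\phi(\tau)<1$ for all $n\ge 3$ and $1\le p\le\infty$, which is exactly what your final paragraph needs (conclusions (i)--(v) for $n\ge2$; claim (vi), quadratic convergence, and the disjoint disks for $n\ge3$). So your skeleton and endpoint computations are right, and the sharpness observation about $n=2$, $p=\infty$ is apt, but the corollary is not proved until the $q$-dependence is actually controlled.
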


%
%

\end{document}